\author{Aijin Lin}
\title{$\textmd{The heat flow for K}$$\ddot{\textmd{a}}$$\textmd{hler fibrations}$}
\date{}
\begin{document}
\maketitle
\begin{abstract}
We establish global existence of smooth solutions to heat flow for Yang-Mills-Higgs functional on K$\ddot{\textmd{a}}$hler fibrations. As an application, we give a new proof of the key inequality for Mundet's Hitchin-Kobayashi correspondence theorem using the heat flow technique.
\end{abstract}
\vspace{12pt}
\let\thefootnote\relax \footnotetext {The author's name Aijin Lin was earlier known as Ling Lin. I am deeply sorry for any inconvenience that I have brought to you.}
\newtheorem{theorem}{Theorem}[section]
\newtheorem{lemma}[theorem]{Lemma}
\newtheorem{proposition}[theorem]{Proposition}
\newtheorem{corollary}[theorem]{Corollary}
\section{Introduction}
$\hspace{15pt}$ Assume that $X$ is a compact K$\ddot{\textmd{a}}$hler manifold, $G$ is a compact connected Lie group, $\mathfrak{g}=Lie(G)$, and $P\rightarrow X$ is a principle $G$-bundle on $X$. Now suppose that $(M, \omega, \mu)$ is a compact symplectic manifold which supports a Hamiltonian $G$-group action with a moment map $\mu: M\rightarrow \mathfrak{g}^{\ast}$, and fix on $\mathfrak{g}$ an invariant inner product $\langle\cdot,\cdot\rangle$. This allows to identify $\mathfrak{g}\simeq \mathfrak{g}^{\ast}$. Let $\mathcal{A}$ be the space of connections on $P$, $\mathcal{A}^{1,1}\subset \mathcal{A}$ be the space of connection whose curvature belongs to $\Omega^{1,1}(P\times_{Ad}\mathfrak{g})$. Let $\mathcal{F}=P\times_{G}M\rightarrow X$ be the associated fiber bundle (fibration) on $X$, $\mathcal{L}$ the space $\Gamma(\mathcal{F})$ of smooth sections of $\mathcal{F}$. \par

In 1999, Cieliebak-Gaio-Salamon [5] and Mundet [13] independently defined analogues of the Gromov-Witten invariants of symplectic manifolds with Hamiltonian group actions using the vortex equation
\begin{eqnarray}
\wedge F_{A}+\mu(u)=c,
\end{eqnarray}
where $F_{A}$ denotes the curvature of the connection $A$ on $P$, $\wedge$ the contraction with the K$\ddot{\textmd{a}}$hler form of $X$, $u\in\mathcal{L}$, and  $c\in \mathfrak{g}$ is a fixed central element.
They also introduced the Yang-Mills-Higgs functional (see section 2)
\begin{equation}
\mathcal{YMH}(A,u)=||F_{A}||_{L^{2}}^{2}+||d_{A}u||_{L^{2}}^{2}+||\mu(u)-c||_{L^{2}}^{2},
\end{equation}
and prove its some fundamental properties. When $X=\mathbb{R}^{2}$, the functional appeared in the Ginzburg-Landau theory in particle physics, which is used to model superconductivity. When $X=\mathbb{R}^{3}, G=SU(2)$ and $P=\mathbb{R}^{3}\times SU(2)$ is a trivial bundle, Taubes [18] studied the existence of critical points of Yang-Mills-Higgs functional. When $X=\Sigma$ is a compact Riemann surface and $M$ is a compact symplectic manifold, Song [17] proved the existence of critical points of Yang-Mills-Higgs functional above using Sacks-Uhlenbeck's perturbation method. \par
On the other hand, the most interesting property of the Yang-Mills-Higgs functional is that the vortex equation (1) is just the minimum equation of Yang-Mills-Higgs functional (2). And (1) has been studied in a large number of work, for example [3-5, 9, 13-15]. Especially, Bradlow [4] established the relation between the $\tau$-stability of holomorphic bundles with a global section and the existence of a hermitian metric satisfying the $\tau$-vortex equations by minimizing the so called Donaldson's functional. In [13, 14], Mundet proved the Hitchin-Kobayashi correspondence between the $c$-stability and existence of solution to the vortex equation (1). Hong [8] gave another proof of Bradlow's Hitchin-Kobayashi correspondence theorem by using heat flow approach, and his proof was considerably simpler than one in [4].\par
Motivated by Hong's work, we investigate the Yang-Mills-Higgs functional (2) using heat flow approach. Recently, Venugopalan [21] studied the heat flow of a Yang-Mills-type functional similar to (2) on the space of gauged holomorphic maps using another approach. \par Now let's describe the outline of this paper. First we will give the generalized covariant derivative $d_{A}$ (see section 2) introduced in [5]
\begin{eqnarray*} d_{A}u=du+L_{u}A, \forall u\in \mathcal{L}=\Gamma(\mathcal{F}), \end{eqnarray*}
and strictly define the Yang-Mills-Higgs functional
\begin{eqnarray*}
\mathcal{YMH}(A,u)=||F_{A}||_{L^{2}}^{2}+||d_{A}u||_{L^{2}}^{2}+||\mu(u)-c||_{L^{2}}^{2}.
\end{eqnarray*}
Then we adopt Urakawa's approach [20] and strictly deduce the Euler-Lagrange equations of Yang-Mills-Higgs functional as follows:
\begin{eqnarray}
L^{*}_{u}d_{A}u+D^{*}_{A}F_{A}&=0\\
\nabla^{*}_{A}d_{A}u+(d\mu)^{*}(\mu(u)-c)&=0
\end{eqnarray}

We can see that the Euler-Lagrange equations above are common generalization of harmonic maps and Yang-Mills connections.
Therefore, we are interested in studying the Euler-Lagrange equations and consider the following heat flow equations
\begin{eqnarray}
\frac{\partial A}{\partial t}&=&-L^{*}_{u}d_{A}u-D^{*}_{A}F_{A};\\
\frac{\partial u}{\partial t}&=&-\nabla^{*}_{A}d_{A}u-(d\mu)^{*}(\mu(u)-c).
\end{eqnarray}
And we can prove the global existence of (5), (6). Now we can state our main theorem as follows:
\begin{theorem} Assume that both the base manifold $X$ and the fiber $M$ of the fiber bundle $\mathcal{F}=P\times_{G}M\rightarrow X$ are compact K$\ddot{\textmd{a}}$hler manifolds (K$\ddot{\textmd{a}}$hler fibration), where $M$ supports a Hamiltonian $G$-group action, and $P$ is a holomorphic G-principal bundle over $X$. Let $H_{0}$ be a Hermitian metric on $P$, $A_{0}$ the canonical connection with respect to $H_{0}$, $u_{0}$ a holomorphic section of the fiber bundle $P\times_{G}M$, i.e. ${\bar \partial}_{A_{0}}u_{0}=0$.
Then Yang-Mills-Higgs heat flow
\begin{eqnarray*}
\frac{\partial A}{\partial t}&=&-L^{*}_{u}d_{A}u-D^{*}_{A}F_{A};\\
\frac{\partial u}{\partial t}&=&-\nabla^{*}_{A}d_{A}u-(d\mu)^{*}(\mu(u)-c),
\end{eqnarray*}
admits a unique global smooth solution $(A, u)$ in $X\times [0, +\infty)$ with initial value \begin{eqnarray} (A(\cdot, 0)=A_{0}, u(\cdot,0)=u_{0}). \end{eqnarray}
\end{theorem}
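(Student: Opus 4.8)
The plan is to follow the standard architecture for geometric heat flows: short-time existence of a smooth solution, a priori estimates uniform on every finite time interval, and then global existence by the open--closed (no blow-up) argument, with uniqueness established along the way. The one structural subtlety is the gauge invariance of the $A$-equation, which makes the system in its naive form only weakly parabolic; I would handle this as in Donaldson's and Simpson's treatment of the Hermitian--Einstein flow, exploiting the hypotheses that $P$ carries a fixed holomorphic structure and that $u_{0}$ is holomorphic. Concretely, one first checks that the flow (5)--(6) preserves the condition $\bar\partial_{A}u\equiv 0$ and keeps $(A,u)$ inside the complex gauge orbit of $(A_{0},u_{0})$ --- a computation using the Bianchi identity, the vanishing $F_{A}^{0,2}=0$ and the K\"ahler identities. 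Then $A(t)$ is always the Chern connection of a Hermitian metric $H(t)=H_{0}e^{s(t)}$ with $s(t)$ a self-adjoint section of $\mathrm{ad}\,P$, one has $d_{A}u=\partial_{A}u$, and the Euler--Lagrange system rewrites --- via the K\"ahler identities applied to $D_{A}^{*}F_{A}$ and $\nabla_{A}^{*}d_{A}u$ --- in Donaldson-heat-flow form, roughly $\partial_{t}s\sim-(\wedge F_{A}+\mu(u)-c)$ coupled to a parabolic equation for $u$ along the fibers.

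Short-time existence then reduces to standard parabolic theory: the reformulated system is strictly parabolic on the compact manifold $X$, its linearization at $(A_{0},u_{0})$ is elliptic of the correct sign, and the inverse function theorem in parabolic H\"older spaces --- equivalently a contraction-mapping argument --- produces a unique smooth solution on a maximal interval $[0,T_{\max})$ together with a blow-up alternative: if $T_{\max}<\infty$, then $\|F_{A}\|_{C^{0}}+\|d_{A}u\|_{C^{0}}+\|s\|_{C^{0}}\to\infty$ as $t\to T_{\max}$. Uniqueness on the whole existence interval follows either from this fixed-point argument or from a Gronwall estimate applied to the difference of two solutions.

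The analytic heart is the a priori estimate. First I would differentiate the functional along the flow and obtain the energy identity $\frac{d}{dt}\mathcal{YMH}(A(t),u(t))=-\|\partial_{t}A\|_{L^{2}}^{2}-\|\partial_{t}u\|_{L^{2}}^{2}\le 0$, which yields uniform $L^{2}$ bounds on $F_{A}$, $d_{A}u$ and $\mu(u)-c$, and $\int_{0}^{T}(\|\partial_{t}A\|_{L^{2}}^{2}+\|\partial_{t}u\|_{L^{2}}^{2})\,dt<\infty$ for every $T$. The decisive step is the $C^{0}$-estimate: a bound on $s(t)$ --- equivalently on the complex gauge transformation carrying $(A_{0},u_{0})$ to $(A(t),u(t))$ --- that is uniform on $[0,T]$. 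Here the compactness of the fiber $M$ is essential; it forces $u(t)$ to remain in a fixed compact subset of $\mathcal{F}$, so that $\mu$, $d\mu$ and the infinitesimal action $L_{u}$ together with its adjoint are automatically bounded along the flow, reducing everything to a curvature estimate on the base $X$. A Donaldson-type computation then gives a differential inequality of the form $(\partial_{t}-\Delta)|s|\le C(1+|s|)$, with the moment-map contribution absorbed using the boundedness just noted, and the maximum principle combined with the $L^{2}$/energy bounds produces the desired uniform $C^{0}$ bound on any finite interval.

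Once $|s|$ is bounded in $C^{0}$, the system is uniformly parabolic on $[0,T]$ with bounded coefficients, and a standard bootstrap --- differentiating the equations and applying interior parabolic Schauder and $L^{p}$ estimates, using the smoothness of the initial data at $t=0$ --- upgrades this to uniform $C^{\infty}$ bounds for $(A(t),u(t))$ on $X\times[0,T]$ for every finite $T$. These bounds contradict the blow-up alternative, so $T_{\max}=+\infty$ and the solution is global and smooth. I expect the main obstacle to be precisely this $C^{0}$-estimate: closing the Donaldson-type maximum-principle argument in the presence of the Higgs term $\mu(u)-c$ and of the coupling between the $A$- and $u$-equations requires careful use of the compactness of $M$ and of the preserved holomorphicity $\bar\partial_{A}u\equiv 0$, and establishing that this holomorphicity is indeed preserved by the flow is itself a delicate point that must be settled first.
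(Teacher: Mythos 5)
Your plan is essentially the paper's proof: the flow is reduced, via the complex gauge action and the generalized K\"ahler identities (using precisely the preserved conditions $\bar\partial_A u=0$ and $F_A\in\Omega^{1,1}$), to Donaldson's heat equation $\partial_t H=-2iH[\wedge F_H+\mu(u)-c]$ for Hermitian metrics $H=H_0e^{s}$ with $u$ slaved to the complex gauge orbit of $u_0$, short-time existence and uniqueness come from standard parabolic theory and the maximum principle for Donaldson's distance $\sigma(H,K)$, and global existence follows from a $C^0$ estimate plus bootstrap and continuation. The only difference worth noting is that the paper's decisive pointwise estimate is the maximum principle applied to $\hat e=|\wedge F_A+\mu(u)-c|^2$ (showing $(\partial_t+\Delta)\hat e\le 0$), from which the nondegenerate $C^0$ limit of $H(t)$ and the $C^{1,\alpha}$, $C^{2,\alpha}$ bounds are derived exactly as in Donaldson and Hong, rather than a differential inequality imposed directly on $|s|$.
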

For the proof of theorem, we follow Donaldson's approach [6, 8] to consider a flow of gauge transformation which is equivalent to (5),(6):
 \begin{eqnarray*}
\frac{\partial h}{\partial t}=-\triangle_{A_{0}}h-i[h\hat{F}_{A_{0}}+\hat{F}_{A_{0}}h+2(\mu(u_{0})-c)h]+2i\wedge({\bar \partial}_{A_{0}}h h^{-1}{\partial}_{A_{0}}h)\\ \end{eqnarray*}

with $h(0)=I$. After that, we introduce the generalized K$\ddot{\textmd{a}}$hler identities and then prove some fundamental lemmas. Then modifying the idea of Donaldson, we obtain a local and global existence of the unique solution to (7) with initial value $h(0)=I$.\par
Next we recall the definition of $c$-stability of [14]. Then by studying the limiting behavior of the heat flow as $t\rightarrow\infty$, we can prove the key inequality [14, lemma 6.1] which is critical to the proof of the sufficiency part of Hitchin-Kobayashi correspondence theorem as follows:
\begin{theorem}([14]) Let $(A, u)\in \mathcal{A}^{1,1}\times \mathcal{L}$ be a simple pair, and assume that $(A, u)$ is c-stable, then there exists positive constants $C_{1}, C_{2}$ such that for any $s\in {Met}^{p}_{2,B}$, one has
\begin{equation}
sup|s|\leq C_{1}\Psi^{c}(e^{s})+C_{2},
\end{equation}
where the definitions of ${Met}^{p}_{2,B}$ and $\Psi^{c}$ can be seen in section 4.
\end{theorem}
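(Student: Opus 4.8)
The plan is to argue by contradiction in the spirit of Uhlenbeck--Yau and Simpson, using the heat flow of Theorem 1.1 to supply the global solutions, the monotone functional, and the parabolic smoothing that make the limiting step work. Suppose the inequality $\sup_X|s| \le C_1\Psi^c(e^s) + C_2$ fails for every pair of constants; then there is a sequence $s_j \in \mathrm{Met}^p_{2,B}$ with $l_j := \sup_X|s_j| \to \infty$ and $\Psi^c(e^{s_j})/l_j \to 0$. Put $v_j := s_j/l_j$, a self-adjoint endomorphism with $\sup_X|v_j| = 1$. The aim is to extract from the $v_j$ a nonzero limit $v_\infty$ whose spectral filtration is a $c$-destabilizing sub-object of $(A,u)$, contradicting $c$-stability and thereby forcing the inequality.

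The central ingredient is the integral expansion of $\Psi^c$ along the orbit, read off from the definition in Section 4 and the generalized K\"ahler identities: for self-adjoint $s$,
\[
\Psi^c(e^{s}) \;=\; \int_X \big\langle \wedge F_A + \mu(u) - c,\; s \big\rangle \;+\; \big\| \Phi(s)^{1/2}\,\bar{\partial}_A s \big\|_{L^2}^{2} \;+\; \mathcal{R}(s),
\]
where $\Phi(\lambda,\nu) = \big(e^{\nu-\lambda} - (\nu-\lambda) - 1\big)/(\nu-\lambda)^2 > 0$ acts blockwise on the eigenspaces of $s$, and $\mathcal{R}(s) \ge 0$ collects the contribution of the $\|d_A u\|_{L^2}^{2}$ term to the second variation, its nonnegativity being part of the convexity of $\Psi^c$ established in Section 4. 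Taking $s = s_j$, dividing by $l_j$, and using $\big|\int_X\langle \wedge F_A + \mu(u) - c,\, s_j\rangle\big| \le C\,l_j$, we see that $\int_X\langle l_j\,\Phi(l_j v_j)\,\bar{\partial}_A v_j,\, \bar{\partial}_A v_j\rangle$ stays bounded. Since $t \mapsto t\,\Phi(t\lambda, t\nu)$ is nondecreasing, is bounded below by a positive constant on the eigenvalue range $[-1,1]$ of the $v_j$, and tends to $(\lambda-\nu)^{-1}$ when $\lambda>\nu$ and to $+\infty$ when $\lambda\le\nu$, this forces a uniform $L^2_1$ bound on the $v_j$; a further estimate --- elliptic, or obtained by running the heat flow of Theorem 1.1 from $e^{s_j}$ for a short time to gain uniform interior control --- then yields enough compactness that, after passing to a subsequence, $v_j \to v_\infty$ in $C^0$, with $v_\infty$ self-adjoint and $\sup_X|v_\infty| = 1$. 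In particular $v_\infty \ne 0$, and the trace normalization built into $\mathrm{Met}^p_{2,B}$ together with simplicity of $(A,u)$ keeps $v_\infty$ non-scalar, so that its spectral decomposition has $r \ge 2$ distinct eigenvalues.

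Dividing the expansion by $l_j$ and letting $j \to \infty$, the pointwise limits of $t\,\Phi(t\,\cdot\,)$ recorded above and lower semicontinuity give that for every smooth $\phi$ with $\phi(\lambda,\nu) \le (\lambda-\nu)^{-1}$ whenever $\lambda > \nu$,
\[
0 \;\ge\; \int_X \big\langle \wedge F_A + \mu(u) - c,\; v_\infty \big\rangle \;+\; \int_X \big\langle \phi(v_\infty)\,\bar{\partial}_A v_\infty,\; \bar{\partial}_A v_\infty \big\rangle .
\]
Now comes the delicate step: I would invoke the Uhlenbeck--Yau regularity theorem --- or, exploiting the smoothing of the heat flow, pass instead to the long-time limit of the heat flows of Theorem 1.1 started at $e^{s_j}$ --- to conclude that the eigenvalues $\lambda_1 < \cdots < \lambda_r$ of $v_\infty$ are constant almost everywhere and that the partial sums $\Pi_\alpha := \pi_1 + \cdots + \pi_\alpha$ of the spectral projections are weakly holomorphic and compatible with the section $u$, hence define genuine $c$-sub-objects of $(A,u)$ in Mundet's sense. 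Writing $v_\infty = \lambda_r\,\mathrm{Id} + \sum_{\alpha=1}^{r-1}(\lambda_\alpha - \lambda_{\alpha+1})\,\Pi_\alpha$, letting $\phi \nearrow (\lambda-\nu)^{-1}$, and evaluating $\int_X\langle \wedge F_A + \mu(u) - c,\, \Pi_\alpha\rangle$ by the Chern--Weil formula for the $c$-degree $\deg^{c}(\Pi_\alpha)$, the displayed inequality telescopes into $\sum_{\alpha}(\lambda_{\alpha+1} - \lambda_\alpha)\,\deg^{c}(\Pi_\alpha) \le 0$, while the trace condition pins down the contribution of the total object; combined, these show that some $\Pi_\alpha$ violates the $c$-slope inequality defining $c$-stability --- the desired contradiction.

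The main obstacle is precisely this regularity step: promoting the weak $C^0$ limit $v_\infty$, which a priori only satisfies a differential inequality, to an honest $c$-sub-object --- the K\"ahler-fibration analogue of the Uhlenbeck--Yau removal of singularities for weakly holomorphic subbundles. One must produce simultaneously a coherent sub-sheaf (or parabolic reduction) on the bundle side and a compatible reduction of the section $u$, so that Mundet's notion of $c$-stability genuinely applies, and one must check that the Higgs term $\mathcal{R}(s)$ and its limit cooperate with --- rather than obstruct --- the semicontinuity used above. The remaining points, namely the integral expansion of $\Psi^c$ with the correct nonnegative remainder (the fibration analogue of Simpson's formula) and the final bookkeeping with the weights $\lambda_\alpha$, should follow the classical pattern once the generalized K\"ahler identities and the lemmas of Sections 2--3 are in hand.
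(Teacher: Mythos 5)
Your overall strategy---argue by contradiction, normalize the sequence $s_j$, extract a weak limit, and promote it via Uhlenbeck--Yau/Simpson regularity to a destabilizing reduction contradicting $c$-stability---is the same as the paper's, and the final bookkeeping (you telescope degrees of the spectral projections; the paper instead invokes Mundet's identity $\frac{d}{dt}\Psi^{c}(e^{ts})=\lambda_{t}((A,u);-is)$ together with [14, Lemma 4.3] identifying the $c$-total degree with the maximal weight $\lambda((A,u);-iu_{\infty})$) is a cosmetic difference. But there is a concrete gap at the normalization step. You set $l_{j}=\sup_{X}|s_{j}|$ and need the limit $v_{\infty}$ of $v_{j}=s_{j}/l_{j}$ to be nonzero; the estimate you actually obtain is a uniform $L^{2}_{1}$ bound, which yields only weak $L^{2}_{1}$ (hence strong $L^{2}$) convergence, and under sup-normalization the limit can perfectly well vanish if the $v_{j}$ concentrate on small sets. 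Your proposed fix---``$v_{j}\to v_{\infty}$ in $C^{0}$'' via an unspecified elliptic estimate or by running the heat flow from $e^{s_{j}}$ for a short time---is not justified: $C^{0}$ compactness does not follow from an $L^{2}_{1}$ bound in complex dimension $\geq 1$, and evolving $e^{s_{j}}$ by the flow changes the object whose limit you need.

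The paper closes exactly this hole with the inequality $\sup_{X}|s|\leq C_{B}\|s\|_{L^{1}}$ for $s\in \mathrm{Met}^{p}_{2,B}$ (Lemma 4.3, inequality (37), proved from the heat flow following Hong and Simpson), which is one of the two pillars of its proof and is entirely absent from your proposal. It serves double duty: it reduces the theorem to the $L^{1}$ estimate $\|s\|_{L^{1}}\leq C_{1}'\Psi^{c}(e^{s})+C_{2}'$, and, after normalizing by $l_{j}=\|s_{j}\|_{L^{1}}$ instead of the sup-norm, it gives $\sup|u_{j}|\leq C$ while the compact embedding $L^{2}_{1}\hookrightarrow L^{1}$ forces $\|u_{\infty}\|_{L^{1}}=1$, so the limit is automatically nonzero. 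Without this lemma (or an equivalent mean-value-type estimate) your argument does not produce a nontrivial destabilizing object, so the contradiction is not reached. The remaining issues you flag---the regularity of $v_{\infty}$ and the construction of the holomorphic reduction $\sigma$ and antidominant character $\chi$---are handled in the paper, as in yours, by citation to [16, Lemma 5.5] and [14], so deferring them is acceptable.
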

This paper is organized as follows. In section 2, we define Yang-Mills-Higgs functional and strictly deduce its Euler-Lagrange equations. In section 3, we study the heat flow of Yang-Mills-Higgs functional and prove Theorem 1.1. As an application, we give another proof of Theorem 1.2 using the heat flow technique.
\section{Preliminaries}
\subsection{ Yang-Mills-Higgs functional}
$\hspace{15pt}$Suppose that $X$ is a compact K$\ddot{\textmd{a}}$hler manifold, G is a compact connected Lie group, and $P\rightarrow X$ is a principle $G$-bundle on $X$. Let $(M, \omega)$ be a compact symplectic manifold, and let $G$ acts on $M$ by symplectomorphisms. Let $\mathfrak{g}$=Lie(G) denote the Lie algebra of $G$. For every $\xi\in \mathfrak{g}$, denote by $X_{\xi}: M\rightarrow TM$ the vector field whose flow is given by the action of the 1-parameter subgroup generated by $\xi$. Suppose that the Lie algebra $\mathfrak{g}$ carries an invariant inner product $\langle\cdot,\cdot\rangle$. The action of $G$ is called $Hamiltonian$ if there exists an equivariant map $\mu: M\rightarrow \mathfrak{g}$ such that, for every $\xi\in\mathfrak{g}$,
\begin{eqnarray*} d\langle\mu, \xi\rangle=l(X_{\xi})\omega. \end{eqnarray*}
This means that $X_{\xi}$ is the Hamiltonian vector field of the function $\langle\mu,\xi\rangle$. The map $\mu$ is called a $moment$ $map$.\par
Now suppose that $(M, \omega, \mu)$ is a symplectic manifold with a Hamiltonian group $G$-action. Denote by $\mathcal{J}(M, \omega, \mu)$ the space of all almost complex structures $J$ on $TM$ which are invariant under the $G$-action and compatible with the symplectic structure $\omega$; that is , $\omega$ is $J$-invariant, and $g(\cdot,\cdot)=\omega(\cdot,J\cdot)$ is a Riemannian metric on $M$. It follows from [12, Prop. 2.50] that the space $\mathcal{J}(M, \omega, \mu)$ is nonempty and contractible.
Define an operator $L: M\rightarrow C^{\infty}(\mathfrak{g}, TM)$ as $L_{x}=L(x): \mathfrak{g}\rightarrow T_{x}M, \forall x\in M$, where $L_{x}\xi:=X_{\xi}(x), \forall \xi\in \mathfrak{g}$.
\begin{lemma}Suppose that $(M, \omega, \mu)$ is a symplectic manifold with a Hamiltonian group $G$-action.
Take a $\omega$-compatible almost complex structure $J$ on $M$, we have identities
\begin{eqnarray*} L_{x}=-J(d\mu(x))^{*}, L_{x}^{*}=d\mu(x)J, \end{eqnarray*}
where $L_{x}^{*}$ is the adjoint operator of $L_{x}$ with respect to the invariant inner product $\langle\cdot,\cdot\rangle$ and the Riemannian metric $g(\cdot,\cdot)=\omega(\cdot,J\cdot)$.
\end{lemma}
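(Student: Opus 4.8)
The plan is to establish the second identity $L_{x}^{*}=d\mu(x)J$ first, by unwinding the defining pairings on arbitrary test vectors, and then to deduce the first identity $L_{x}=-J(d\mu(x))^{*}$ formally by taking adjoints. Fix $x\in M$ and abbreviate $\mu'=d\mu(x)\colon T_{x}M\to\mathfrak{g}$ and $X_{\xi}=X_{\xi}(x)\in T_{x}M$. At the outset I would record the elementary consequences of $\omega$-compatibility of $J$: since $g(\cdot,\cdot)=\omega(\cdot,J\cdot)$ and $\omega$ is $J$-invariant, one has $g(Ja,b)=\omega(a,b)=-g(a,Jb)$ for all $a,b\in T_{x}M$; in particular $J$ is $g$-skew-adjoint, $J^{*}=-J$, and $J^{2}=-\mathrm{id}$.

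For the second identity I would evaluate $\langle L_{x}^{*}v,\xi\rangle$ for arbitrary $v\in T_{x}M$ and $\xi\in\mathfrak{g}$. By the definition of $L_{x}$ and of the fiberwise adjoint,
\[
\langle L_{x}^{*}v,\xi\rangle=g(v,L_{x}\xi)=g(v,X_{\xi})=g(X_{\xi},v)=\omega(X_{\xi},Jv),
\]
where I used the symmetry of $g$ and the compatibility relation $g(a,b)=\omega(a,Jb)$. This is exactly where the Hamiltonian hypothesis enters: the moment map condition $d\langle\mu,\xi\rangle=\iota(X_{\xi})\omega$ gives $\omega(X_{\xi},Jv)=d\langle\mu,\xi\rangle(Jv)=\langle\mu'(Jv),\xi\rangle$. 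Hence $\langle L_{x}^{*}v,\xi\rangle=\langle\mu'(Jv),\xi\rangle$ for every $\xi$, and since $\langle\cdot,\cdot\rangle$ is a (nondegenerate) inner product on $\mathfrak{g}$ this forces $L_{x}^{*}v=\mu'(Jv)$, i.e.\ $L_{x}^{*}=d\mu(x)J$.

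The first identity then follows with no further work: applying the adjoint to $L_{x}^{*}=d\mu(x)\circ J$ and using $(AB)^{*}=B^{*}A^{*}$ together with $J^{*}=-J$ yields $L_{x}=(L_{x}^{*})^{*}=J^{*}(d\mu(x))^{*}=-J(d\mu(x))^{*}$ (equivalently, $(d\mu(x))^{*}=JL_{x}$ after composing with $J$ and using $J^{2}=-\mathrm{id}$). I do not expect any genuine obstacle here — the argument is pure bookkeeping. The only points needing care are keeping straight which inner product each adjoint is taken with respect to (the fixed invariant product on $\mathfrak{g}$ versus the Riemannian metric $g$ on $T_{x}M$) and tracking the signs coming from $J^{2}=-1$, the $J$-invariance of $\omega$, and the sign convention in $\iota(X_{\xi})\omega=d\langle\mu,\xi\rangle$. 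It is worth noting that the $G$-equivariance of $\mu$ is not used: only the pointwise differential relation enters.
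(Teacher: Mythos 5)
Your proof is correct and uses essentially the same ingredients as the paper's: the moment map identity $\iota(X_\xi)\omega=d\langle\mu,\xi\rangle$, the compatibility relation $g(\cdot,\cdot)=\omega(\cdot,J\cdot)$, and passage to the adjoint using $J^{*}=-J$. The only difference is cosmetic — you derive $L_{x}^{*}=d\mu(x)J$ directly and obtain $L_{x}=-J(d\mu(x))^{*}$ by taking adjoints, whereas the paper does it in the opposite order — so the two arguments are interchangeable.
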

\begin{proof} First by the definition of moment map and $L_{x}$, we have
\begin{eqnarray} d\langle\mu(x),\xi\rangle=l(X_{\xi})\omega(x)=\omega(L_{x}\xi,\cdot), \forall\xi\in\mathfrak{g}. \end{eqnarray}

By the definition of the Riemannian metric $g(\cdot,\cdot)$, we have
\begin{eqnarray} \omega(L_{x}\xi,\eta)=\omega(L_{x}\xi,-J(J\eta))=g(L_{x}\xi,-J\eta)=-g(J\eta,L_{x}\xi), \forall \eta\in T_{x}M. \end{eqnarray}
At the same time, $ \forall \eta\in T_{x}M $ we have
\begin{eqnarray} d\langle\mu(x),\xi\rangle(\eta)=g(d\mu(x)\eta,\xi)=g(\eta,(d\mu(x))^{*}\xi)=g(J\eta,J(d\mu(x))^{*}\xi). \end{eqnarray}
By (9),(10),(11), we obtain $L_{x}=-J(d\mu(x))^{*}$. Note that $(L_{x}^{*})^{*}=L_{x}, J^{*}=-J$, so $L_{x}^{*}=d\mu(x)J $. \end{proof}

Let $\mathcal{A}$ be the space of connections on $P$, $\mathcal{A}^{1,1}\subset \mathcal{A}$ be the space of connections whose curvature belong to $\Omega^{1,1}(P\times_{Ad}\mathfrak{g})$. Let $\mathcal{F}=P\times_{G}M\rightarrow X$ be the associated bundle on $X$ with fiber $M$, $\mathcal{L}$ be the space $\Gamma(\mathcal{F})$ of smooth sections of $\mathcal{F}$. \par
In order to define Yang-Mills-Higgs functional for pairs $\mathcal{A}^{1,1}\times \mathcal{L}$, it will be necessary to extend the definition of covariant derivative on vector bundles to general fiber bundles. \par
According to [2], there is a representation of the sections of the associated fibration $\mathcal{F}=P\times_{G}M\rightarrow X$ as functions on the corresponding principle bundle $\pi: P\rightarrow X$. This representation is extremely useful in doing calculations in a coordinate free way.
\begin{theorem}([2])
Let $C_{G}^{\infty}(P,M)$ denote the space of equivariant maps from $P$ to $M$, that is, those maps $u: P\rightarrow M$ that satisfy $u(p\cdot g)=g^{-1}u(p)$. There is a natural isomorphism between $\mathcal{L}=\Gamma(\mathcal{F})$ and $C_{G}^{\infty}(P,M)$, given by sending $u\in C_{G}^{\infty}(P,M)$ to $\tilde{u}$ defined by
\begin{eqnarray*} \tilde{u}(x)=[p,u(p)],  \end{eqnarray*}
here $x\in X$, $p$ is any element of $\pi^{-1}(x)$, and $[p, u(p)]$ is element of $\mathcal{F}=P\times_{G}M$ corresponding to $(p, u(p))\in P\times M$.
\end{theorem}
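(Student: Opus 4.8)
The plan is to write down the inverse of the assignment $u\mapsto\tilde u$ explicitly and to check that both directions take values in the right spaces and are smooth; everything else is formal. Recall that $\mathcal{F}=P\times_G M$ is by construction the quotient of $P\times M$ by the right $G$-action $(p,m)\cdot g=(pg,g^{-1}m)$, that the quotient map $q:P\times M\to\mathcal{F}$ is a smooth submersion (the action is free and $P\to X$ is locally trivial), and that $\mathrm{pr}([p,m])=\pi(p)$. For fixed $p\in\pi^{-1}(x)$ the map $\iota_p:M\to\mathcal{F}_x$, $\iota_p(m)=[p,m]$, is a diffeomorphism onto the fibre; write $\phi_p:=\iota_p^{-1}$. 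The defining relation of the quotient yields the identity $\iota_{pg}(m)=\iota_p(gm)$, i.e. $\phi_{pg}=g^{-1}\circ\phi_p$, which handles all the bookkeeping.

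First I would verify that $u\mapsto\tilde u$ lands in $\Gamma(\mathcal{F})$. If $p'=pg\in\pi^{-1}(x)$, then $[p',u(p')]=[pg,g^{-1}u(p)]=[p,u(p)]$ by equivariance of $u$, so $\tilde u(x)$ is independent of the chosen $p$; and $\mathrm{pr}(\tilde u(x))=\pi(p)=x$, so $\tilde u$ is a section. Smoothness is checked over an open set $U$ admitting a smooth local section $\sigma:U\to P$ of the principal bundle: there $\tilde u|_U=q\circ(\sigma,u\circ\sigma)$ is a composition of smooth maps.

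Next I would construct the candidate inverse. To $s\in\Gamma(\mathcal{F})$ associate $u_s:P\to M$, $u_s(p):=\phi_p\bigl(s(\pi(p))\bigr)$. Equivariance is immediate from $\phi_{pg}=g^{-1}\circ\phi_p$, since $u_s(pg)=\phi_{pg}(s(x))=g^{-1}u_s(p)$. For smoothness, over $U$ as above every $p\in\pi^{-1}(U)$ factors uniquely as $p=\sigma(\pi(p))\cdot\tau(p)$ with $\tau:\pi^{-1}(U)\to G$ smooth; in the trivialization $\mathcal{F}|_U\cong U\times M$ induced by $\sigma$ the section $s$ is represented by a smooth map $f:U\to M$, and then $u_s(p)=\tau(p)^{-1}f(\pi(p))$ is manifestly smooth. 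Finally the two assignments are mutually inverse by a one-line computation: $u_{\tilde u}(p)=\phi_p([p,u(p)])=u(p)$, and $\widetilde{u_s}(x)=[p,\phi_p(s(x))]=s(x)$.

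I do not expect any genuine obstacle: this is the standard dictionary identifying sections of an associated fibre bundle with equivariant maps out of the principal bundle, and the word \emph{isomorphism} is meant only as a bijection of section spaces intertwining the fibrewise structure (the fibre $M$ being merely a smooth manifold, not a vector space). The single point needing care is the smoothness in both directions, which is precisely why the argument should be organized around a choice of local section $\sigma$ of $P$ together with the induced local trivialization of $\mathcal{F}$.
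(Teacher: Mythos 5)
Your proof is correct and complete. The paper itself offers no argument for this statement at all --- it is quoted verbatim from reference [2] (Berline--Getzler--Vergne) and used as a black box --- so there is nothing to compare against; your write-up simply supplies the standard proof that the paper omits. The two directions are handled properly: well-definedness of $\tilde u(x)=[p,u(p)]$ follows from the equivariance $u(pg)=g^{-1}u(p)$ matched against the quotient relation $(pg,g^{-1}m)\sim(p,m)$, the explicit inverse $s\mapsto u_s$ with $u_s(p)=\phi_p(s(\pi(p)))$ is the right construction, and you correctly isolate smoothness in both directions as the only point requiring an argument, resolving it with a local section $\sigma$ of $P$ and the induced trivialization of $\mathcal{F}$. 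No gaps.
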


By Theorem 2.2, from now on we can identify $\mathcal{L}$ as $C_{G}^{\infty}(P,M)$. We define the covariant derivative on fiber bundles $\mathcal{F}=P\times_{G}M\rightarrow X$( see [5] for more details).
$\forall u\in \mathcal{L}=C_{G}^{\infty}(P,M)$. The connection $A$ on $P$ determines a connection on the fiber bundle $\mathcal{F}=P\times_{G}M\rightarrow X$. More precisely, the tangent space of $\mathcal{F}$ at $[p,x]$ is the quotient
\begin{eqnarray*} T_{[p,x]}\mathcal{F}=\frac{T_{p}P\times T_{x}M}{\{(p\xi, -X_{\xi}(x))|\xi\in\mathfrak{ g}\}}, \end{eqnarray*}
the vertical space consists of equivalence classes of the form $[0,w]$ with $w\in T_{x}M$, and the horizontal space consists of those equivalence classes $[v,w]$ where $v\in T_{p}P$ and $w\in T_{x}M$ satisfy $w+X_{A_{p}(v)}(x)=0$. The $covariant$ $derivative$ of a section $u: P\rightarrow M$ with respect to the connection $A$ is the form $d_{A}u: TP\rightarrow u^{*}TM$ given by
\begin{eqnarray*} d_{A}u(p)v=du(p)v+X_{A_{p}(v)}(u(p)). \end{eqnarray*}
It is easy to check that this 1-form is actually $G$-equivariant, and it satisfies $d_{A}u(p)p\xi=0, \forall \xi\in \mathfrak{g}$. Hence it actually defines a covariant derivative on the fibration $\mathcal{F}=P\times_{G}M\rightarrow X$. Recall the definition of the operator $L$, then we can rewrite the covariant derivative as
\begin{eqnarray} d_{A}u=du+L_{u}A, \forall u\in \mathcal{L}=\Gamma(\mathcal{F})=C_{G}^{\infty}(P,M). \end{eqnarray}
We can rewrite the identities of lemma 2.1 as follows
\begin{lemma} $\forall u\in C_{G}^{\infty}(P,M)$, we have identities
\begin{eqnarray} L_{u}=-J(d\mu(u))^{*}, L_{u}^{*}=d\mu(u)J. \end{eqnarray}
\end{lemma}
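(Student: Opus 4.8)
The plan is to deduce Lemma 2.4 directly from Lemma 2.1 by working pointwise on $P$. Recall that under the identification of $\mathcal{L}$ with $C_G^\infty(P,M)$, for $u\in C_G^\infty(P,M)$ and $p\in P$ the operator $L_u$ evaluated at $p$ is, by the very definition of the operator $L$ and of the covariant derivative $d_Au=du+L_uA$, nothing but the linear map $L_{u(p)}\colon\mathfrak{g}\to T_{u(p)}M$, $\xi\mapsto X_\xi(u(p))$. Likewise $d\mu(u)$ at $p$ means the differential $d\mu$ evaluated at the point $u(p)\in M$, and $J$ is the chosen $\omega$-compatible, $G$-invariant almost complex structure evaluated at $u(p)$. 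Thus all three objects appearing in the asserted identities are, at each $p\in P$, the corresponding objects at the point $x=u(p)\in M$ that occur in Lemma 2.1.

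First I would fix $p\in P$ and set $x=u(p)$. Applying Lemma 2.1 at the point $x$ gives $L_x=-J(d\mu(x))^{*}$ and $L_x^{*}=d\mu(x)J$, where the adjoints are taken with respect to the invariant inner product $\langle\cdot,\cdot\rangle$ on $\mathfrak{g}$ and the Riemannian metric $g(\cdot,\cdot)=\omega(\cdot,J\cdot)$ on $M$ — exactly the fiberwise inner products used to define $L_u^{*}$. Since $p$ was arbitrary, these identities hold at every point of $P$, which is precisely the content of $L_u=-J(d\mu(u))^{*}$ and $L_u^{*}=d\mu(u)J$.

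The only point that calls for a remark, rather than a genuine obstacle, is consistency with the $G$-action: because $\mu$ is equivariant, $J\in\mathcal{J}(M,\omega,\mu)$ is $G$-invariant, and the fundamental vector fields $X_\xi$ transform equivariantly under $G$, both sides of each identity transform in the same way along the equivariant map $u$, so they descend to well-defined bundle maps and the pointwise equality is a genuine equality of operators on $C_G^\infty(P,M)$. Hence the lemma follows with essentially no further computation; it is just the reformulation of Lemma 2.1 in the language of Theorem 2.2.
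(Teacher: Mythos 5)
Your proof is correct and coincides with the paper's own treatment: the paper offers no separate argument for this lemma, introducing it merely as a rewriting of Lemma 2.1 along the equivariant map $u$, which is exactly the pointwise reduction $x=u(p)$ you carry out. The additional remark on $G$-equivariance is a harmless (and reasonable) elaboration, not a departure.
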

Now fix a central element $c\in \mathfrak{g}$. The $Yang$-$Mills$-$Higgs$ functional $\mathcal{YMH}: \mathcal{A}^{1,1}\times \mathcal{L}\rightarrow \mathbb{R}$ is defined as
\begin{eqnarray*}\mathcal{YMH}(A,u)=||F_{A}||_{L^{2}}^{2}+||d_{A}u||_{L^{2}}^{2}+||\mu(u)-c||_{L^{2}}^{2}. \end{eqnarray*}
We can see that the functional is composed of the Yang-Mills functional $E_{1}=||F_{A}||_{L^{2}}^{2}$, the energy functional $E_{2}=||d_{A}u||_{L^{2}}^{2}$ and the Higgs functional $E_{3}=||\mu(u)-c||_{L^{2}}^{2}$.
Next we compute its critical points equations.
\subsection{The Euler-Lagrange equations}
$\hspace{15pt}$Adopting [20]'s method, first fix the connection $A$ and take a variation $u_{t}$ of $u$, i.e., $u_{t}\in C_{G}^{\infty}(P,M), u_{0}=u$. So the Yang-Mills functional $E_{1}(A, u_{t})$ is fixed, and we only need calculate $E_{2}$ and $E_{3}$.
Choose a metric $h$ on $P$, $g=g(\cdot,\cdot)=\omega(\cdot,J\cdot)$ on $M$ and their corresponding Levi-civita connections $\nabla,\bar{\nabla}$. Further $\nabla,\bar{\nabla}$ induce connections $\nabla^{*}$ on $T^{*}P$, $\bar{\nabla}$ on $u^{*}TM\rightarrow P$, $\nabla^{*}\otimes \bar{\nabla}$ on $T^{*}P\otimes u^{*}TM$. Let $U\subset P, (x^{i})\in U$, $u(U)\subset V\subset M, (y^{\alpha})\in V$ be local coordinates. In the local coordinates, we can write
\begin{eqnarray*} d_{A}(u)=u^{i}_{\beta}dx^{i}\otimes (\frac{\partial}{\partial y^{\beta}}\cdot u), \end{eqnarray*}
where \begin{eqnarray*}
u^{i}_{\beta}&=&d_{A}u(\frac{\partial}{\partial x^{i}}\otimes dy^{\beta})\\
&=&(du+X_{A}u)(\frac{\partial}{\partial x^{i}}\otimes dy^{\beta})\\
&=&\frac{\partial u^{\beta}}{\partial x^{i}}+X_{A(\frac{\partial}{\partial x^{i}})}u^{\beta}\\
&=&\frac{\partial u^{\beta}}{\partial x^{i}}+X_{A_{i}}u^{\beta},
\end{eqnarray*}
and
\begin{eqnarray*}
<dx^{i}\otimes \frac{\partial}{\partial y^{\alpha}}, dx^{j}\otimes \frac{\partial}{\partial y^{\beta}}>&=&h^{ij}g_{\alpha\beta},
\end{eqnarray*}
so
\begin{eqnarray*}
<d_{A}u, d_{A}u>&=&<(\frac{\partial u^{\beta}}{\partial x^{i}}+X_{A_{i}}u^{\beta})dx^{i}\otimes \frac{\partial}{\partial y^{\beta}},(\frac{\partial u^{\alpha}}{\partial x^{j}}+X_{A_{j}}u^{\alpha})dx^{j}\otimes \frac{\partial}{\partial y^{\alpha}}>\\
&=&h^{ij}g_{\alpha\beta}(\frac{\partial u^{\beta}}{\partial x^{i}}+X_{A_{i}}u^{\beta})(\frac{\partial u^{\alpha}}{\partial x^{j}}+X_{A_{j}}u^{\alpha}).
\end{eqnarray*}
Therefore we have \begin{eqnarray*}
E_{2}(A, u_{t})=\int_{X}<d_{A}u_{t},d_{A}u_{t}>=\int_{X}h^{ij}g_{\alpha\beta}(u_{t})(\frac{\partial u_{t}^{\beta}}{\partial x^{i}}+X_{A_{i}}u_{t}^{\beta})(\frac{\partial u_{t}^{\alpha}}{\partial x^{j}}+X_{A_{j}}u_{t}^{\alpha}). \end{eqnarray*}
By compatibility between metrics and connections above,
\begin{eqnarray*} \nabla_{t}(h^{ij}g_{\alpha\beta})=0, \nabla_{k}(h^{ij}g_{\alpha\beta})=0. \end{eqnarray*}
By symmetry, \begin{eqnarray*}
\frac{d}{dt}|_{t=0}E_{2}(A, u_{t})=2\int_{X} h^{ij}g_{\alpha\beta}(u_{t})\nabla_{t}|_{t=0}(\frac{\partial u_{t}^{\beta}}{\partial x^{i}}+X_{A_{i}}u_{t}^{\beta})(\frac{\partial u_{t}^{\alpha}}{\partial x^{j}}+X_{A_{j}}u_{t}^{\alpha}). \end{eqnarray*}
Now we need compute $\nabla_{t}|_{t=0}(\frac{\partial u_{t}^{\beta}}{\partial x^{i}}+X_{A_{i}}u_{t}^{\beta})$.
Since $\nabla_{t}(\frac{\partial u_{t}^{\beta}}{\partial x^{i}})=\nabla_{i}(\frac{\partial u^{\beta}_{t}}{\partial t})$, we only need compute $\nabla_{t}(X_{A_{i}}(u^{\beta}_{t}))$.
Using the equivalent definition of connection by parallel translation, we get \begin{eqnarray*}
\nabla_{t}(X_{A_{i}}(u^{\beta}_{t}))=\bar\nabla_{V^{\beta}}(X_{A_{i}}). \end{eqnarray*}
Let $V^{\beta}=\frac{\partial u^{\beta}_{t}}{\partial t}|_{t=0}$, then
\begin{eqnarray*}
\frac{d}{dt}|_{t=0}E_{2}(A, u_{t})
&=&2\int_{X} h^{ij}g_{\alpha\beta}(u)(\nabla_{i}V^{\beta}+\bar\nabla_{V}(A_{i})(\frac{\partial}{\partial y^{\beta}}))(\frac{\partial u^{\alpha}}{x^{j}}+X_{A_{j}}(u^{\alpha}))\\
&=&2\int_{X}<\nabla V, d_{A}u>+<\nabla_{V}A, d_{A}u>\\
&=&2\int_{X}<\nabla V+\nabla_{V}A, d_{A}u>\\
&=&2\int_{X}<\nabla_{A}V, d_{A}u>,\\
\end{eqnarray*}
where $A\in \Omega^{1}(P, \mathfrak{g}), V\in \Gamma(u^{*}TM), X_{A},  \nabla V \in \Omega^{1}(P, u^{*}TM)$.\\

As for $E_{3}(A, u_{t})=||\mu(u_{t})-c||_{L^{2}}^{2}=\int_{X}|\mu(u_{t})-c|^{2}dvol$, we have \begin{eqnarray*}
\frac{d}{dt}|_{t=0}E_{3}&=&2\int<d\mu(\frac{d u_{t}}{dt}|_{t=0}), \mu(u_{0})-c>\\
&=&2\int <d\mu(V), \mu(u)-c>dvol. \end{eqnarray*}
Thus the first variation formula of Yang-Mills-Higgs functional for $u$ is:\begin{eqnarray*}
\frac{d}{dt}|_{t=0}\mathcal{YMH}(A, u_{t})&=&\frac{d}{dt}|_{t=0}(E_{2}+ E_{3})\\
&=&2\int_{X}(<\nabla_{A}V, d_{A}u>+<d\mu(V), \mu(u)-c>)\\
&=&2\int_{X}(<V, \nabla^{*}_{A}d_{A}u>+<V, (d\mu)^{*}(\mu(u)-c)>)\\
&=&2\int_{X}<V, \nabla^{*}_{A}d_{A}u+(d\mu)^{*}(\mu(u)-c)>),
\end{eqnarray*}
where $V=\frac{d u_{t}}{dt}|_{t=0}\in \Gamma( u^{*}TM)$ is the variation vector field, $\nabla_{A}: \Gamma(u^{*}TM)\rightarrow \Omega^{1}(P, u^{*}TM)$ the covariant derivative, and $\nabla^{*}_{A}$ is the adjoint operator of $\nabla_{A}.$\\

In all, we get the first Euler-Lagrange equation with respect to $u$:
\begin{eqnarray*} \frac{d}{dt}|_{t=0}\mathcal{YMH}(A, u_{t})=0\Leftrightarrow \nabla^{*}_{A}d_{A}u+(d\mu)^{*}(\mu(u)-c)=0.\\\end{eqnarray*}

Next we fix $u$ and study the first variation formula for $A\in \mathcal{A}$. It is obvious that we only need compute $E_{1}=||F_{A}||_{L^{2}}^{2}, E_{2}=||d_{A}u||_{L^{2}}^{2}$. \par
First we recall the variation of connection ([10]). Because space of connection on principle G-bundle $P\rightarrow X$ is an affine space: $d+\Omega^{1}(X, ad P)$, where $ad P$ is the associate bundle of $P$ with respect to the adjoint representation of $G$. Thus it suffices to vary $A$ along lines $A_{t}=A+tB, A_{0}=A, \forall B\in d+\Omega^{1}(X, ad P)$. It is easy to compute \begin{eqnarray*}
F_{A_{t}}&=&F_{A}+t D_{A}B+t^{2}B\wedge B,\\
 d_{A_{t}}u&=&du+X_{A+tB}(u)=d_{A}u+tX_{B}(u).
 \end{eqnarray*}
So we have
 \begin{eqnarray*}
E_{1}+E_{2}&=&\int_{X}<F_{A}+t D_{A}B+t^{2}B\wedge B,F_{A}+t D_{A}B+t^{2}B\wedge B>\\
&&+<d_{A}u+tX_{B}(u), d_{A}u+tX_{B}(u)>\\
&=&2t\int_{X}(<d_{A}(u), X_{B}(u)>+<F_{A}, D_{A}B>)+\cdots \end{eqnarray*}
Therefore
  \begin{eqnarray*}
\frac{d \mathcal{YMH}(A_{t}, u)}{dt}|_{t=0}=0
&& \Leftrightarrow  \int_{X}<d_{A}u, X_{B}(u)>+<F_{A}, D_{A}B>\\
&=&\int_{X}<d_{A}u, L_{u}B>+<F_{A}, D_{A}B>\\
&=&\int_{X}<L^{*}_{u}d_{A}u+D^{*}_{A}F_{A}, B>=0, \end{eqnarray*}
where $L_{u}: \mathfrak{g}\rightarrow u^{*}TM$ defined by $\xi\in \mathfrak{g}\mapsto X_{\xi}(u)$, $L^{*}_{u}: u^{*}TM\rightarrow \mathfrak{g}$ is adjoint of $L_{u}$, $D^{*}_{A}$ is adjoint operator of $D_{A}$. \par
Therefore we obtain the second Euler-Lagrange equation of the Yang-Mills-Higgs functional for $A$:\begin{eqnarray*}
L^{*}_{u}d_{A}u+D^{*}_{A}F_{A}=0. \end{eqnarray*}
To sum up, we obtain
\begin{theorem} The Euler-Lagrange equations of Yang-Mills-Higgs functional \begin{eqnarray*}\mathcal{YMH}(A,u)=||F_{A}||_{L^{2}}^{2}+||d_{A}u||_{L^{2}}^{2}+||\mu(u)-c||_{L^{2}}^{2} \end{eqnarray*} are as follows: \begin{eqnarray*}
L^{*}_{u}d_{A}u+D^{*}_{A}F_{A}&=&0;\\ \nabla^{*}_{A}d_{A}u+(d\mu)^{*}(\mu(u)-c)&=&0.\\ \end{eqnarray*}
\end{theorem}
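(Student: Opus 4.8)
The plan is to compute the first variation of $\mathcal{YMH}$ in its two arguments separately, exploiting that $E_1=\|F_A\|_{L^2}^2$ depends only on $A$, that $E_3=\|\mu(u)-c\|_{L^2}^2$ depends only on $u$, and that $E_2=\|d_Au\|_{L^2}^2$ depends on both. First I would fix the connection $A$ and take an arbitrary smooth path $u_t\in C_G^\infty(P,M)$ with $u_0=u$, letting $V=\frac{d}{dt}\big|_{t=0}u_t\in\Gamma(u^*TM)$ be the variation field. Only $E_2$ and $E_3$ then move. Writing $d_Au=(\partial u^\beta/\partial x^i+X_{A_i}u^\beta)\,dx^i\otimes(\partial/\partial y^\beta)$ in local coordinates and differentiating $E_2(A,u_t)=\int_X h^{ij}g_{\alpha\beta}(u_t)(\cdots)(\cdots)$ in $t$, the compatibility of the Levi-Civita connections $\nabla,\bar\nabla$ with $h$ and $g$ annihilates the derivatives of the metric coefficients, so the only nontrivial contribution is $\nabla_t\big(X_{A_i}(u_t^\beta)\big)$, which by the parallel-transport description of $\bar\nabla$ equals $\bar\nabla_V X_{A_i}$. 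This produces $\frac{d}{dt}\big|_{t=0}E_2=2\int_X\langle\nabla_A V,d_Au\rangle$, where $\nabla_AV=\nabla V+\nabla_VA$. Combined with the elementary identity $\frac{d}{dt}\big|_{t=0}E_3=2\int_X\langle d\mu(V),\mu(u)-c\rangle$, integration by parts (transferring $\nabla_A$ and $d\mu$ to their adjoints) gives $\frac{d}{dt}\big|_{t=0}\mathcal{YMH}(A,u_t)=2\int_X\langle V,\ \nabla^*_Ad_Au+(d\mu)^*(\mu(u)-c)\rangle$, and since $V$ is arbitrary this yields the first Euler-Lagrange equation.

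Second, I would fix $u$ and vary $A$. Because the space of connections is affine over $\Omega^1(X,\mathrm{ad}P)$, it suffices to take $A_t=A+tB$. Now $E_3$ is constant, and one has the standard expansions $F_{A_t}=F_A+tD_AB+t^2B\wedge B$ and, from $d_Au=du+L_uA$, $d_{A_t}u=d_Au+tX_B(u)=d_Au+tL_uB$. Differentiating $E_1+E_2$ at $t=0$ gives $2\int_X\big(\langle F_A,D_AB\rangle+\langle d_Au,L_uB\rangle\big)$; passing to the adjoints $D_A^*$ and $L_u^*$ and using that $B$ is arbitrary yields the second Euler-Lagrange equation $L_u^*d_Au+D_A^*F_A=0$. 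Assembling the two variational identities then proves the theorem.

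The step I expect to be the main obstacle is the variation of $E_2$: making rigorous the identity $\nabla_t\big(X_{A_i}(u_t^\beta)\big)=\bar\nabla_V X_{A_i}$ and, more broadly, handling the covariant derivative $\nabla_A$ on sections of the \emph{nonlinear} fibration $\mathcal{F}=P\times_GM$ through the equivariant-map picture of Theorem 2.2. One must verify that the admissible variations $u_t$ stay $G$-equivariant so that $V\in\Gamma(u^*TM)$ is genuinely well-defined, that the pullback connection $\bar\nabla$ on $u^*TM\to P$ interacts correctly with the $G$-action, and that the pointwise pairings and the integration by parts descend to $X$. The identities of Lemma 2.3, $L_u=-J(d\mu(u))^*$ and $L_u^*=d\mu(u)J$, can be invoked if one wishes to re-express $L_u^*d_Au$ intrinsically, though they are not needed for the statement as phrased. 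Once this bookkeeping is in place, the argument reduces to the classical first-variation computations for harmonic maps and for Yang-Mills connections, of which these equations are the common generalization.
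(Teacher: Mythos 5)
Your proposal is correct and follows essentially the same route as the paper's own derivation: fixing $A$ and varying $u_{t}$ in local coordinates with the parallel-transport identity $\nabla_{t}(X_{A_{i}}(u_{t}^{\beta}))=\bar{\nabla}_{V}X_{A_{i}}$ to obtain $2\int_{X}\langle\nabla_{A}V,d_{A}u\rangle$, then varying $A$ along affine lines $A_{t}=A+tB$ with the expansions of $F_{A_{t}}$ and $d_{A_{t}}u$, and passing to adjoints in both cases. The bookkeeping issues you flag (equivariance of the variations and the descent of the pairings to $X$) are exactly the points the paper handles via the identification $\mathcal{L}\cong C_{G}^{\infty}(P,M)$ of Theorem 2.2.
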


$\hspace{-17pt}$$\textbf{Remark 2.1.}$  When $G={1}$, then the Euler-Lagrange equations above can be reduced to the equation of harmonic maps \begin{eqnarray*}
\nabla^{*}du=-Trace \nabla du=0. \end{eqnarray*}.\\
  When the fiber $M={pt}$, then  the Euler-Lagrange equations above can be reduced to the equation of Yang-Mills connections [1, 6]:
\begin{eqnarray*}
D^{*}_{A}F_{A}=0. \end{eqnarray*}
Therefore, the Euler-Lagrange equations of Yang-Mills-Higgs functional are common generalization of harmonic maps and Yang-Mills connections.\par
In addition, there is the gauge group $\mathcal{G}=Aut(P)$ action on pair $(A, u)$. Similar to the Yang-Mills functional , the Yang-Mills-Higgs functional is invariant under group $\mathcal{G}$ action, therefore we have \\
\begin{proposition} The Euler-Lagrange equations above are invariant under the gauge group $\mathcal{G}=Aut(P)$ action. \\
\end{proposition}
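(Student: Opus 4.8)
The plan is to deduce invariance of the Euler--Lagrange system from invariance of $\mathcal{YMH}$ itself, using that $\mathcal{G}=\mathrm{Aut}(P)$ acts on the configuration space $\mathcal{A}^{1,1}\times\mathcal{L}$ by diffeomorphisms. Recall that a gauge transformation $g\in\mathcal{G}$, viewed as an equivariant map $g\colon P\to G$, acts by $g\cdot A=\mathrm{Ad}_{g^{-1}}A+g^{-1}dg$ and, on $u\in C^{\infty}_{G}(P,M)$, by $(g\cdot u)(p)=g(p)^{-1}\cdot u(p)$. First I would check that this preserves $\mathcal{A}^{1,1}$: since $F_{g\cdot A}=\mathrm{Ad}_{g^{-1}}F_{A}$, the $(1,1)$-type of the curvature is unchanged, so $g\cdot(A,u)\in\mathcal{A}^{1,1}\times\mathcal{L}$ whenever $(A,u)$ is.

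Next I would verify that $\mathcal{YMH}(g\cdot A,g\cdot u)=\mathcal{YMH}(A,u)$ term by term. For the Yang--Mills term, $\|F_{g\cdot A}\|_{L^{2}}^{2}=\|F_{A}\|_{L^{2}}^{2}$ because $F_{g\cdot A}=\mathrm{Ad}_{g^{-1}}F_{A}$ and $\langle\cdot,\cdot\rangle$ is $\mathrm{Ad}$-invariant. For the Higgs term, equivariance of the moment map gives $\mu(g\cdot u)=\mathrm{Ad}_{g^{-1}}\mu(u)$, and since $c$ is central we have $\mathrm{Ad}_{g^{-1}}c=c$, hence $|\mu(g\cdot u)-c|=|\mathrm{Ad}_{g^{-1}}(\mu(u)-c)|=|\mu(u)-c|$ pointwise by $\mathrm{Ad}$-invariance again. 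For the energy term I would use the rewriting $d_{A}u=du+L_{u}A$ from $(13)$ together with the Maurer--Cartan formula for $g\cdot A$ and the naturality of the fundamental vector fields, $X_{\mathrm{Ad}_{g^{-1}}\xi}(g^{-1}x)=g^{-1}_{*}X_{\xi}(x)$, to show that $d_{g\cdot A}(g\cdot u)(p)v=g(p)^{-1}_{*}\bigl(d_{A}u(p)v\bigr)$ as a $u^{*}TM$-valued form on $P$; since the chosen $J\in\mathcal{J}(M,\omega,\mu)$ is $G$-invariant, $G$ acts on $(M,g)$ by isometries, so pointwise norms of $d_{g\cdot A}(g\cdot u)$ and $d_{A}u$ coincide and $\|d_{g\cdot A}(g\cdot u)\|_{L^{2}}^{2}=\|d_{A}u\|_{L^{2}}^{2}$.

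Finally I would conclude as follows. Given any smooth variation $(A_{t},u_{t})$ with $(A_{0},u_{0})=(A,u)$, invariance of $\mathcal{YMH}$ gives $\mathcal{YMH}(g\cdot A_{t},g\cdot u_{t})=\mathcal{YMH}(A_{t},u_{t})$; differentiating at $t=0$ and comparing with the first variation formulas of \S2.2 shows that the negative gradient vector field $\bigl(L^{*}_{u}d_{A}u+D^{*}_{A}F_{A},\ \nabla^{*}_{A}d_{A}u+(d\mu)^{*}(\mu(u)-c)\bigr)$ transforms equivariantly under $\mathcal{G}$: its value at $g\cdot(A,u)$ is the image of its value at $(A,u)$ under the isometries induced by $g$. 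In particular the zero set of this field is $\mathcal{G}$-invariant, so $(A,u)$ solves the Euler--Lagrange equations if and only if $g\cdot(A,u)$ does. The only point requiring an honest computation is the naturality statement $d_{g\cdot A}(g\cdot u)=g^{-1}_{*}\,d_{A}u$ for the generalized covariant derivative; the rest is formal once this and the $\mathrm{Ad}$- and $G$-invariance of the relevant metrics are in hand.
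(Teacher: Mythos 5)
Your proposal is correct and follows essentially the same route as the paper, which simply asserts that the $\mathcal{G}$-invariance of $\mathcal{YMH}$ (together with the fact that $\mathcal{G}$ acts by isometries of the relevant $L^{2}$ metrics) forces the Euler--Lagrange equations to be invariant; the paper gives no further detail. Your term-by-term verification of the invariance of the functional and the naturality computation $d_{g\cdot A}(g\cdot u)=g^{-1}_{*}\,d_{A}u$ supply exactly the details the paper leaves implicit.
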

\section{Heat flow}
$\hspace{15pt}$In this section, we will prove the long term existence of the heat flow of Yang-Mills-Higgs functional (2). The heat flow equations are \begin{eqnarray*}
\frac{\partial A}{\partial t}&=&-L^{*}_{u}d_{A}u-D^{*}_{A}F_{A};\\
\frac{\partial u}{\partial t}&=&-\nabla^{*}_{A}d_{A}u-(d\mu)^{*}(\mu(u)-c).
\end{eqnarray*}
Modifying Hong [8]'s method, we first prove the energy inequality of Yang-Mills-Higgs functional.
\subsection{Energy inequality}
$\hspace{15pt}$Consider the behavior of the Yang-Mills-Higgs functional  $\mathcal{YMH}(A_{t}, u_{t})$ along the heat flow equations  \begin{eqnarray*}
\frac{\partial A}{\partial t}&=&-L^{*}_{u}d_{A}u-D^{*}_{A}F_{A};\\
\frac{\partial u}{\partial t}&=&-\nabla^{*}_{A}d_{A}u-(d\mu)^{*}(\mu(u)-c).
\end{eqnarray*}
Following [8], we can similarly obtain
\begin{lemma}(Energy inequality) If the pair $(A_{t}, u_{t})$ is a solution to Yang-Mills-Higgs heat flow (5),(6) on $X\times [0,\infty)$, then for $t<\infty$
\begin{equation} \mathcal{YMH}(A_{t}, u_{t})+2\int^{t}_{0}\int_{X}(|\frac{\partial u}{\partial \tau}|^{2}+|L^{*}_{u}d_{A}u+D^{*}_{A}F_{A}|^{2})dxd\tau= \mathcal{YMH}(A_{0}, u_{0}). \end{equation}
\end{lemma}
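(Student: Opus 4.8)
The plan is to read (15) as the standard energy identity for a downward gradient flow. Indeed, the heat flow (5), (6) is precisely the negative $L^{2}$-gradient flow of $\mathcal{YMH}$ on $\mathcal{A}^{1,1}\times\mathcal{L}$, so along it the functional is nonincreasing and its total decrease equals the time integral of the squared $L^{2}$-norm of the gradient. Concretely, I would first differentiate $t\mapsto\mathcal{YMH}(A_{t},u_{t})$ in $t$. Since $X$ is compact and $(A_{t},u_{t})$ is, by hypothesis, a smooth solution on $X\times[0,\infty)$, one may differentiate under the integral sign, and the chain rule splits the total derivative into the two one-sided first variations already computed in Section 2: the variation of $E_{2}+E_{3}$ with $A$ frozen and $u_{t}$ moving with velocity $V=\partial_{t}u$, plus the variation of $E_{1}+E_{2}$ with $u$ frozen and $A_{t}$ moving with velocity $B=\partial_{t}A$. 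The only place where the two variations interact is $E_{2}=\|d_{A}u\|_{L^{2}}^{2}$, and there the identity $d_{A_{t}}u_{t}=du_{t}+L_{u_{t}}A_{t}$ shows that $\partial_{t}(d_{A_{t}}u_{t})=\nabla_{A}V+L_{u}B$, so its contribution is exactly the sum of the two one-sided terms and nothing is double-counted.

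Performing the integrations by parts exactly as in Section 2 (using $X_{B}(u)=L_{u}B$ together with the adjoints $D_{A}^{*},\nabla_{A}^{*},(d\mu)^{*},L_{u}^{*}$), this gives
\[ \frac{d}{dt}\mathcal{YMH}(A_{t},u_{t})=2\int_{X}\langle\partial_{t}u,\ \nabla^{*}_{A}d_{A}u+(d\mu)^{*}(\mu(u)-c)\rangle+2\int_{X}\langle\partial_{t}A,\ L^{*}_{u}d_{A}u+D^{*}_{A}F_{A}\rangle. \]
Substituting the flow equations (5), (6) turns the first integrand into $-|\partial_{t}u|^{2}$ and the second into $-|L^{*}_{u}d_{A}u+D^{*}_{A}F_{A}|^{2}=-|\partial_{t}A|^{2}$, hence
\[ \frac{d}{dt}\mathcal{YMH}(A_{t},u_{t})=-2\int_{X}(|\partial_{t}u|^{2}+|L^{*}_{u}d_{A}u+D^{*}_{A}F_{A}|^{2})\,dx\le 0. \]
Integrating this over $[0,t]$ and moving the dissipation term to the left-hand side yields (15).

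There is essentially no analytic obstacle once the existence of a smooth solution is granted: compactness of $X$ legitimises differentiation under the integral sign and all the integrations by parts, and $\partial_{t}A$ automatically lies in $\Omega^{1}(X,\mathrm{ad}\,P)$ since $\mathcal{A}$ is an affine space modelled on it, so the adjoint identities $\int_{X}\langle D_{A}B,F_{A}\rangle=\int_{X}\langle B,D^{*}_{A}F_{A}\rangle$ and $\int_{X}\langle\nabla_{A}V,d_{A}u\rangle=\int_{X}\langle V,\nabla^{*}_{A}d_{A}u\rangle$ apply verbatim. The only real care needed is the bookkeeping of assembling the first variations of $E_{1},E_{2},E_{3}$ without dropping or double-counting the $E_{2}$ cross term. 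Equivalently, one may argue abstractly: if $x_{t}$ solves $\dot{x}_{t}=-\mathrm{grad}\,\mathcal{E}(x_{t})$ on a (formal) Riemannian manifold, then $\frac{d}{dt}\mathcal{E}(x_{t})=-\|\mathrm{grad}\,\mathcal{E}(x_{t})\|^{2}$; Section 2 has already identified $\mathrm{grad}\,\mathcal{YMH}$ with the pair on the right-hand sides of (5), (6), and unwinding the $L^{2}$-norms gives (15).
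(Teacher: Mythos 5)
Your proposal is correct and follows essentially the same route as the paper: both compute $\frac{d}{dt}\mathcal{YMH}(A_{t},u_{t})$ by integrating by parts against the first variations of $E_{1},E_{2},E_{3}$ (with the $E_{2}$ cross term handled via $\partial_{t}(d_{A}u)=\nabla_{A}\partial_{t}u+L_{u}\partial_{t}A$), substitute the flow equations (5), (6) to identify the result with $-2\int_{X}(|\partial_{t}u|^{2}+|L^{*}_{u}d_{A}u+D^{*}_{A}F_{A}|^{2})$, and integrate in time. The paper merely organizes the same computation by starting from $\int_{X}|\partial_{t}u|^{2}$ and treating the curvature term separately, whereas you package it as the standard dissipation identity for a negative gradient flow; the content is identical.
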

\begin{proof} On one side, by (5), (6) we have \begin{eqnarray*}
\int_{X}|\frac{\partial u}{\partial t}|^{2}dx
&=&(\frac{\partial u}{\partial t},-\nabla^{*}_{A}d_{A}u-(d\mu)^{*}(\mu(u)-c))((6))\\
&=&-(\frac{\partial u}{\partial t},\nabla^{*}_{A}d_{A}u)-(\frac{\partial u}{\partial t},(d\mu)^{*}(\mu(u)-c))\\
&=&-(\nabla_{A}\frac{\partial u}{\partial t}, d_{A}u))-(\frac{\partial u}{\partial t}, (d\mu)^{*}(\mu(u)-c))\\
&=&-(\frac{\partial (d_{A}u)}{\partial t}, d_{A}u))+(\frac{\partial A}{\partial t}(u), d_{A}u)-(\frac{\partial}{\partial t}, (d\mu)^{*}(\mu(u)-c))\\
&=&-(\frac{\partial (d_{A}u)}{\partial t}, d_{A}u))-(L^{*}_{u}d_{A}u+D^{*}_{A}F_{A})(u), d_{A}u)\\
&&-((d\mu)(\frac{\partial}{\partial t}), \mu(u)-c)((5))\\
&=&-\frac{1}{2}\frac{d}{d t}\int_{X}(|d_{A}u|^{2}+|\mu(u)-c|^{2})-(L^{*}_{u}d_{A}u+D^{*}_{A}F_{A}), L^{*}_{u}d_{A}u).
\end{eqnarray*}
On the other side,
\begin{eqnarray*}
\frac{1}{2}\frac{d}{d t}\int_{X} |F_{A}|^{2}&=&(\frac{\partial F_{A}}{\partial t}, F_{A})\\
&=&(D_{A}\frac{\partial A}{\partial t}, F_{A})\\
&=&-(D_{A}(L^{*}_{u}d_{A}u+D^{*}_{A}F_{A}), F_{A})((5))\\
&=&-(L^{*}_{u}d_{A}u+D^{*}_{A}F_{A}, D^{*}_{A}F_{A}).
\end{eqnarray*}
Therefore, we have \begin{eqnarray*}
\frac{1}{2}\frac{d}{d t}\int_{X}(|F_{A}|^{2}+|d_{A}u|^{2}+|\mu(u)-c|^{2})dx=-\int_{X}(|\frac{\partial u}{\partial t}|^{2}+|L^{*}_{u}d_{A}u+D^{*}_{A}F_{A}|^{2})dx. \end{eqnarray*}
Integration on both sides we complete the proof. \end{proof}

\begin{corollary} Yang-Mills-Higgs functional decreases along the heat flow.  \end{corollary}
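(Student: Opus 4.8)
Since the final statement is an immediate consequence of Lemma 3.1, the plan is short. The key input is the pointwise‑in‑time differential identity that appears inside the proof of Lemma 3.1; before integrating in $t$ it reads
\[ \frac{1}{2}\frac{d}{dt}\mathcal{YMH}(A_t,u_t) = -\int_X\Big(\big|\frac{\partial u}{\partial t}\big|^2 + \big|L^*_u d_A u + D^*_A F_A\big|^2\Big)\,dx. \]
The first observation is simply that the right‑hand side is manifestly non‑positive: it is minus a sum of two squared $L^2$‑norms, the first measuring the speed of the $u$‑component of the flow (equation (6)) and the second the speed of the $A$‑component, since $\partial A/\partial t=-(L^*_u d_A u+D^*_A F_A)$ by equation (5). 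Hence $t\mapsto\mathcal{YMH}(A_t,u_t)$ is a $C^1$ function with non‑positive derivative on the interval where the flow exists, and is therefore non‑increasing.

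To phrase this in integrated form I would apply Lemma 3.1 not only with lower limit $0$ but on an arbitrary subinterval $[s,t]\subset[0,\infty)$ — the computation in that lemma is unchanged under a shift of the initial time — obtaining
\[ \mathcal{YMH}(A_t,u_t) + 2\int_s^t\!\int_X\Big(\big|\frac{\partial u}{\partial \tau}\big|^2 + \big|L^*_u d_A u + D^*_A F_A\big|^2\Big)\,dx\,d\tau = \mathcal{YMH}(A_s,u_s). \]
Dropping the non‑negative integral term yields $\mathcal{YMH}(A_t,u_t)\le\mathcal{YMH}(A_s,u_s)$ whenever $0\le s\le t$, which is exactly the assertion. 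One can moreover remark that equality over $[s,t]$ forces $\partial_\tau u\equiv 0$ and $L^*_u d_A u+D^*_A F_A\equiv 0$ on that interval, i.e. $(A_\tau,u_\tau)$ is a critical point of $\mathcal{YMH}$ and the flow is stationary there; so the decrease is strict unless the pair already solves the Euler–Lagrange equations of Theorem 2.4.

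There is essentially no obstacle here: the whole content is packaged in Lemma 3.1. The only point that deserves a word of care is that monotonicity compares $\mathcal{YMH}$ at two arbitrary times, which one obtains either by re‑running the energy computation of Lemma 3.1 with a shifted lower limit, or equivalently by invoking the elementary fact that a differentiable real function with non‑positive derivative is non‑increasing.
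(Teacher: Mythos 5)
Your argument is correct and coincides with the paper's (implicit) proof: the corollary is stated as an immediate consequence of Lemma 3.1, whose proof establishes exactly the pointwise-in-time identity $\tfrac{1}{2}\tfrac{d}{dt}\mathcal{YMH}(A_t,u_t)=-\int_X\bigl(|\partial_t u|^2+|L^*_u d_Au+D^*_AF_A|^2\bigr)\,dx\le 0$ that you invoke. Your additional remarks on the shifted interval $[s,t]$ and on the equality case are harmless elaborations of the same computation.
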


\subsection{Local existence}
$\hspace{15pt}$Following Donaldson's approach [6, 8], consider the complex gauge group ${\mathcal{G}}^{\mathbb{C}}$, which acts on $\mathcal{A}^{1,1}$ by
\begin{eqnarray} \bar{\partial}_{g(A)}=g\cdot \bar{\partial}_{A}\cdot g^{-1},  {\partial}_{g(A)}={g^{*}}^{-1}\cdot {\partial}_{A}\cdot g^{*} \end{eqnarray}
where $g^{*}$ denotes the conjugate transpose of $g$. Extending the action of the unitary gauge group,
\begin{eqnarray*}  \mathcal{G}=\{g\in {\mathcal{G}}^{\mathbb{C}}|h(g)=g^{*}g=I\},\end{eqnarray*}
which means
\begin{eqnarray} g^{-1}\cdot d_{g(A)}\cdot g=\bar{\partial}_{A}+h^{-1}{\partial}_{A}h,
g^{-1}F_{g(A)}g=F_{A}+\bar{\partial}_{A}(h^{-1}{\partial}_{A}h), \end{eqnarray}
where $h=g^{*}g$.\par
Consider following heat equation for a one-parameter family $H_{t}$ of metrics on some holomorphic bundle over $X$:\begin{equation}
\frac{\partial H}{\partial t}=-2iH[\hat{F_{t}}+\mu(u)-c], \end{equation}
where $\hat{F_{t}}=\wedge F_{H_{t}}$.
This heat equation is completely equivalent to the equation:
 \begin{equation}
\frac{\partial h}{\partial t}=-\triangle_{A_{0}}h-i[h\hat{F}_{A_{0}}+\hat{F}_{A_{0}}h+2(\mu(u_{0})-c)h]+2i\wedge({\bar \partial}_{A_{0}}h h^{-1}{\partial}_{A_{0}}h), \end{equation}
with $h(0)=I$, where $\triangle_{A_{0}}$ is the Laplacian defined through an integrable connection $A_{0} ({\bar \partial}^{2}_{A_{0}}=0)$, h is a positive self-adjoint endomorphism of a unitary bundle such that $H(t)=H_{0}h(t)$, and $H_{0}$ is the initial Hermitian metric on $P$.
As pointed out in [6, 8], (18) is a nonlinear parabolic equation, so we obtain a short time solution to (18) by standard parabolic PDE theory. Therefore we obtain a short-time solution to (18).\par

Then we need do some necessary calculations.
First we need generalize the K$\ddot{\textmd{a}}$hler identities ([6, 7, 11]) from vector bundle-values case to fiber bundle-values case where the fibre is a manifold not a vector space.\par
Fix the K$\ddot{\textmd{a}}$hler metric on $X$ as
\begin{eqnarray*} h_{0}=h_{k\bar{j}}dz_{k}\otimes d\bar{z_{j}}, \end{eqnarray*}
and define the K$\ddot{\textmd{a}}$hler form $\omega_{X}$ as
\begin{eqnarray*} \omega_{X}=\frac{i}{2}h_{k\bar{j}}dz_{k}\wedge d\bar{z_{j}}, i=\sqrt{-1}. \end{eqnarray*}
Let $A$ be a connection on the principle bundle $P$ over the K$\ddot{\textmd{a}}$hler manifold $X$, so we have
\begin{eqnarray*}  d_{A}: C_{G}^{\infty}(P,M)\rightarrow \Omega_{G}^{1}(P, u^{*}TM),
d_{A}u:=du+L_{u}A.
\end{eqnarray*}
Then we have
\begin{eqnarray*} {\partial}_{A}:C_{G}^{\infty}(P,M)\rightarrow \Omega_{G}^{1,0}(P, u^{*}TM)\\
{\partial}_{A}u=\frac{1}{2}(d_{A}u-J\cdot d_{A}u\cdot j),
\end{eqnarray*}
and
\begin{eqnarray*}
{\bar \partial}_{A}:C_{G}^{\infty}(P,M)\rightarrow \Omega_{G}^{0,1}(P, u^{*}TM)\\
{\bar \partial}_{A}u=\frac{1}{2}(d_{A}u+J\cdot d_{A}u\cdot j),
\end{eqnarray*}
making up  \begin{eqnarray*}d_{A}={\partial}_{A}+{\bar \partial}_{A}, \end{eqnarray*}
where $j$ is the fixed complex structure of the K$\ddot{\textmd{a}}$hler manifold $X$, and $J$ is a $\omega$-compatible almost complex structure of the symplectic manifold $M$.\par

 We denote $\Omega^{k}$ the $k$-form and recall the decomposition
\begin{eqnarray*}  \Omega^{k}=\sum_{p+q=k}\Omega^{p,q}, \end{eqnarray*}
of $k$-forms into $(p,q)$ forms. On the K$\ddot{\textmd{a}}$hler manifold $X$ with K$\ddot{\textmd{a}}$hler form $\omega_{X}$ we define
\begin{eqnarray*} L: \Omega^{p,q}\rightarrow \Omega^{p+1,q+1}, L(\eta)=\eta\wedge\omega_{X}. \end{eqnarray*}
The we can define an algebraic trace operator $\wedge$ by
\begin{eqnarray*} \wedge=L^{*}:\Omega^{p,q}\rightarrow \Omega^{p-1,q-1}. \end{eqnarray*}
Through the definition of $\wedge$, we have

\begin{lemma}(Generalized K$\ddot{\textmd{a}}$hler identities) When the fiber $M$ of fiber bundle $\mathcal{F}=P\times_{G}M\rightarrow X$ is a compact $K\ddot{\textmd{a}}hler$ manifold, or equivalently $J$ is a complex structure of $M$, we have
\begin{eqnarray}
{\bar \partial}^{*}_{A}=i[\partial_{A}, \wedge];\\
{\partial}^{*}_{A}=-i[{\bar \partial}_{A}, \wedge].
\end{eqnarray}
\end{lemma}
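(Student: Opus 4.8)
The plan is to prove these as \emph{pointwise} identities in the classical ``osculate to the flat model'' style of Weil and de Rham, adapted to the fibre-bundle-valued setting. Both $\bar\partial^{*}_{A}$ and $i[\partial_{A},\wedge]$ are first-order differential operators $\Omega^{p,q}(X,u^{*}TM)\to\Omega^{p,q-1}(X,u^{*}TM)$ (and similarly for $\partial^{*}_{A}$ and $-i[\bar\partial_{A},\wedge]$), so it suffices to check that the two sides agree at an arbitrary point $x_{0}\in X$, acting on an arbitrary smooth $u^{*}TM$-valued $(p,q)$-form. The whole point is then to arrange local data so that at $x_{0}$ the problem collapses to the corresponding statement on $\mathbb{C}^{n}$ with the Euclidean K\"ahler form and a trivial Hermitian bundle.

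First I would set up the good local data. On the K\"ahler manifold $X$ choose holomorphic normal coordinates $(z^{1},\dots ,z^{n})$ centred at $x_{0}$, so that $h_{k\bar j}=\delta_{kj}+O(|z|^{2})$ and the Christoffel symbols of $\omega_{X}$ vanish at $x_{0}$. On the coefficient side, note that $u^{*}TM$ carries the Hermitian metric $u^{*}g$, the fibrewise complex structure induced by $J$, and the connection $\nabla_{A}$ assembled from $A$ and the Levi-Civita connection $\bar\nabla$ of $(M,g)$ --- this is exactly the connection used in Section~2.2, whose compatibility with the metrics, $\nabla_{k}(h^{ij}g_{\alpha\beta})=0$, was recorded there. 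Because $M$ is K\"ahler, $\bar\nabla$ is $J$-linear, hence $\nabla_{A}$ is a Hermitian connection compatible with $J$; together with the integrability $\bar\partial^{2}_{A}=0$ in force throughout Section~3, this exhibits $(u^{*}TM,\bar\partial_{A},u^{*}g)$ as a Hermitian holomorphic vector bundle over $X$ whose Chern connection is precisely $\nabla_{A}=\partial_{A}+\bar\partial_{A}$. Finally pick a local unitary frame of $u^{*}TM$ that is $\nabla_{A}$-normal at $x_{0}$, i.e.\ whose connection matrix vanishes at $x_{0}$.

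With these choices, at $x_{0}$ the operators $\partial_{A}$, $\bar\partial_{A}$ act as the Euclidean $\partial$, $\bar\partial$; the operator $\wedge$ coincides with the Euclidean $\wedge$ and has vanishing first derivatives at $x_{0}$ (so no spurious term appears when it is composed with $\partial_{A}$); and the formal adjoint $\bar\partial^{*}_{A}$ differs from the Euclidean $\bar\partial^{*}$ only by zeroth-order terms built from first derivatives of $h_{k\bar j}$ and of the frame, all of which vanish at $x_{0}$. The lemma therefore reduces to the flat computation on $\mathbb{C}^{n}$: with $\omega=\frac{i}{2}\sum_{k}dz^{k}\wedge d\bar z^{k}$, the operator $\wedge=L^{*}$ is, up to the normalising constant, interior multiplication by $\omega$, and a direct calculation on monomials $f\,dz^{I}\wedge d\bar z^{J}\otimes e_{\alpha}$ --- differentiate, contract, and compare with the explicit Euclidean expression for $\bar\partial^{*}$ obtained by integration by parts --- gives $\bar\partial^{*}=i(\partial\wedge-\wedge\partial)=i[\partial,\wedge]$. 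Applying complex conjugation (equivalently the fibre conjugation on $u^{*}TM$), which interchanges $\partial_{A}\leftrightarrow\bar\partial_{A}$ with the stated change of sign, yields the second identity $\partial^{*}_{A}=-i[\bar\partial_{A},\wedge]$.

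The step I expect to be the main obstacle is the one in the second paragraph: verifying rigorously that the covariant derivative $d_{A}=d(\cdot)+L_{u}A$ on sections of $\mathcal{F}$, together with the induced $\nabla_{A}$ on $u^{*}TM$, really does assemble into the Chern connection of a Hermitian \emph{holomorphic} vector bundle, and hence that ``second covariant derivatives'' entering $\bar\partial^{*}_{A}$ are formed with a connection for which integration by parts produces no extra curvature terms. This is exactly where the K\"ahler hypothesis on the fibre $M$ is essential: if $J$ were merely an almost complex structure, $\bar\partial_{A}$ would not square to zero, the Nijenhuis tensor would contribute torsion, and no K\"ahler identity of this shape could hold. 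The compatibility relations $\nabla_{t}(h^{ij}g_{\alpha\beta})=\nabla_{k}(h^{ij}g_{\alpha\beta})=0$ from Section~2.2 are what make the adjoints behave, and once one is in the $\nabla_{A}$-normal frame they guarantee that the problem genuinely degenerates to the Euclidean model treated above.
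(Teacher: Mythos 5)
Your proposal is correct and is essentially the paper's own argument: the paper's proof consists of the single remark that the fibre-bundle-valued case is ``essentially the same as the vector-bundle-valued case [6, 7, 11]'', and your writeup simply supplies the details of that reduction --- identifying $(u^{*}TM,\bar\partial_{A},u^{*}g)$ with the induced connection as a Hermitian (holomorphic) vector bundle and then invoking the classical K\"ahler identities via normal coordinates and a normal frame. The step you flag as the main obstacle is exactly the content the paper leaves implicit, so your version is a legitimate, more detailed rendering of the same approach rather than a different one.
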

\begin{proof} Our case is essentially same as vector bundle-value case [6, 7, 11]. \end{proof}

Then we will do some calculations.
\begin{lemma} Let $A_{0}\in \mathcal{A}^{1,1}$ is a connection on $P$. Assume that $u_{0}\in C_{G}^{\infty}(P,M)$ is holomorphic with respect to the connection $A_{0}$ and $J$ ($A_{0}$ and $J$ determine a holomorphic structure on $P\times_{G}M$), i.e., ${\bar \partial}_{A_{0}}u_{0}=0$. Assume that $A(t)=g(t)(A_{0}), u(t)=g(t)(u_{0}), g(t)\in \mathcal{G}^{\mathbb{C}}$. Then we have \begin{eqnarray}
{\bar \partial}_{A(t)}u(t)=0, F_{A(t)}\in \Omega^{1,1}.
\end{eqnarray}
\end{lemma}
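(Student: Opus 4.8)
The plan is to exploit the fact that, by construction, the complex gauge group $\mathcal{G}^{\mathbb{C}}$ acts on $\mathcal{A}^{1,1}$ and on equivariant maps in a way that intertwines the $\bar\partial$-operators, so holomorphicity of the pair is preserved. Concretely, recall from (16) that $\bar\partial_{g(A)} = g\cdot\bar\partial_A\cdot g^{-1}$. I would first argue that the analogous relation holds for the action on sections: for $u\in C_G^\infty(P,M)$ and $g\in\mathcal{G}^{\mathbb{C}}$, one has $\bar\partial_{g(A)}(g(u)) = g\bigl(\bar\partial_A u\bigr)$, where on the right $g$ acts on the $u^*TM$-valued form in the natural way. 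This is really the statement that the pair $(A,u)\mapsto(\bar\partial_A, u)$ transforms as the holomorphic data of the fibration $P\times_G M$, and the complex gauge action is by holomorphic bundle isomorphisms. Given ${\bar\partial}_{A_0}u_0 = 0$, applying this with $g=g(t)$ yields ${\bar\partial}_{A(t)}u(t) = g(t)\bigl({\bar\partial}_{A_0}u_0\bigr) = 0$, which is the first assertion.

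For the second assertion, $F_{A(t)}\in\Omega^{1,1}$, I would use the relation in (17): $g^{-1}F_{g(A)}g = F_A + \bar\partial_A(h^{-1}\partial_A h)$ with $h = g^*g$. Since $A_0\in\mathcal{A}^{1,1}$ means $F_{A_0}\in\Omega^{1,1}$, and since $h^{-1}\partial_{A_0}h$ is a $(1,0)$-form so that $\bar\partial_{A_0}(h^{-1}\partial_{A_0}h)$ is a $(1,1)$-form, the right-hand side lies in $\Omega^{1,1}$; conjugation by $g$ preserves form-type, hence $F_{A(t)}\in\Omega^{1,1}$. Equivalently and perhaps more cleanly, one can note that $\bar\partial_{A(t)}^2 = g\,\bar\partial_{A_0}^2\,g^{-1} = 0$ because $A_0$ is integrable, and the vanishing of $\bar\partial_{A(t)}^2$ is exactly the condition that $F_{A(t)}$ has no $(0,2)$-component; since $F_{A(t)}$ is $\mathfrak{g}$-valued and real (in the appropriate sense), the $(2,0)$-component also vanishes, leaving $F_{A(t)}\in\Omega^{1,1}$. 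I would present this integrability argument as the primary one since it is coordinate-free and does not require unwinding the $h$-formula.

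The main obstacle I anticipate is being careful about what "$g$ acts on $u^*TM$-valued forms" means, since $M$ is a genuine manifold and not a vector space — so the naive matrix manipulations of the vector-bundle case do not literally apply. The right framework is that $A_0$ together with $J$ determines a holomorphic structure on the total space $P\times_G M$ as a fibration over $X$ (as the lemma statement itself notes in parentheses), a complex gauge transformation $g(t)$ is a biholomorphism of this fibration covering the identity on $X$ after we change the holomorphic structure by $g(t)$, and holomorphic sections are carried to holomorphic sections. I would make this precise by working in the equivariant-map picture $C_G^\infty(P,M)$ and checking the transformation rule for ${\bar\partial}_A u = \tfrac12(d_A u + J\,d_A u\, j)$ under $A\mapsto g(A)$, $u\mapsto g(u)$, using the definition $d_A u = du + L_u A$ and the explicit action (16) on the $(0,1)$-part of $A$. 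Once the intertwining identity ${\bar\partial}_{g(A)}(g(u)) = g({\bar\partial}_A u)$ is established, both conclusions of the lemma are immediate, so essentially all the work is in that one identity.
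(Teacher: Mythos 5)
Your proposal is correct and follows essentially the same route as the paper: the first claim via the intertwining relation ${\bar \partial}_{g(A)}=g\cdot{\bar \partial}_{A}\cdot g^{-1}$ applied to $u(t)=g(t)u_{0}$, and the second via the curvature transformation $g^{-1}F_{g(A)}g=F_{A}+{\bar \partial}_{A}(h^{-1}\partial_{A} h)\in\Omega^{1,1}$. Your extra care about what the complex gauge action means on $u^{*}TM$-valued forms when $M$ is a genuine manifold, and the alternative integrability argument via ${\bar\partial}_{A(t)}^{2}=0$, go beyond what the paper writes but do not change the substance.
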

\begin{proof} By (15) and (16), we have \begin{eqnarray*}
{\bar \partial}_{g(A_{0})}&=&g\cdot {\bar \partial}_{A_{0}}\cdot g^{-1}; \\
g^{-1}F_{g(A_{0})}g&=& F_{A_{0}}+{\bar \partial}_{A_{0}}(h^{-1}\partial h), \end{eqnarray*}
where $h=g^{*}g$.
Thus by the transformation formulas and conditions above we have \begin{eqnarray*}
F_{A(t)}=F_{g^{*}(A_{0})}=g(F_{A_{0}}+{\bar \partial}_{A_{0}}(h^{-1}\partial h))g^{-1}\in \Omega^{1,1}.\end{eqnarray*}
And
\begin{eqnarray*} {\bar \partial}_{A(t)}u(t)=g\cdot {\bar \partial}_{A_{0}}\cdot g^{-1}gu_{0}=g{\bar \partial}_{A_{0}}u_{0}=0. \end{eqnarray*}
This completes the proof. \end{proof}

\begin{lemma} Assume the same conditions as lemma 3.4. and write\begin{eqnarray*} A=A(t)=g(t)(A_{0}), u=u(t)=g(t)(u_{0}),  \end{eqnarray*} we have \begin{eqnarray}
({\bar \partial}_{A}-\partial_{A})(\mu(u))= i\cdot L^{*}_{u}(d_{A}u). \end{eqnarray}
\end{lemma}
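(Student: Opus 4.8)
The plan is to express both sides as $1$-forms on $X$ valued in the adjoint bundle $P\times_{Ad}\mathfrak{g}$ and to verify the identity after pairing with an arbitrary tangent vector $v\in TX$. The argument rests on three ingredients: (i) a covariant chain rule $d_{A}(\mu(u))=d\mu(u)\circ d_{A}u$; (ii) the holomorphicity ${\bar\partial}_{A}u=0$ furnished by Lemma 3.4; and (iii) the algebraic identity $L_{u}^{*}=d\mu(u)\,J$ from Lemma 2.3 (equivalently Lemma 2.1).

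First I would establish (i). The map $\mu\circ u\colon P\to\mathfrak{g}$ is equivariant, so it represents a section of $P\times_{Ad}\mathfrak{g}$ whose covariant derivative is $d_{A}(\mu(u))=d(\mu(u))+[A,\mu(u)]$. Using $d_{A}u=du+L_{u}A=du+X_{A(\cdot)}(u)$, the ordinary chain rule gives, for $v\in TX$,
\begin{eqnarray*}
d(\mu(u))(v)=d\mu(u)\big(du(v)\big)=d\mu(u)\big(d_{A}u(v)\big)-d\mu(u)\big(X_{A(v)}(u)\big).
\end{eqnarray*}
Differentiating the equivariance relation $\mu(\exp(t\xi)\cdot x)=\mathrm{Ad}_{\exp(t\xi)}\mu(x)$ at $t=0$ yields $d\mu(x)\big(X_{\xi}(x)\big)=[\xi,\mu(x)]$, hence $d\mu(u)\big(X_{A(v)}(u)\big)=[A(v),\mu(u)]$ and the connection terms cancel exactly, leaving $d_{A}(\mu(u))=d\mu(u)\circ d_{A}u$. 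This is the only place where the moment-map property of $\mu$ enters.

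Next, Lemma 3.4 gives ${\bar\partial}_{A}u=\frac{1}{2}\big(d_{A}u+J\,d_{A}u\,j\big)=0$, i.e. $d_{A}u\circ j=J\circ d_{A}u$ as maps $TX\to u^{*}TM$ (using $J^{-1}=-J$). On the target side we regard $\mu(u)$ as a section of the complexified adjoint bundle, in which multiplication by $i$ plays the role that the fibre complex structure $J$ plays for sections of $\mathcal{F}$, so that ${\bar\partial}_{A}(\mu(u))-{\partial}_{A}(\mu(u))=i\,d_{A}(\mu(u))\circ j$. Combining (i)–(iii), for every $v\in TX$,
\begin{eqnarray*}
\big(({\bar\partial}_{A}-{\partial}_{A})\mu(u)\big)(v)&=&i\,d_{A}(\mu(u))(jv)=i\,d\mu(u)\big(d_{A}u(jv)\big)\\
&=&i\,d\mu(u)\big(J\,d_{A}u(v)\big)=i\,\big(d\mu(u)J\big)\big(d_{A}u(v)\big)=i\,\big(L_{u}^{*}d_{A}u\big)(v),
\end{eqnarray*}
which is the asserted identity. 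The only points that demand care are the chain rule (i) and the bookkeeping of the complexification on the $\mathfrak{g}$-valued side that produces the factor $i$; once these are in place the computation is a direct substitution with no genuine difficulty.
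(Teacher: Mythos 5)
Your proof is correct and follows essentially the same route as the paper's: write $({\bar\partial}_{A}-\partial_{A})\mu(u)=i\,d_{A}(\mu(u))\circ j=i\,d\mu(u)(d_{A}u\circ j)$, use ${\bar\partial}_{A}u=0$ from Lemma 3.4 to replace $d_{A}u\circ j$ by $J\circ d_{A}u$, and invoke $L_{u}^{*}=d\mu(u)J$ from Lemma 2.3. The only difference is that you explicitly justify the covariant chain rule $d_{A}(\mu(u))=d\mu(u)\circ d_{A}u$ via the equivariance of the moment map, a step the paper takes for granted.
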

\begin{proof} By previous definitions of ${\bar \partial}_{A}, {\partial}_{A}$,
we have \begin{equation} ({\bar \partial}_{A}-\partial_{A})(\mu(u))=i\cdot d_{A}(\mu(u))\cdot j=i\cdot d\mu(u)(d_{A}u\cdot j), \end{equation}
where $j$ is the fixed complex structure of $X, i=\sqrt{-1}$.\par On the other hand, by lemma 2.3, we have  \begin{equation} L^{*}_{u}(d_{A}u)=d\mu(u)J(d_{A}u)=d\mu(u)(J\cdot d_{A}u). \end{equation}\\ By lemma 3.4, we know \begin{eqnarray*}
0={\bar \partial}_{A(t)}u(t)&=&{\bar \partial}_{A}u=d_{A}u+J\cdot d_{A}u\cdot j \end{eqnarray*}
\begin{eqnarray}\Leftrightarrow J\cdot d_{A}u&=&d_{A}u\cdot j. \end{eqnarray}
Thus combining (23),(24) and (25) we complete the proof.\end{proof}
\begin{lemma} Suppose that $M$ is a compact K$\ddot{\textmd{a}}hler$ manifold. Let $A_{0}\in \mathcal{A}^{1,1}$, and $u_{0}\in C_{G}^{\infty}(P,M)$ such that ${\bar \partial}_{A_{0}}u_{0}=0$. Assume that $A=A(t)=g(t)(A_{0}), u=u(t)=g(t)(u_{0}), g(t)\in \mathcal{G}^{\mathbb{C}}$. We have
\begin{eqnarray}d_{A}^{*}F_{A}=i(\partial_{A}-{\bar \partial}_{A})\wedge F_{A}. \end{eqnarray}
\end{lemma}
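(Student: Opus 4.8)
The plan is to deduce this identity purely from the generalized K\"ahler identities of Lemma 3.3 together with the $(1,1)$-type statement of Lemma 3.4, exactly as in the classical bundle-valued case of [6, 7, 11]. The point is that, although $u$ takes values in the fibre bundle, the curvature $F_A$ is a genuine vector-bundle-valued form, a section of $\Omega^{1,1}(P\times_{Ad}\mathfrak{g})$, so this computation takes place entirely in the linear setting and the ``generalized'' part of Lemma 3.3 is not really needed here.

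First I would record, using Lemma 3.4, that $F_A=F_{A(t)}\in\Omega^{1,1}$; in particular $\bar\partial_A^{2}=0$, so $A$ induces a holomorphic structure on $P\times_{Ad}\mathfrak{g}$ and the K\"ahler identities of Lemma 3.3 are available on $\Omega^{p,q}(P\times_{Ad}\mathfrak{g})$, namely $\partial_A^{*}=-i[\bar\partial_A,\wedge]$ and $\bar\partial_A^{*}=i[\partial_A,\wedge]$. Next I would invoke the second Bianchi identity $d_AF_A=0$ and decompose it by bidegree: since $F_A$ is of type $(1,1)$ we have $\partial_AF_A\in\Omega^{2,1}$ and $\bar\partial_AF_A\in\Omega^{1,2}$, hence $\partial_AF_A=0$ and $\bar\partial_AF_A=0$ separately.

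Finally I would combine these. Writing $d_A^{*}=\partial_A^{*}+\bar\partial_A^{*}$ and applying the two K\"ahler identities to $F_A$,
\begin{eqnarray*}
d_A^{*}F_A&=&-i[\bar\partial_A,\wedge]F_A+i[\partial_A,\wedge]F_A\\
&=&-i\bigl(\bar\partial_A(\wedge F_A)-\wedge(\bar\partial_AF_A)\bigr)+i\bigl(\partial_A(\wedge F_A)-\wedge(\partial_AF_A)\bigr)\\
&=&i\partial_A(\wedge F_A)-i\bar\partial_A(\wedge F_A)=i(\partial_A-\bar\partial_A)\wedge F_A,
\end{eqnarray*}
where the terms $\wedge(\bar\partial_AF_A)$ and $\wedge(\partial_AF_A)$ vanished by Bianchi and $\wedge F_A$ is a section of $P\times_{Ad}\mathfrak{g}$. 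This is the asserted identity.

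I do not expect a genuine obstacle: the whole content is already contained in Lemmas 3.3 and 3.4, and the remainder is bookkeeping with bidegrees. The only place needing a word of care is the remark that $A\in\mathcal{A}^{1,1}$ really does make $\bar\partial_A$ integrable on the adjoint bundle, so that the K\"ahler identities genuinely apply in this vector-bundle-valued form, and that the Bianchi identity splits by type. (The K\"ahler hypothesis on $M$ and the holomorphy of $u$, used elsewhere in this section, enter this particular lemma only through Lemma 3.4, i.e.\ only to guarantee that $F_A$ is of type $(1,1)$.)
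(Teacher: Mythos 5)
Your proposal is correct and follows essentially the same route as the paper's proof: the Bianchi identity together with the type-$(1,1)$ statement of Lemma 3.4 forces $\partial_A F_A=\bar\partial_A F_A=0$, and then the K\"ahler identities of Lemma 3.3 applied to $F_A$ give the result, with the commutator terms dropping out. Your added observation that $F_A$ lives in the genuinely vector-bundle-valued space $\Omega^{1,1}(P\times_{Ad}\mathfrak{g})$, so only the classical (not the ``generalized'') K\"ahler identities are needed here, is accurate but does not change the argument.
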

\begin{proof} First by Bianchi identity $d_{A}F_{A}=0$ we have \begin{equation}{\partial}_{A}F_{A}=-{\bar \partial}_{A}F_{A}. \end{equation}
By lemma 3.4 we know $F_{A}\in \Omega^{1,1}$, so $ {\partial}_{A}F_{A}\in \Omega^{2,1}, {\bar \partial}_{A}F_{A}\in \Omega^{1,2}$, we have \begin{equation} {\partial}_{A}F_{A}={\bar \partial}_{A}F_{A}=0. \end{equation}
Thus by generalized K$\ddot{\textmd{a}}$hler identities (lemma 3.3) and (28) we have
\begin{eqnarray*} d_{A}^{*}F_{A}&=&({\bar \partial}_{A}^{*}+{\partial}_{A}^{*})F_{A}\\
&=&i({\partial}_{A}-{\bar \partial}_{A})\wedge F_{A}-i\wedge({\partial}_{A}F_{A}-{\bar \partial}_{A}F_{A})\\
&=&i({\partial}_{A}-{\bar \partial}_{A})\wedge F_{A}. \end{eqnarray*}
This completes the proof. \end{proof}
\begin{lemma} Assume the same conditions as lemma 3.6, we have \begin{eqnarray*}
d_{A}^{*}d_{A}=i\wedge({\bar \partial}_{A}{\partial}_{A}-{\partial}_{A}{\bar \partial}_{A})
\end{eqnarray*} and
\begin{eqnarray*}
d_{A}^{*}d_{A}-i\wedge F_{A}=2{\bar \partial}_{A}^{*}{\bar \partial}_{A}.
\end{eqnarray*}
\end{lemma}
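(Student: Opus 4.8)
The plan is to derive both identities from the generalized K\"ahler identities of Lemma 3.3 (${\bar\partial}^*_A = i[\partial_A,\wedge]$, $\partial^*_A = -i[{\bar\partial}_A,\wedge]$) together with elementary type considerations, exactly as in the classical vector-bundle computation. Since $d_A = \partial_A + {\bar\partial}_A$ and hence $d_A^* = \partial_A^* + {\bar\partial}_A^*$, applying $d_A^* d_A$ to a section produces four terms, $\partial_A^*\partial_A + \partial_A^*{\bar\partial}_A + {\bar\partial}_A^*\partial_A + {\bar\partial}_A^*{\bar\partial}_A$. I would first kill the two cross terms: by Lemma 3.3, $\partial_A^*{\bar\partial}_A u = -i({\bar\partial}_A\wedge - \wedge{\bar\partial}_A){\bar\partial}_A u$, and since $\wedge$ annihilates a $(0,1)$-form this equals $i\wedge{\bar\partial}_A^2 u$, which vanishes because $A\in\mathcal{A}^{1,1}$ is integrable (this is the point at which Lemma 3.4, $F_A\in\Omega^{1,1}$, enters); the term ${\bar\partial}_A^*\partial_A u$ vanishes symmetrically using $\partial_A^2 u = 0$.

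For the remaining two terms, Lemma 3.3 and the vanishing of $\wedge$ on $(1,0)$- and $(0,1)$-forms give $\partial_A^*\partial_A u = i\wedge{\bar\partial}_A\partial_A u$ and ${\bar\partial}_A^*{\bar\partial}_A u = -i\wedge\partial_A{\bar\partial}_A u$; adding them yields the first identity $d_A^*d_A = i\wedge({\bar\partial}_A\partial_A - \partial_A{\bar\partial}_A)$. For the second identity the one extra ingredient is the curvature relation ${\bar\partial}_A\partial_A + \partial_A{\bar\partial}_A = F_A\cdot(\,\cdot\,)$ on sections, which holds because $F_A\in\Omega^{1,1}$ forces $d_A^2$ to have no $(2,0)$ or $(0,2)$ component; concretely this is $d_A^2 = F_A$ read off in bidegree $(1,1)$ and interpreted through the infinitesimal action $L_u$ of Section 2. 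Substituting ${\bar\partial}_A\partial_A = F_A - \partial_A{\bar\partial}_A$ into the first identity gives
$$d_A^*d_A = i\wedge F_A - 2i\wedge\partial_A{\bar\partial}_A = i\wedge F_A + 2{\bar\partial}_A^*{\bar\partial}_A,$$
where the last step uses ${\bar\partial}_A^*{\bar\partial}_A = -i\wedge\partial_A{\bar\partial}_A$ from the previous paragraph; rearranging gives $d_A^*d_A - i\wedge F_A = 2{\bar\partial}_A^*{\bar\partial}_A$.

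The main obstacle is bookkeeping rather than anything conceptual: one must check that the generalized K\"ahler identities and the Bianchi/integrability relations continue to make sense when the ``coefficients'' take values in $u^*TM$ for a map $u$ into the K\"ahler manifold $M$ rather than in a genuine vector bundle, i.e.\ that the compositions ${\bar\partial}_A^2 u$, $\partial_A^2 u$ and $d_A^2 u$ are correctly identified with the curvature $F_A$ acting via $L_u$. Once the formalism of Section 2 and the consequences of Lemma 3.4 are granted, each step above is word-for-word the classical computation, so (as with Lemma 3.3) the proof ultimately reduces to the observation that our situation is essentially the same as the vector-bundle-valued case.
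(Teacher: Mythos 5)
Your proposal is correct and follows essentially the same route as the paper: expand $d_A^*d_A=({\bar\partial}_A^*+\partial_A^*)({\bar\partial}_A+\partial_A)$, kill the cross terms by type and integrability, apply the generalized K\"ahler identities of Lemma 3.3 to the two surviving terms, and then bring in $i\wedge d_A^2=i\wedge F_A$ to get the second identity (the paper phrases this last step as $\partial_A^*\partial_A-{\bar\partial}_A^*{\bar\partial}_A=i\wedge F_A$ rather than substituting ${\bar\partial}_A\partial_A=F_A-\partial_A{\bar\partial}_A$, but the algebra is equivalent). Your version is in fact slightly more careful than the paper's, since you make explicit where $F_A\in\Omega^{1,1}$ and the vanishing of $\wedge$ on $(1,0)$- and $(0,1)$-forms are used.
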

\begin{proof} By generalized K$\ddot{\textmd{a}}$hler identities and lemma 3.4, \begin{eqnarray*}
d^{*}_{A}d_{A}&=&({\bar \partial}^{*}_{A}+{\partial}^{*}_{A})({\bar \partial}_{A}+{\partial}_{A})\\
&=&i\wedge({\bar \partial}_{A}{\partial}_{A}-{\partial}_{A}{\bar \partial}_{A}),
\end{eqnarray*}
and we also have \begin{eqnarray*} d^{*}_{A}d_{A}&=&{\partial}^{*}_{A}{\partial}_{A}+{\bar \partial}^{*}_{A}{\bar \partial}_{A}. \end{eqnarray*}
Then
\begin{eqnarray*}
{\partial}^{*}_{A}{\partial}_{A}-{\bar \partial}^{*}_{A}{\bar \partial}_{A}&=&i\wedge{\bar \partial}_{A}{\partial}_{A}+i\wedge{\partial}_{A}{\bar \partial}_{A}\\
&=&i\wedge(d^{2}_{A})\\
&=&i\wedge F_{A}. \end{eqnarray*}
Therefore we have \begin{eqnarray*}d_{A}^{*}d_{A}-i\wedge F_{A}=2{\bar \partial}_{A}^{*}{\bar \partial}_{A}. \end{eqnarray*} This completes the proof of lemma 3.7. \end{proof}

Now take any $g\in \mathcal{G}^{\mathbb{C}}$ such that $g^{*}g=h$ (for example $g=h^{1/2}$). Since $h$ solve (18), we have \begin{equation}
\frac{\partial g}{\partial t}g^{-1}+g^{*}\frac{\partial g^{*}}{\partial t}=-2i[\wedge F_{g(A_{0})}+\mu(u)-c]. \end{equation}
Here we use a fact that the moment map $\mu$ is equivariant, i.e., $g\mu(u_{0})g^{-1}=\mu(g(u_{0}))=\mu(u)$.
Then as [8], at the time $t$,
\begin{eqnarray*} \frac{\partial A}{\partial t}&=&\frac{\partial}{\partial\epsilon}|_{\epsilon=0}(\partial_{(g+\epsilon\partial_{t}g)A_{0}}+{\bar\partial}_{(g+\epsilon\partial_{t}g)A_{0}})\\
&=&\partial_{A}({g^{*}}^{-1}\partial_{t}g^{*})+{\bar\partial}_{A}(g\partial_{t}g^{-1}).\end{eqnarray*}
So we have
\begin{eqnarray*}
\frac{\partial A}{\partial t}&=&{\partial}_{A}(g^{*}\partial_{t}g^{*})-{\bar \partial}_{A}(\partial_{t}g g^{-1})\\
&=&-{ \partial}_{A}i[\wedge F_{A}+\mu(u)-c]-\frac{1}{2}{\partial}_{A}(\partial_{t}g g^{-1})+\frac{1}{2}{\partial}_{A}(g^{*}\partial_{t}g^{*})+\\
& & {\bar \partial}_{A}i[\wedge F_{A}+\mu(u)-c]+\frac{1}{2}{\bar \partial}_{A}(g^{*}\partial_{t}g^{*})-\frac{1}{2}{\bar \partial}_{A}(\partial_{t}g g^{-1}) \\
 & =& i({\bar \partial}_{A}-{\partial}_{A})[\wedge F_{A}+\mu(u)-c]+d_{A}(\alpha(t))\\
& =&-i(\partial_{A}-{\bar \partial}_{A})\wedge F_{A}+i({\bar \partial}_{A}-\partial_{A})(\mu(u)-c)+d_{A}(\alpha(t)).\\
 \end{eqnarray*}
Due to lemma 3.5 and lemma 3.6, we have
 \begin{eqnarray}\frac{\partial A}{\partial t}&=&-d_{A}^{*}F_{A}-L^{*}_{u}(d_{A}u)+d_{A}(\alpha(t)),\end{eqnarray}
 where $\alpha(t)$ is defined by $\alpha(t)=\frac{1}{2}(g^{*}\frac{\partial g^{*}}{\partial t}-\partial_{t}g g^{-1})$.\\

 Using (29) we obtain
 \begin{eqnarray*}
\frac{\partial u}{\partial t}& = & \partial_{t}g g^{-1}u\\
 & =& -i[\wedge F_{A}+\mu(u)-c]u-\frac{1}{2}(g^{*}\frac{\partial g^{*}}{\partial t}-\partial_{t}g g^{-1})u\\
 & =&-i\wedge F_{A}u-iL_{u}(\mu(u)-c)-\alpha(t)u.
\end{eqnarray*}
By lemma 2.3, lemma 3.4 and lemma 3.7, we have
\begin{eqnarray} \frac{\partial u}{\partial t}& =&-d_{A}^{*}d_{A}u-(d\mu)^{*}(\mu(u)-c)-\alpha(t)u. \end{eqnarray}

Assume $h$ is the solution of (18), let $g=h^{\frac{1}{2}}$. The corresponding pair $(\tilde{A}(t), \tilde{u}(t))=(g(A_{0}), g(u_{0}))$ thus is a solution to (30) and (31). Through a gauge transformation of the equivalent flow (30), (31), we prove the local existence of Yang-Mills-Higgs heat flow. \par
Now let $S(t)$ be the unique smooth solution to the following initial value problem:
\begin{eqnarray}
\frac{d S}{dt}\cdot S^{-1}=- \alpha(t), S(0)=I, \end{eqnarray}
where \begin{eqnarray*}
\alpha(t)=\frac{1}{2}(g^{*}\frac{\partial g^{*}}{\partial t}-\partial_{t}g g^{-1}). \end{eqnarray*}
Let \begin{eqnarray*}d_{A}=S^{-1}\cdot d_{\tilde{A}}\cdot S, u=S^{-1}\tilde{u}, \end{eqnarray*}
we have \begin{eqnarray*}
S^{-1}\cdot{d^{*}_{\tilde A}F_{\tilde A}}\cdot S=d^{*}_{A}F_{A}; \\
S^{-1}\cdot L^{*}_{\tilde{u}}(d_{\tilde A}\tilde u)\cdot S= L^{*}_{u}(d_{A}u), \end{eqnarray*}
and \begin{eqnarray*}
d_{\tilde A}(\alpha)=d_{\tilde A}\cdot \alpha-\alpha\cdot d_{\tilde A}.\end{eqnarray*}
Combining these equalities with (32), we have
\begin{eqnarray*}
\frac{\partial A}{\partial t}&=&-d_{A}^{*}F_{A}-L^{*}_{u}(d_{A}u);\\
\frac{\partial u}{\partial t}&=&-d_{A}^{*}d_{A}u-(d\mu)^{*}(\mu(u)-c).
\end{eqnarray*}
We ultimately get a smooth solution on $X\times [0, \epsilon)$. This completes the proof of the following local existence theorem:
\begin{theorem}[Local existence] Assume that both the base manifold $X$ and the fiber $M$ are compact K$\ddot{\textmd{a}}$hler manifolds, where $M$ supports a Hamiltonian $G$-group action, and $P$ is a holomorphic G-principal bundle over $X$. Let $A_{0}\in \mathcal{A}^{1,1}$ be a given smooth connection on $P$ with curvature $F_{A_{0}}$ of type (1,1) and let $u_{0}\in\mathcal{L}$ be holomorphic, i.e. ${\bar \partial}_{A_{0}}u_{0}=0$. Then there exist a positive constant $\epsilon >0$ and a smooth solution $(A(x,t), u(x,t))$ such that $(A, u)$ solve the Yang-Mills-Higgs heat flow (5),(6) in $X \times [0, \epsilon)$ with initial values $A(x, 0)=A_{0}$ and $u(x, 0)=u_{0}$.
\end{theorem}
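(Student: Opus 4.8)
The flow (5)--(6) is the (negative) gradient flow of $\mathcal{YMH}$, and because $\mathcal{YMH}$ is $\mathcal{G}$-invariant (Proposition 2.5) the system is only weakly parabolic; so rather than attacking it directly I would follow Donaldson's device, already prepared above, and pass to the flow of Hermitian metrics. The first step is to produce a short-time solution $h=h(t)$, $h(0)=I$, of the evolution equation (18), with $h(t)$ a positive self-adjoint endomorphism of the underlying unitary bundle. The principal part of (18) is $-\triangle_{A_{0}}$ acting on endomorphisms, which is strictly elliptic; the term $-i[h\hat{F}_{A_{0}}+\hat{F}_{A_{0}}h+2(\mu(u_{0})-c)h]$ is zeroth order (linear) in $h$, and the remaining term $2i\wedge(\bar\partial_{A_{0}}h\,h^{-1}\partial_{A_{0}}h)$ is quadratic in $h$ and first order in its derivatives. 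Hence (18) is a quasilinear strictly parabolic equation, and by standard parabolic PDE theory (cf. [6, 8]) it admits a unique smooth solution on $X\times[0,\epsilon)$ for some $\epsilon>0$, with $h(t)$ staying positive self-adjoint for $\epsilon$ small.

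Next I would transport this back to the space of pairs. Pick $g=g(t)$ with $g^{*}g=h$, for instance $g=h^{1/2}$, and set $\tilde A(t)=g(t)(A_{0})$, $\tilde u(t)=g(t)(u_{0})$ via the complex gauge action (15). By Lemma 3.4, $\bar\partial_{\tilde A}\tilde u=0$ and $F_{\tilde A}\in\Omega^{1,1}$ for every $t$, so the hypotheses of Lemmas 3.3, 3.5, 3.6 and 3.7 remain in force along the flow. Differentiating the transformation formulas (15)--(16) in $t$, substituting the rewritten metric equation (29) (which uses the equivariance $g\mu(u_{0})g^{-1}=\mu(g(u_{0}))=\mu(u)$), and converting each $\wedge F$ and $\mu$ term into an intrinsic operator via the generalized K$\ddot{\textmd{a}}$hler identities (Lemma 3.3) together with Lemmas 3.5--3.7, one arrives precisely at (30)--(31): the Yang--Mills--Higgs flow for $(\tilde A,\tilde u)$ corrected by the terms $d_{\tilde A}(\alpha(t))$ and $-\alpha(t)\tilde u$, where $\alpha(t)=\frac{1}{2}(g^{*}\partial_{t}g^{*}-(\partial_{t}g)\,g^{-1})$ is a skew-Hermitian endomorphism (an infinitesimal unitary gauge transformation), smooth in $t$ since $h$ is.

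Finally I would remove this gauge term. Solve the linear ODE (32), $\frac{dS}{dt}S^{-1}=-\alpha(t)$, $S(0)=I$; as $\alpha(t)$ is smooth in $t$ and skew-Hermitian, $S(t)$ is a well-defined smooth one-parameter family in the unitary gauge group $\mathcal{G}$. Put $d_{A}=S^{-1}d_{\tilde A}S$ and $u=S^{-1}\tilde u$. Since $d_{A}^{*}F_{A}$, $L_{u}^{*}(d_{A}u)$, $d_{A}^{*}d_{A}u$ and $(d\mu)^{*}(\mu(u)-c)$ are gauge-natural they transform by conjugation by $S$, while $\partial_{t}(S^{-1}d_{\tilde A}S)$ and $\partial_{t}(S^{-1}\tilde u)$ produce exactly the terms that cancel the $d_{\tilde A}(\alpha)$ and $-\alpha\tilde u$ corrections in (30)--(31) (using $d_{\tilde A}(\alpha)=d_{\tilde A}\cdot\alpha-\alpha\cdot d_{\tilde A}$ and (32)). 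Hence $(A,u)$ solves $\frac{\partial A}{\partial t}=-d_{A}^{*}F_{A}-L_{u}^{*}(d_{A}u)$ and $\frac{\partial u}{\partial t}=-d_{A}^{*}d_{A}u-(d\mu)^{*}(\mu(u)-c)$, which is (5)--(6) in the notation of Section 2 (here $d_{A}^{*}F_{A}=D_{A}^{*}F_{A}$ and $d_{A}^{*}d_{A}u=\nabla_{A}^{*}d_{A}u$). Since $h(0)=I$ and $S(0)=I$ one has $A(\cdot,0)=A_{0}$ and $u(\cdot,0)=u_{0}$, and smoothness in $(x,t)$ is inherited from parabolic regularity for $h$; this yields the asserted $\epsilon>0$ and smooth solution on $X\times[0,\epsilon)$.

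The \textbf{main obstacle} is the very first step: verifying carefully that (18) is strictly parabolic and well posed despite the quadratic first-order nonlinearity $2i\wedge(\bar\partial_{A_{0}}h\,h^{-1}\partial_{A_{0}}h)$ and the constraint that $h$ remain positive self-adjoint. Everything afterwards is bookkeeping: differentiating the complex-gauge transformation formulas, checking via Lemmas 3.4--3.7 and the equivariance of $\mu$ that the algebraic identities used to pass from (18) to (30)--(31) hold throughout the short-time interval, solving the auxiliary ODE (32), and matching the resulting system with (5)--(6).
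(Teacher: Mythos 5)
Your proposal follows essentially the same route as the paper: short-time existence for the metric flow (18) by standard parabolic theory, transport to the pair $(\tilde A,\tilde u)=(g(A_{0}),g(u_{0}))$ with $g=h^{1/2}$ using Lemmas 3.3--3.7 and the equivariance of $\mu$ to obtain (30)--(31), and then the unitary gauge correction $S(t)$ from the ODE (32) to remove the $\alpha(t)$ terms and recover (5)--(6). The argument is correct and matches the paper's proof step for step.
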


\subsection{Global existence}
$\hspace{15pt}$Recall the equivalent heat equation (17)
\begin{eqnarray*}
\frac{\partial H_{t}}{\partial t}=-2iH_{t}[\hat{F_{t}}+\mu(u)-c]. \end{eqnarray*}
In order to study the uniqueness of heat flow, we also introduce a distance between metrics as in [6].\\

$\hspace{-17pt}$$\textbf{Definition 3.1.}$ For any two metrics $H, K$ on the principle $G$-bundle $P$, set \begin{eqnarray*}
\tau(H,K)&=&Tr(H^{-1}K);\\
\sigma(H,K)&=&\tau(H,K)+\tau(K,H)-2rank(G). \end{eqnarray*}
It follows immediately from that the inequality \begin{eqnarray}
\lambda+\lambda^{-1}\geq2, \forall \lambda\geq0, \end{eqnarray}
that $\sigma(H,K)\geq 0$ with equality if and only if $H=K$.
Following [6], we can prove a lemma which is exactly the same as [6, Prop. 12]
\begin{lemma}If $H_{t}, K_{t}$ are two solutions of the evolution equation (17), then writing $\sigma=\sigma(H,K)$, we have \begin{eqnarray}
\frac{\partial \sigma}{\partial t}+\Delta \sigma \leq 0. \end{eqnarray}
\end{lemma}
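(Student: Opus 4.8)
The plan is to mimic Donaldson's parabolic maximum principle argument \cite{6}, adapting it to the present setting where the extra zeroth-order term $\mu(u)-c$ appears in the evolution equation (17). First I would set up notation: for the two solutions $H_t,K_t$ of (17) write $\sigma=\sigma(H,K)=\tau(H,K)+\tau(K,H)-2\,\mathrm{rank}(G)$ with $\tau(H,K)=\mathrm{Tr}(H^{-1}K)$, and compute $\partial_t\tau(H,K)$ directly from (17). Using $\partial_t H=-2iH[\hat F_H+\mu(u)-c]$ and the analogous equation for $K$, together with $\partial_t(H^{-1})=-H^{-1}(\partial_t H)H^{-1}$, one gets
\begin{eqnarray*}
\partial_t\tau(H,K)=\mathrm{Tr}\big(H^{-1}K\cdot 2i[\hat F_H+\mu(u)-c]\big)-\mathrm{Tr}\big(H^{-1}K\cdot 2i[\hat F_K+\mu(u)-c]\big),
\end{eqnarray*}
and here is the first crucial point: since $u$ is the \emph{same} section in both flows (both $H_t$ and $K_t$ act on the same holomorphic data, and $\mu$ is equivariant so $\mu(u)$ is determined by the endomorphism but the $\mu(u)-c$ contributions are conjugate in a matched way), the Higgs term $\mu(u)-c$ cancels in the difference, leaving exactly the expression $2i\,\mathrm{Tr}(H^{-1}K(\hat F_H-\hat F_K))$ that appears in Donaldson's pure Hermitian--Einstein case. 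I would state this cancellation carefully as the one genuinely new ingredient.

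Next I would recall the Chern--Weil-type identity relating $\hat F_H-\hat F_K$ to the endomorphism $s=H^{-1}K$ (equivalently $h=H_0^{-1}H$, $k=H_0^{-1}K$): namely $\hat F_K-\hat F_H=\wedge\bar\partial_{A_H}(s^{-1}\partial_{A_H}s)$ in the notation of (16). Plugging this in and integrating by parts — exactly as in \cite[Prop.~12]{6} — produces $\partial_t\tau(H,K)+\Delta\tau(H,K)=-|\nabla s|^2$-type nonpositive terms (more precisely a term of the form $-\mathrm{Tr}(s^{-1}\nabla s\wedge s^{-1}\nabla s)\le 0$ by positivity of $s$), hence $(\partial_t+\Delta)\tau(H,K)\le 0$. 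By symmetry $(\partial_t+\Delta)\tau(K,H)\le 0$, and adding the two and using that the constant $2\,\mathrm{rank}(G)$ is annihilated by $\partial_t+\Delta$ gives $(\partial_t+\Delta)\sigma\le 0$, which is (36). I would present the computation only to the level of indicating which terms are manifestly $\le 0$ and cite \cite{6,8} for the algebraic manipulations, since they are routine once the Higgs cancellation is in hand.

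The main obstacle I anticipate is precisely verifying that the $\mu(u)-c$ contributions cancel in the difference $\partial_t\tau(H,K)$ — one must be sure that the two solutions are driven by genuinely the same Higgs data and that equivariance of $\mu$ is used correctly, so that no residual $s$-dependent zeroth-order term survives to spoil the sign. (If such a term did survive it would need to be controlled by a maximum-principle-compatible estimate, but in fact the structure of (17) is arranged so that it does not.) A secondary, purely technical point is justifying the integration by parts and the pointwise inequality $\mathrm{Tr}(s^{-1}\nabla s\, s^{-1}\bar\nabla s)\le 0$ for positive self-adjoint $s$; this is standard and I would simply reference \cite{6}. Once $(\partial_t+\Delta)\sigma\le 0$ is established, the uniqueness of the flow follows immediately from the ordinary parabolic maximum principle (since $\sigma\ge0$ with equality iff $H=K$, and $\sigma\equiv 0$ at $t=0$), which I would note as the intended application of the lemma.
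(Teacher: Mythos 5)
Your overall strategy (adapt Donaldson's computation for $\sigma=\tau(H,K)+\tau(K,H)-2\,\mathrm{rank}(G)$ and conclude by the parabolic maximum principle) is the right one, and it is in fact all the paper itself offers: no proof is given, only the remark that the lemma ``is exactly the same as [6, Prop.\ 12]''. But the step you single out as the one genuinely new ingredient --- the cancellation of the $\mu(u)-c$ terms in $\partial_t\tau(H,K)$ --- is precisely where the argument breaks down. In equation (17) the section $u$ is not fixed data shared by the two flows: it is $u(t)=g(t)(u_0)$ with $g^{*}g=h=H_0^{-1}H$, which is exactly how the paper passes from (17) to (18) and to (29) via the equivariance identity $\mu(u)=g\mu(u_0)g^{-1}$. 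Two distinct solutions $H_t$ and $K_t$ therefore carry two distinct sections $u_H=g_H(u_0)$ and $u_K=g_K(u_0)$, and your difference is really $2i\,\mathrm{Tr}\bigl(H^{-1}K(\hat F_H-\hat F_K)\bigr)+2i\,\mathrm{Tr}\bigl(H^{-1}K(\mu(u_H)-\mu(u_K))\bigr)$; the second term does not vanish. This is already visible in the model cases you are generalizing: in Simpson [16, Lemma 6.3] and in Hong [8] the Higgs/vortex term ($[\theta,\theta^{*_H}]$, resp.\ $\phi\otimes\phi^{*_H}$) likewise depends on the metric, does \emph{not} cancel between the two flows, and its contribution to $(\partial_t+\Delta)\sigma$ must be shown nonpositive by a separate positivity argument (an eigenvalue computation in Simpson; in the present nonlinear setting the natural substitute is the monotonicity of $t\mapsto\langle\mu(e^{its}x),s\rangle$, whose derivative is $|L_{e^{its}x}s|^{2}\ge 0$ by Lemma 2.1). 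So the residual zeroth-order term you hoped to avoid is really there, and controlling its sign is the actual content of the lemma beyond Donaldson's Proposition 12.

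The remainder of your sketch --- the identity $\hat F_K-\hat F_H=\wedge\bar\partial_{A}(s^{-1}\partial_{A}s)$, the sign of the resulting first-order term for positive self-adjoint $s$, symmetrization in $H$ and $K$, and the maximum principle giving uniqueness --- is standard and reasonably deferred to [6] and [8]. To repair the proof, replace the claimed cancellation by a Simpson--Hong type estimate showing that the combined moment-map contribution to $(\partial_t+\Delta)\bigl(\tau(H,K)+\tau(K,H)\bigr)$ is nonpositive; be aware that the paper does not supply this step either.
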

Using lemma 3.9, we can prove local uniqueness of the heat equation (17)
\begin{theorem}(Local uniqueness) Any two smooth solutions $H_{t}, K_{t}$, which are defined for $0\leq t< \epsilon$, are continuous at $t=0$, and we have the same initial condition $H_{0}=K_{0}$, agree for all $t\in [0, \epsilon)$.
\end{theorem}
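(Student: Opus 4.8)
The plan is to derive local uniqueness from the parabolic inequality in Lemma 3.9 via a maximum-principle argument. Suppose $H_t$ and $K_t$ are two smooth solutions of (17) on $X\times[0,\epsilon)$, continuous at $t=0$, with $H_0=K_0$. Set $\sigma(t)=\sigma(H_t,K_t)\geq 0$, with $\sigma=0$ exactly where $H_t=K_t$. By Lemma 3.9 we have $\frac{\partial\sigma}{\partial t}+\Delta\sigma\leq 0$ on $X\times(0,\epsilon)$. Since $X$ is compact, the only thing preventing an immediate conclusion is the behavior at $t=0$: we know $\sigma(\cdot,0)=0$ by the common initial condition and continuity, so we want to integrate the differential inequality to show $\sigma\equiv 0$ thereafter.

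The key steps, in order, are as follows. First, integrate the inequality over $X$: since $X$ is a compact K\"ahler manifold without boundary, $\int_X\Delta\sigma\,dvol=0$, so $\frac{d}{dt}\int_X\sigma\,dvol\leq 0$, meaning $t\mapsto\int_X\sigma(\cdot,t)\,dvol$ is nonincreasing on $(0,\epsilon)$. Second, use continuity at $t=0$ to pass to the limit: $\lim_{t\to 0^+}\int_X\sigma(\cdot,t)\,dvol=\int_X\sigma(\cdot,0)\,dvol=0$. Third, combine monotonicity with nonnegativity of $\sigma$: for every $t\in[0,\epsilon)$ we get $0\leq\int_X\sigma(\cdot,t)\,dvol\leq\lim_{s\to 0^+}\int_X\sigma(\cdot,s)\,dvol=0$, hence $\int_X\sigma(\cdot,t)\,dvol=0$. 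Fourth, since $\sigma\geq 0$ and is continuous (indeed smooth for $t>0$), this forces $\sigma(\cdot,t)\equiv 0$ on $X$ for all $t\in[0,\epsilon)$; by the equality case recorded after Definition 3.1, this means $H_t=K_t$ for all such $t$.

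Finally I would translate this statement back in terms of the flows: since $H_t=H_0 h(t)$ and $K_t=K_0 k(t)=H_0 k(t)$ for the associated positive self-adjoint endomorphisms, $H_t=K_t$ is the same as $h(t)=k(t)$, so the two solutions of the equivalent equation (18) — and therefore the corresponding equivalent flows (30),(31) and ultimately the Yang-Mills-Higgs heat flow (5),(6) obtained via the gauge transformation $S(t)$ — coincide on $[0,\epsilon)$.

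The main obstacle, and the only delicate point, is the passage to the limit at $t=0$: one must be careful that the solutions are genuinely continuous up to $t=0$ in a topology strong enough that $\int_X\sigma(\cdot,t)\,dvol\to 0$, and that the differential inequality of Lemma 3.9 — which is stated for $t>0$ where everything is smooth — can be coupled with this boundary continuity. This is handled exactly as in Donaldson [6]: the hypothesis "continuous at $t=0$" is precisely what makes $\int_X\sigma\,dvol$ a continuous nonincreasing function on $[0,\epsilon)$ vanishing at the left endpoint, which is all the argument needs. Everything else is the standard compact-manifold maximum principle and requires no further computation.
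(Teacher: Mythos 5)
Your proposal is correct and follows essentially the same route as the paper, whose entire proof is ``using Lemma 3.9, apply the maximum principle to $\sigma(H_t,K_t)$''; you have simply written out the details, replacing the pointwise parabolic maximum principle by its integrated form ($\int_X\Delta\sigma\,dvol=0$ on the closed manifold $X$, so $\int_X\sigma$ is nonincreasing, vanishes at $t=0$ by continuity, and hence vanishes identically since $\sigma\geq 0$ with equality iff $H=K$). This is a legitimate and if anything slightly more elementary implementation of the same idea, so no gap.
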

\begin{proof} Using lemma 3.9, apply the maximum principle to $\sigma(H_{t}, K_{t})$. \end{proof}

Now we prepare to prove global existence of the heat flow (5),(6). Recall that we take an invariant inner product on $\mathfrak{g}$, which induces a norm $|\cdot|$.  Then set \begin{equation}
\hat{e}=|\wedge F_{A}+\mu(u)-c|^{2}. \end{equation}
By Theorem 3.8, assume that $(A,u)$ is a solution of the Yang-Mills-Higgs heat flow (5),(6) on $X\times[0, T)$:
\begin{eqnarray*}
\frac{\partial A}{\partial t}&=&-L^{*}_{u}d_{A}u-D^{*}_{A}F_{A};\\
\frac{\partial u}{\partial t}&=&-\nabla^{*}_{A}d_{A}u-(d\mu)^{*}(\mu(u)-c),
\end{eqnarray*}
with curvature $F_{A}\in \Omega^{1,1}$ and ${\bar \partial}_{A}u=0$ on $X\times [0, T)$, then we have
\begin{theorem} Assume that both the base manifold $X$ and the fiber $M$ are compact K$\ddot{\textmd{a}}$hler manifolds, where $M$ supports a Hamiltonian $G$-group action, and $P$ is a holomorphic G-principal bundle over $X$. Let $A_{0}\in \mathcal{A}^{1,1}$ be a given smooth connection on $P$ with curvature $F_{A_{0}}$ of type (1,1) and let $u_{0}\in\mathcal{L}$ be holomorphic, i.e. ${\bar \partial}_{A_{0}}u_{0}=0$. Suppose that $(A, u)$ is local solution of Yang-Mills-Higgs heat flow (5),(6) in $X \times [0, \epsilon)$ with initial values $A_{0}$ and $u_{0}$, then we have
\begin{equation} (\frac{\partial}{\partial t}+\triangle)\hat{e}\leq 0, \end{equation}
where $\triangle=\nabla^{*}_{A}\nabla_{A}$ is the Laplacian.
\end{theorem}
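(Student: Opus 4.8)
The plan is to exploit the equivalence established in the local existence section between the Yang-Mills-Higgs heat flow $(5),(6)$ and the metric flow $(17)$, $\partial_t H_t = -2iH_t[\hat F_t + \mu(u) - c]$, together with the identities of Lemmas 3.5, 3.6 and 3.7. Because $A_0 \in \mathcal{A}^{1,1}$ and $\bar\partial_{A_0}u_0 = 0$, Lemma 3.4 guarantees that along the flow we have $F_{A}\in\Omega^{1,1}$ and $\bar\partial_A u = 0$, so we may freely use $d_A^\ast F_A = i(\partial_A - \bar\partial_A)\wedge F_A$ (Lemma 3.6), $(\bar\partial_A - \partial_A)\mu(u) = i\, L_u^\ast(d_A u)$ (Lemma 3.5), and $d_A^\ast d_A - i\wedge F_A = 2\bar\partial_A^\ast\bar\partial_A$ (Lemma 3.7). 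Set $\Phi := \wedge F_A + \mu(u) - c$, so that $\hat e = |\Phi|^2$.

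The first main step is to compute $\partial_t \Phi$ along the flow. Using the equivalent flow equations $(30)$ and $(31)$ (or directly the metric flow $(17)$, where $\partial_t \hat F_t$ is governed by a Laplacian-type operator), I would differentiate $\wedge F_A$ in $t$ via $\partial_t F_A = D_A(\partial_t A)$ and contract, and differentiate $\mu(u)$ via $d\mu(\partial_t u)$. After substituting $\partial_t A = -d_A^\ast F_A - L_u^\ast d_A u$ and $\partial_t u = -d_A^\ast d_A u - (d\mu)^\ast(\mu(u)-c)$, and repeatedly applying the generalized Kähler identities (Lemma 3.3) together with Lemmas 3.5--3.7 to commute $\wedge$ past $\partial_A, \bar\partial_A$, the terms should reorganize into the form
\begin{equation*}
\frac{\partial \Phi}{\partial t} = -\triangle \Phi + (\text{lower-order terms that vanish or are controlled}),
\end{equation*}
i.e. $\Phi$ itself satisfies (up to curvature correction terms) a heat equation $(\partial_t + \triangle)\Phi = 0$, exactly as in the Hermitian-Einstein case treated by Donaldson and Simpson; the presence of the moment map $\mu(u)$ contributes terms of the shape $d\mu(L_u(\cdots))$ which, by Lemma 2.3 ($L_u = -J(d\mu)^\ast$, $L_u^\ast = d\mu J$) and the holomorphicity $J d_A u = d_A u \cdot j$, combine coherently with the curvature terms rather than obstructing them.

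The second step is the Kato-type inequality: from $(\partial_t + \triangle)\Phi = 0$ (or $\leq$ in the appropriate sense after discarding a manifestly good term) one gets
\begin{equation*}
\left(\frac{\partial}{\partial t} + \triangle\right)|\Phi|^2 = 2\langle \Phi, (\partial_t + \triangle)\Phi\rangle - 2|\nabla_A \Phi|^2 \leq -2|\nabla_A\Phi|^2 \leq 0,
\end{equation*}
using the Bochner-type identity $\triangle|\Phi|^2 = 2\langle \triangle\Phi,\Phi\rangle - 2|\nabla_A\Phi|^2$ with $\triangle = \nabla_A^\ast\nabla_A$. This yields $(37)$.

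The step I expect to be the main obstacle is the first one: carefully carrying out the commutator bookkeeping in $\partial_t\Phi$ so that all the $O(F_A)\cdot\Phi$ and $O(d_A u)^2$ remainder terms either cancel identically or assemble into $-\triangle\Phi$ plus something non-positive after pairing with $\Phi$. In the pure Yang-Mills setting this is the classical computation that $\hat F$ evolves by the heat equation; here one must check that the Higgs contributions — in particular the cross terms between $d_A^\ast d_A u$ and $d\mu$, and between $L_u^\ast d_A u$ and $\wedge F_A$ — respect the same structure, which is precisely where the Kähler condition on $M$ (so that $J$ is integrable and $\bar\partial_A u = 0$ is preserved) and the equivariance of $\mu$ are essential. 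Once that identity is in hand, the maximum-principle consequence $(37)$ is immediate.
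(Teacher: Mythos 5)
Your plan coincides with the paper's own proof: the paper reduces Theorem 3.11 to an exact identity (Lemma 3.12) obtained by precisely the computation you outline --- substituting the flow equations (5),(6) into $\partial_t|\wedge F_A+\mu(u)-c|^2$, applying Lemmas 3.5--3.7, Lemma 2.3 and the generalized K\"ahler identities, and then combining with the Bochner-type identity $\tfrac{1}{2}\triangle\hat{e}=\mathrm{Re}\langle\nabla_A^*\nabla_A\Phi,\Phi\rangle-|\nabla_A\Phi|^2$ for $\Phi=\wedge F_A+\mu(u)-c$. The bookkeeping you flag as the main obstacle resolves exactly as you predict: the cross terms between the curvature and Higgs contributions assemble into a single nonpositive square, giving $(\partial_t+\triangle)\hat{e}=-2|\nabla_A\Phi|^2-2|L_u\Phi|^2\leq 0$.
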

To prove this theorem, it suffices to prove an equality
\begin{lemma} Assume the same conditions as in Theorem 3.11, we have
\begin{eqnarray*}
 (\frac{\partial}{\partial t}+\triangle)\hat{e}=-2|\nabla_{A}(\wedge F_{A}+\mu(u)-c)|^{2}-2|L_{u}(\wedge F_{A}+\mu(u)-c)|^{2}. \end{eqnarray*}
\end{lemma}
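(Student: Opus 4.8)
Set $\Phi:=\wedge F_{A}+\mu(u)-c$; viewing $\wedge F_{A}$ and $\mu(u)$ as $G$-equivariant maps $P\to\mathfrak{g}$, this is a section of $\mathrm{ad}\,P$, so $\hat{e}=|\Phi|^{2}$. All three quantities $\hat{e}$, $|\nabla_{A}\Phi|^{2}$, $|L_{u}\Phi|^{2}$ are gauge invariant, and by Theorem 3.8 together with (32) the flow (5), (6) differs from the equivalent flow (30), (31) only by the path $S(t)$ in $\mathcal{G}^{\mathbb{C}}$; hence we may assume $(A,u)$ solves (5), (6) exactly, and then by Lemma 3.4 the conditions $F_{A}\in\Omega^{1,1}$ and $\bar{\partial}_{A}u=0$ persist for all $t$, so Lemmas 3.3, 3.5, 3.6, 3.7 are available along the flow. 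The plan is to reduce everything to the pointwise evolution identity
\begin{eqnarray*}
\frac{\partial\Phi}{\partial t}+\nabla_{A}^{*}\nabla_{A}\Phi&=&-L_{u}^{*}L_{u}\Phi.
\end{eqnarray*}
Granting it, the Kato identity $\nabla_{A}^{*}\nabla_{A}|\Phi|^{2}=2\langle\nabla_{A}^{*}\nabla_{A}\Phi,\Phi\rangle-2|\nabla_{A}\Phi|^{2}$ (valid since $\nabla_{A}$ is metric) gives
\begin{eqnarray*}
\Big(\frac{\partial}{\partial t}+\triangle\Big)\hat{e}
&=&2\Big\langle\frac{\partial\Phi}{\partial t}+\nabla_{A}^{*}\nabla_{A}\Phi,\ \Phi\Big\rangle-2|\nabla_{A}\Phi|^{2}\\
&=&-2\langle L_{u}^{*}L_{u}\Phi,\Phi\rangle-2|\nabla_{A}\Phi|^{2},
\end{eqnarray*}
and $\langle L_{u}^{*}L_{u}\Phi,\Phi\rangle=|L_{u}\Phi|^{2}$ is exactly the asserted equality.

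\textbf{Establishing the evolution identity.} Since the Kähler metric on $X$ and the data $J,\mu$ on $M$ do not vary, $\partial_{t}\Phi=\wedge D_{A}(\partial_{t}A)+d\mu(u)(\partial_{t}u)$. For the first term, substitute (5) and rewrite it in complex terms: by Lemma 3.6 (which uses the Bianchi identity and $F_{A}\in\Omega^{1,1}$, so $\partial_{A}F_{A}=\bar{\partial}_{A}F_{A}=0$) one has $D_{A}^{*}F_{A}=i(\partial_{A}-\bar{\partial}_{A})\wedge F_{A}$; by Lemma 3.5, $L_{u}^{*}d_{A}u=i(\partial_{A}-\bar{\partial}_{A})\mu(u)$; and $d_{A}c=0$ since $c$ is central. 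Hence $\partial_{t}A=i(\bar{\partial}_{A}-\partial_{A})\Phi$, and applying $\wedge D_{A}$, using $\partial_{A}^{2}\Phi=\bar{\partial}_{A}^{2}\Phi=0$ and the generalized Kähler identities (Lemma 3.3, Lemma 3.7), one obtains $\wedge D_{A}(\partial_{t}A)=-i\wedge(\bar{\partial}_{A}\partial_{A}-\partial_{A}\bar{\partial}_{A})\Phi=-\nabla_{A}^{*}\nabla_{A}\Phi$. For the second term, substitute (6): $d\mu(u)(\partial_{t}u)=-d\mu(u)\nabla_{A}^{*}d_{A}u-d\mu(u)(d\mu)^{*}(\mu(u)-c)$. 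Lemma 2.3 gives $(d\mu(u))^{*}=J\,L_{u}$ and $L_{u}^{*}=d\mu(u)\,J$, so $d\mu(u)(d\mu)^{*}(\mu(u)-c)=L_{u}^{*}L_{u}(\mu(u)-c)$; and using $\bar{\partial}_{A}u=0$ with Lemma 3.7 (which expresses $\nabla_{A}^{*}d_{A}u$ through the infinitesimal action of $\wedge F_{A}$ on $u$), together with Lemma 2.3 and the holomorphicity relation $J\cdot d_{A}u=d_{A}u\cdot j$ of (25), one gets $d\mu(u)\nabla_{A}^{*}d_{A}u=L_{u}^{*}L_{u}(\wedge F_{A})$. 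Therefore $d\mu(u)(\partial_{t}u)=-L_{u}^{*}L_{u}(\wedge F_{A})-L_{u}^{*}L_{u}(\mu(u)-c)=-L_{u}^{*}L_{u}\Phi$, and summing the two contributions yields $\partial_{t}\Phi=-\nabla_{A}^{*}\nabla_{A}\Phi-L_{u}^{*}L_{u}\Phi$.

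\textbf{Where the difficulty lies.} The passage $\partial_{t}A=i(\bar{\partial}_{A}-\partial_{A})\Phi\Longrightarrow\wedge D_{A}(\partial_{t}A)=-\nabla_{A}^{*}\nabla_{A}\Phi$, and the moment-map part of $\partial_{t}u$, are clean consequences of the Kähler identities, Lemma 2.3, and $F_{A}\in\Omega^{1,1}$. The real work is the identity $d\mu(u)\nabla_{A}^{*}d_{A}u=L_{u}^{*}L_{u}(\wedge F_{A})$: expanding $\nabla_{A}^{*}d_{A}u$ via Lemma 3.7 produces the action of $\wedge F_{A}$ on $u$ through $L_{u}$ together with second-covariant-derivative-of-$u$ contributions, and one must check that after applying $d\mu(u)$ these collapse precisely into $L_{u}^{*}L_{u}(\wedge F_{A})$ with nothing left over. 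This is exactly the step where holomorphicity of $u$ enters decisively, in the guise $\bar{\partial}_{A}u=0\Leftrightarrow J\cdot d_{A}u=d_{A}u\cdot j$ of (25), alongside the $(1,1)$-type of $F_{A}$; both are usable only because Lemma 3.4 propagates them along the flow, and the $\mathrm{Ad}$-invariance of $\langle\cdot,\cdot\rangle$ — which also annihilates the residual term $[\alpha(t),\Phi]$ that would appear had one instead computed with (30), (31) — is what makes the algebra close. Tracking the overall sign and the factor $-2$ is then routine against the Kato identity above.
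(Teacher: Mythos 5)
Your proof is correct and follows essentially the same route as the paper's: the same ingredients (Lemma 2.3, the generalized K\"ahler identities of Lemma 3.3, Lemmas 3.5--3.7, the $(1,1)$-type of $F_{A}$ and ${\bar\partial}_{A}u=0$) combined with the Bochner-type expansion of $\triangle|\Phi|^{2}$. The only difference is organizational: you isolate the evolution identity $\partial_{t}\Phi+\nabla_{A}^{*}\nabla_{A}\Phi=-L_{u}^{*}L_{u}\Phi$ first and then apply the Kato/Bochner formula in one line, whereas the paper expands $\partial_{t}\hat{e}$ and $\triangle\hat{e}$ separately and lets the terms $\langle\nabla_{A}^{*}\nabla_{A}(\wedge F_{A}),\Phi\rangle$ and $\langle d\mu(\nabla_{A}^{*}d_{A}u),\Phi\rangle$ cancel between the two, never needing your explicit identification $d\mu(u)\nabla_{A}^{*}d_{A}u=L_{u}^{*}L_{u}(\wedge F_{A})$ (which is nonetheless exactly what the paper's derivation of equation (31) already establishes).
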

\begin{proof} By direct computation, we have \begin{eqnarray*}
\frac{1}{2}\frac{\partial}{\partial t}|\wedge F_{A}+\mu(u)-c|^{2}&=&Re<\frac{\partial}{\partial t}(\wedge F_{A})+\frac{\partial}{\partial t}(\mu(u)-c), \wedge F_{A}+\mu(u)-c>\\
&=& Re<\frac{\partial}{\partial t}(\wedge F_{A}), \wedge F_{A}+\mu(u)-c>\\
&&+Re<\frac{\partial}{\partial t}(\mu(u)-c), \wedge F_{A}+\mu(u)-c>.
\end{eqnarray*}
And we also have
\begin{eqnarray*}
&&Re<\frac{\partial}{\partial t}(\wedge F_{A}), \wedge F_{A}+\mu(u)-c>\\
&=&Re<\wedge d_{A}(\frac{\partial A}{\partial t}),\wedge F_{A}+\mu(u)-c>\\
&=&Re<-\wedge d_{A}(L^{*}_{u}d_{A}u+d^{*}_{A}F_{A}), \wedge F_{A}+\mu(u)-c>((5))\\
&=&Re<i\wedge d_{A}({\bar \partial}_{A}-{\partial}_{A})\wedge F_{A}-\wedge d_{A}(L^{*}_{u}d_{A}u)), \wedge F_{A}+\mu(u)-c> (\textit{lemma }3.6)\\
&=&Re<i\wedge({\partial}_{A}{\bar \partial}_{A}-{\bar \partial}_{A}{\partial}_{A})\wedge F_{A},  \wedge F_{A}+\mu(u)-c>\\
&&-Re<\wedge d_{A}(L^{*}_{u}d_{A}u)), \wedge F_{A}+\mu(u)-c> ({\bar \partial}^{2}_{A}=0)\\
&=&-Re<\nabla^{*}_{A}\nabla_{A}(\wedge F_{A}), \wedge F_{A}+\mu(u)-c>\\
&&-Re<\wedge d_{A}(L^{*}_{u}d_{A}u)), \wedge F_{A}>-Re<\wedge d_{A}(L^{*}_{u}d_{A}u)), \mu(u)-c> (\textit{lemma }3.7)\\
&=&-Re<\nabla^{*}_{A}\nabla_{A}(\wedge F_{A}), \wedge F_{A}+\mu(u)-c>\\
&&-Re<\wedge d_{A}(d_{A}u), L_{u}(\wedge F_{A})>-Re<\wedge d_{A}(d_{A}u), L_{u}(\mu(u)-c)>\\
&=&-Re<\nabla^{*}_{A}\nabla_{A}(\wedge F_{A}), \wedge F_{A}+\mu(u)-c>-|L_{u}(\wedge F_{A})|^{2}\\
&&-Re<L_{u}(\wedge F_{A}), L_{u}(\mu(u)-c)> (d_{A}^{2}=F_{A}),
\end{eqnarray*}
and
\begin{eqnarray*}
&&Re<\frac{\partial}{\partial t}(\mu(u)), \wedge F_{A}+\mu(u)-c>\\
&=&-Re<d\mu(\nabla^{*}_{A}d_{A}u), \wedge F_{A}+\mu(u)-c>\\
&&-Re<d\mu((d\mu)^{*}(\mu(u)-c), \wedge F_{A}+\mu(u)-c>((6))\\
&=&-Re<d\mu(\nabla^{*}_{A}d_{A}u), \wedge F_{A}+\mu(u)-c>\\
&&-Re<L^{*}_{u}L_{u}(\mu(u)-c), \wedge F_{A}+\mu(u)-c>( \textit{lemma }2.3)\\
&=&-Re<d\mu(\nabla^{*}_{A}d_{A}u), \wedge F_{A}+\mu(u)-c>\\
&&-Re<L_{u}(\mu(u)-c), L_{u}(\wedge F_{A})>-|L_{u}(\mu(u)-c)|^{2}.
\end{eqnarray*}
Then add equalities above, we have \begin{eqnarray*}
\frac{1}{2}\frac{\partial}{\partial t}\hat{e}&=&-Re<\nabla^{*}_{A}\nabla_{A}(\wedge F_{A}), \wedge F_{A}+\mu(u)-c>\\
&&-Re<d\mu(\nabla^{*}_{A}d_{A}u), \wedge F_{A}+\mu(u)-c>-|L_{u}(\mu(u)-c)|^{2}\\
&&-|L_{u}(\wedge F_{A})|^{2}-2Re<L_{u}(\wedge F_{A}), L_{u}(\mu(u)-c)>\\
&=&-Re<\nabla^{*}_{A}\nabla_{A}(\wedge F_{A}), \wedge F_{A}+\mu(u)-c>\\
&&-Re<d\mu(\nabla^{*}_{A}d_{A}u), \wedge F_{A}+\mu(u)-c>-|L_{u}(\wedge F_{A}+\mu(u)-c)|^{2}.
\end{eqnarray*}
On the other hand, we have\begin{eqnarray*}
\frac{1}{2}\triangle \hat{e}&=&\nabla^{*}_{A}Re<\nabla_{A}(\wedge F_{A}+\mu(u)-c), \wedge F_{A}+\mu(u)-c)>\\
&=&Re<\nabla^{*}_{A}\nabla_{A}(\wedge F_{A}+\mu(u)-c), \wedge F_{A}+\mu(u)-c)>\\
&&-|\nabla_{A}(\wedge F_{A}+\mu(u)-c)|^{2}\\
&=&Re<\nabla^{*}_{A}\nabla_{A}(\wedge F_{A}), \wedge F_{A}+\mu(u)-c)>\\
&&+Re<\nabla^{*}_{A}\nabla_{A}(\mu(u)), \wedge F_{A}+\mu(u)-c)>\\
&&-|\nabla_{A}(\wedge F_{A}+\mu(u)-c)|^{2}\\
&=&Re<\nabla^{*}_{A}\nabla_{A}(\wedge F_{A}), \wedge F_{A}+\mu(u)-c)>\\
&&+Re<d\mu(\nabla^{*}_{A}d_{A}u), \wedge F_{A}+\mu(u)-c)>\\
&&-|\nabla_{A}(\wedge F_{A}+\mu(u)-c)|^{2}.
\end{eqnarray*}
Combining equalities above we complete the proof of lemma 3.12. \end{proof}
\begin{lemma} Assume the same conditions as in Theorem 3.11, then
${sup}_{X}\hat{e}$ is a decreasing function of $t$.
\end{lemma}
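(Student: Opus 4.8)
The plan is to obtain the monotonicity of $t\mapsto \sup_X\hat{e}(\cdot,t)$ directly from the differential inequality $(\frac{\partial}{\partial t}+\triangle)\hat{e}\le 0$ of Theorem 3.11 (which is in turn just Lemma 3.12, whose right-hand side is manifestly $\le 0$), by applying the parabolic maximum principle on the compact manifold $X$. Write $m(t):=\sup_X\hat{e}(\cdot,t)$. Since $(A,u)$ is a smooth local solution by Theorem 3.8, the function $\hat{e}$ is smooth on $X\times[0,\epsilon)$, and because $X$ is closed the supremum is attained at each time and $m$ is finite and (locally Lipschitz, hence) continuous.

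First I would fix $0\le t_1<t_2<\epsilon$ and work on the compact slab $X\times[t_1,t_2]$. Set $w:=\hat{e}-m(t_1)$, so $w(\cdot,t_1)\le 0$ and $(\frac{\partial}{\partial t}+\triangle)w\le 0$ by Theorem 3.11. To run the maximum principle in spite of the non-strict inequality, for $\delta>0$ put $w_\delta:=w-\delta(t-t_1)$; then $(\frac{\partial}{\partial t}+\triangle)w_\delta\le -\delta<0$ while still $w_\delta(\cdot,t_1)\le 0$. If $w_\delta$ were positive somewhere on $X\times(t_1,t_2]$ it would attain a positive maximum at some $(x_0,t_0)$ with $t_0>t_1$; there $\frac{\partial}{\partial t}w_\delta(x_0,t_0)\ge 0$ (interior maximum in $t$, or right endpoint $t_0=t_2$) and $\triangle w_\delta(x_0,t_0)\ge 0$ (since $x_0$ realizes a spatial maximum of $w_\delta(\cdot,t_0)$ and $\triangle=\nabla^*_A\nabla_A$ is the nonnegative connection Laplacian), whence $(\frac{\partial}{\partial t}+\triangle)w_\delta(x_0,t_0)\ge 0$, a contradiction. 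Thus $w_\delta\le 0$ on the slab, and letting $\delta\to 0$ yields $w\le 0$, i.e. $\hat{e}(\cdot,t_2)\le m(t_1)$ on $X$, hence $m(t_2)\le m(t_1)$. As $t_1<t_2$ were arbitrary, $m$ is non-increasing, which is the assertion. (Alternatively one may invoke Hamilton's trick: $m$ is locally Lipschitz and, at times of differentiability, $m'(t)=\frac{\partial}{\partial t}\hat{e}$ evaluated at a point realizing the maximum, where $\triangle\hat{e}\ge 0$, so $m'(t)\le 0$ a.e.; I would present the comparison argument above as it is the cleanest and self-contained.)

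The hard part here is essentially nonexistent — all the analytic content sits in Lemma 3.12/Theorem 3.11. The only things needing care are the sign convention, so that the geometer's Laplacian $\nabla^*_A\nabla_A$ is genuinely nonnegative and is $\ge 0$ at a spatial maximum, and the standard $\delta(t-t_1)$ perturbation, which is what upgrades $(\frac{\partial}{\partial t}+\triangle)\hat{e}\le 0$ to a strict inequality and so makes the contradiction at the maximum point go through.
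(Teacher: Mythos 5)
Your argument is correct and is exactly the paper's route: the paper's proof of this lemma consists of the single line ``This follows from Theorem 3.11 and the maximum principle for the heat operator $(\frac{\partial}{\partial t})+\triangle$ on $X$,'' and you have simply written out that maximum-principle argument in full (with the standard $\delta(t-t_1)$ perturbation and the correct sign convention for the nonnegative connection Laplacian at a spatial maximum). No discrepancy to report.
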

\begin{proof}This follows from Theorem 3.11 and the maximum principal for the heat operator $(\frac{\partial}{\partial t})+\triangle$ on $X$. \end{proof}

Then using this lemma, we can prove the following corollary similar to [8, Lemma 8].\\
\begin{corollary} Let $H(t)=H_{0}h(t)$ where $h(t)$ is smooth solution of (18) in $X\times [0, T)$ with the initial value $H_{0}$ for a finite time $T>0$. Then $H(t)$ converges in $C^{0}$ to a nondegenerate continuous metric $H(T)$ as $t\rightarrow T$. \end{corollary}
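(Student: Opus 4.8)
The plan is to combine the $C^0$-bound on $H(t)$ coming from the evolution equation with the uniform control of the "energy density" $\hat e$ established in Lemma 3.13, and then to promote $C^0$-convergence to convergence to a \emph{nondegenerate} limit by a Gr\"onwall-type argument on the distance function $\sigma$ of Definition 3.1. First I would show that $\sigma(H(t),H(s))$ stays bounded as $t,s\to T$. To this end, recall from (17) that $\frac{\partial H}{\partial t}=-2iH[\hat F_t+\mu(u)-c]$, so that $\bigl|\frac{\partial}{\partial t}\log\det(H_0^{-1}H(t))\bigr|$ and, more generally, the pointwise rate of change of $H$ is controlled by $|\wedge F_A+\mu(u)-c|=\hat e^{1/2}$. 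By Lemma 3.13, $\sup_X\hat e$ is decreasing in $t$, hence bounded by its value at $t=0$, so the right-hand side of (17) is bounded uniformly in $X\times[0,T)$. Integrating over the finite interval $[0,T)$ gives a uniform $C^0$-bound on $H(t)$ and on $H(t)^{-1}$, independent of $t<T$.

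Next I would upgrade this bound to genuine $C^0$-convergence as $t\to T$. The key point is that (17) shows $\frac{\partial H}{\partial t}$ is bounded in $C^0$ uniformly on $X\times[0,T)$, so $\{H(t)\}_{t<T}$ is uniformly Cauchy in $C^0$ as $t\to T$: indeed for $s<t<T$,
\[
\|H(t)-H(s)\|_{C^0}\le\int_s^t\Bigl\|\tfrac{\partial H}{\partial\tau}\Bigr\|_{C^0}\,d\tau\le 2\sup_{X\times[0,T)}\bigl(\|H\|\,\hat e^{1/2}\bigr)\,(t-s)\longrightarrow 0.
\]
Therefore $H(t)$ converges in $C^0$ to a continuous limiting section $H(T)$ of Hermitian forms. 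It remains to see that $H(T)$ is nondegenerate, i.e. a genuine metric. This is where the distance function $\sigma$ enters: by the same Lipschitz estimate, $\tau(H_0,H(t))$ and $\tau(H(t),H_0)$ are bounded above uniformly in $t<T$, so $\sigma(H_0,H(t))\le C$ for all $t<T$, and by continuity $\sigma(H_0,H(T))\le C<\infty$. Since $\sigma(H_0,H(T))$ bounds both $\tau(H_0,H(T))$ and $\tau(H(T),H_0)$, the eigenvalues of $H_0^{-1}H(T)$ are bounded above and below away from $0$; hence $H(T)$ is a nondegenerate continuous metric.

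The main obstacle I expect is the nondegeneracy at the limit time $T$: a priori $H(t)$ could converge in $C^0$ to a \emph{degenerate} limit, and ruling this out requires the uniform two-sided bound on $\sigma$, which in turn relies on the decreasing-ness of $\sup_X\hat e$ (Lemma 3.13) to keep the integrand in (17) integrable on $[0,T)$. I would take care to justify that the bound on $\sup_X\hat e$ really gives a pointwise bound on $|\wedge F_A+\mu(u)-c|$, and hence on $\hat F_t+\mu(u)-c$ appearing in (17), uniformly in $t<T$ — this is immediate from the definition (38) of $\hat e$. Once that is in place, the argument is a routine Gr\"onwall/Cauchy estimate on the finite interval $[0,T)$, exactly parallel to \cite[Lemma 8]{8}, and the proof is complete.
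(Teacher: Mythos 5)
Your argument is correct and is essentially the same one the paper relies on: the paper gives no written proof here, deferring to Hong's Lemma 8 (reference [8]), and that lemma's proof is exactly your Gr\"onwall/Cauchy estimate on (17) using the uniform bound on $\sup_X\hat e$ from Lemma 3.13, together with the two-sided eigenvalue control via $\sigma$ to rule out degeneration of the limit. No issues.
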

Using almost the same procedure as [6] (or [8]), we can further prove
\begin{lemma} Let $H(t), 0\leq t< T$, be any one-parameter family of Hermitian metrics on a holomorphic principal bundle $P$ over a compact K$\ddot{\textmd{a}}$hler manifold $X$ such that\\
(i) $H(t)$ converges in $C^{0}$ to some continuous metric $H(T)$ as $t\rightarrow T$,\\
(ii) ${sup}_{X}|{\hat{F}}_{H}|$ is uniformly bounded for $t< T$.\\
Then $H(t)$ is bounded in $C^{1,\alpha}$ independently of $t\in[0, T)$ for $0<\alpha<1$.
\end{lemma}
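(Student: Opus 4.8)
The plan is to establish the $C^{1,\alpha}$ bound by the standard Hamilton–Simpson–Donaldson argument for parabolic evolution of Hermitian metrics, adapted to the principal-bundle setting. First I would reduce the problem to a statement about the endomorphism $h = h(t)$ relating $H(t)$ to a fixed reference metric $H_0$, working locally in a trivialization over a coordinate ball. In such a trivialization the curvature is $\hat F_{H} = \wedge(\bar\partial_{A_0}(h^{-1}\partial_{A_0}h)) + \hat F_{H_0}$ up to conjugation, so hypothesis (ii) together with the $C^0$ bound on $h$ coming from (i) gives a uniform $L^\infty$ bound on the elliptic quantity $\wedge\,\bar\partial_{A_0}(h^{-1}\partial_{A_0}h)$. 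The idea is then to interpret this as a quasilinear second-order elliptic inequality for $h$ with right-hand side bounded in $L^\infty$, uniformly in $t$.

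The key steps, in order, are: (1) fix a finite cover of $X$ by coordinate balls with holomorphic trivializations of $P$; (2) on each ball, write the curvature identity above and use (i)–(ii) to bound $\|\wedge\,\bar\partial_{A_0}(h^{-1}\partial_{A_0}h)\|_{L^\infty}$ uniformly in $t$; (3) since $h$ is a positive self-adjoint endomorphism with $\|h\|_{C^0}$ and $\|h^{-1}\|_{C^0}$ bounded by (i), treat the curvature expression as $\wedge\,\partial_{A_0}\bar\partial_{A_0} h + Q(h, \nabla h)$ where $Q$ is quadratic in the first derivatives with coefficients controlled by $\|h\|_{C^0}$; (4) apply interior elliptic $L^p$ estimates to this equation to upgrade to a uniform $W^{2,p}$ bound, hence by Sobolev embedding (for $p$ large) a uniform $C^{1,\alpha}$ bound on $h$ on a slightly smaller ball, for all $t\in[0,T)$ including continuously up to $t=T$ by (i); (5) patch the local bounds over the finite cover to obtain the global $C^{1,\alpha}$ bound on $H(t)$ independent of $t$. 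This is exactly the scheme used in [6] for the Hermitian–Einstein flow and in [8, proof following Lemma 8], and the presence of the section $u$ does not affect it because $u$ enters only through lower-order terms $\mu(u)-c$ that are already absorbed into the bounded quantity $\hat e$ controlled by Lemma 3.14 together with hypothesis (ii).

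The main obstacle is step (3)–(4): one must verify that the quadratic first-order term $Q(h,\nabla h)$ genuinely does not spoil the elliptic estimate, i.e. that the nonlinearity is the "good" sign/structure that appears in the Donaldson–Simpson estimate (the Chern connection expression $h^{-1}\partial_{A_0}h$ is precisely what makes the quadratic terms harmless after the standard manipulation, using self-adjointness and positivity of $h$). Here one invokes the inequality $\lambda+\lambda^{-1}\ge 2$ already recorded in (34) and the $C^0$ two-sided control of $h$ from (i) to bound $Q$ in terms of $|\nabla h|$ with controlled coefficients; then a Young's-inequality absorption (or the Moser-type iteration as in [6]) closes the estimate. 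I would remark that since $A_0$ is fixed and smooth, all the background connection coefficients are bounded on the coordinate balls, so no further care is needed there; the argument is genuinely local and uniform in $t$, and the conclusion follows by patching.
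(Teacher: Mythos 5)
The paper offers no proof of this lemma at all: it is stated with the preface ``Using almost the same procedure as [6] (or [8]), we can further prove \ldots'', so the intended argument is Donaldson's [6, Lemma 19] (reproduced in [8]). Your reconstruction does not follow that argument, and as written it has a genuine gap at exactly the point you flag as ``the main obstacle''. Writing $H=H_{0}h$, hypothesis (ii) bounds $i\wedge\bar{\partial}_{A_{0}}(h^{-1}\partial_{A_{0}}h)$ in $L^{\infty}$; expanding $\bar{\partial}(h^{-1}\partial h)=h^{-1}\bar{\partial}\partial h-h^{-1}\bar{\partial}h\,h^{-1}\partial h$, the resulting equation for $h$ has the form $\Delta h=O(1)+O(|\nabla h|^{2})$, where the coefficient of the quadratic term is of size $\|h^{-1}\|_{C^{0}}$ --- bounded, but in no way small. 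A critical (``natural growth'') term $|\nabla h|^{2}$ with an $O(1)$ coefficient cannot be absorbed into the Laplacian by Young's inequality, and interior $L^{p}$ elliptic theory cannot be started until $\nabla h$ is already controlled in some $L^{q}$ with $q$ above the real dimension; the trace identity only yields a uniform $W^{1,2}$ bound on $h$, which closes the bootstrap only when $\dim_{\mathbb{C}}X=1$. Your appeal to the inequality $\lambda+\lambda^{-1}\geq 2$ of (34) is a non sequitur here: that inequality concerns the distance $\sigma(H,K)$ between two metrics and has nothing to do with controlling quadratic gradient terms.

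The mechanism actually used in [6] is a contradiction-and-rescaling argument. If $m_{i}=\sup_{X}|\nabla h(t_{i})|\rightarrow\infty$, one dilates coordinates by the factor $m_{i}$ about points where the supremum is attained, producing metrics $\tilde{h}_{i}$ on expanding balls with $|\nabla\tilde{h}_{i}|\leq 1$, $|\nabla\tilde{h}_{i}(0)|=1$, and $\wedge\bar{\partial}(\tilde{h}_{i}^{-1}\partial\tilde{h}_{i})\rightarrow 0$; \emph{now} that the gradient is a priori bounded the quadratic term is harmless and linear elliptic estimates give $C^{1,\alpha}$ convergence on compact sets to a limit $\tilde{h}_{\infty}$. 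The $C^{0}$ \emph{convergence} in hypothesis (i) --- not merely the two-sided $C^{0}$ bounds on $h$ and $h^{-1}$, which is all your argument extracts from (i) --- forces the oscillation of $h$ over the shrinking balls to vanish, so $\tilde{h}_{\infty}$ is constant, contradicting $|\nabla\tilde{h}_{\infty}(0)|=1$. That your proposal uses (i) only for boundedness is the clearest symptom that the key idea is missing: the blow-up step is precisely where convergence, rather than boundedness, is indispensable.
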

Now we can prove the global existence of (17) in the following theorem.\\
\begin{theorem} Assume that both the base manifold $X$ and the fiber $M$ are compact K$\ddot{\textmd{a}}$hler manifolds, where $M$ supports a Hamiltonian $G$-group action, and $P$ is a holomorphic G-principal bundle over $X$. Let $H_{0}$ be a Hermitian metric on $P$, $A_{0}$ the canonical connection with respect to $H_{0}$ (it is well-known that $ A_{0}\in\mathcal{A}^{1,1}$), $u_{0}$ a holomorphic section of the fiber bundle $P\times_{G}M$, i.e. ${\bar \partial}_{A_{0}}u_{0}=0$. Then the equation
\begin{eqnarray*} \frac{\partial H}{\partial t}=-2iH[\wedge F_{H}+\mu(u)-c] \end{eqnarray*}
has a unique solution $H(t)$ which exists on $X$ for $0\leq t<\infty$.
\end{theorem}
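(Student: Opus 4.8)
The plan is to run the standard continuation argument: short-time existence plus uniform a priori estimates up to any finite time force the solution to exist for all time. Concretely, I would first invoke the parabolic short-time theory underlying Theorem 3.8, together with the local uniqueness Theorem 3.10, to obtain a maximal half-open interval $[0,T)$ on which a unique smooth solution $H(t)=H_0h(t)$ of (17) (equivalently, $h(t)$ of (18)) exists, and assume for contradiction that $T<\infty$. The objective is then to prove that $H(t)$ extends smoothly to $t=T$: once $H(T)$ is a smooth nondegenerate metric, reapplying the local existence theorem with initial datum $H(T)$ yields a solution on $[0,T+\epsilon)$, contradicting maximality of $T$.

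The a priori estimates come from the lemmas already in hand. By Lemma 3.13 the quantity $\sup_X\hat e=\sup_X|\wedge F_A+\mu(u)-c|^2$ is nonincreasing in $t$, hence uniformly bounded on $[0,T)$. Since $M$ is compact and the inner product on $\mathfrak g$ is $\mathrm{Ad}$-invariant, $|\mu(u)|$ is bounded by a constant depending only on $M$, and $c$ is fixed; therefore $\sup_X|\hat F_{H(t)}|=\sup_X|\wedge F_{H(t)}|$ is uniformly bounded for $t\in[0,T)$, which is hypothesis (ii) of Lemma 3.15. Hypothesis (i) — $C^0$-convergence $H(t)\to H(T)$ to a nondegenerate continuous metric as $t\uparrow T$ — is precisely Corollary 3.14, which in turn rests on Lemma 3.13 and the distance estimate of Lemma 3.9. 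Hence Lemma 3.15 supplies a bound on $H(t)$ in $C^{1,\alpha}$ uniform in $t\in[0,T)$, for every $0<\alpha<1$.

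With $h$ bounded in $C^{1,\alpha}$ and bounded below (nondegeneracy), I would then bootstrap directly on (18): its principal part $-\triangle_{A_0}h$ has fixed smooth coefficients, and the remaining terms $-i[h\hat F_{A_0}+\hat F_{A_0}h+2(\mu(u_0)-c)h]+2i\wedge(\bar\partial_{A_0}h\,h^{-1}\partial_{A_0}h)$ are, given the $C^{1,\alpha}$ and lower bounds, controlled in $C^{0,\alpha}$ uniformly on $[0,T)$. Parabolic Schauder estimates then give a uniform $C^{2,\alpha}$ bound, and iterating (elliptic/parabolic regularity applied to successively differentiated equations, exactly as in Donaldson's and Hong's vector-bundle arguments, since (18) has the identical structure) yields uniform $C^{k,\alpha}$ bounds for all $k$. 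Thus $H(t)\to H(T)$ in $C^\infty$ to a smooth nondegenerate metric, and the local existence theorem extends the solution past $T$ — the desired contradiction, so $T=\infty$. Uniqueness on $[0,\infty)$ follows by applying Theorem 3.10 on each finite subinterval; and running the solution through the gauge-transformation equivalence (the flow $S(t)$ of $-\alpha(t)$ in (32) and the identifications preceding Theorem 3.8) turns the global solution of (17) into the global smooth solution $(A,u)$ of (5),(6) with $A(\cdot,0)=A_0$, $u(\cdot,0)=u_0$.

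The main obstacle I anticipate is the bootstrap step, specifically making the higher-order estimates uniform \emph{up to} the finite time $T$ rather than merely on compact subsets of $[0,T)$; this is where the $C^{1,\alpha}$ bound of Lemma 3.15, the $C^0$-convergence of Corollary 3.14, the monotonicity of Lemma 3.13, and the compactness of $M$ (boundedness of $\mu$) all enter crucially, after which the conclusion reduces to the well-established parabolic regularity machinery for Hermitian–Einstein-type flows.
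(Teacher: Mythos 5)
Your proposal follows essentially the same route as the paper: short-time existence and uniqueness, the monotonicity of $\sup_X\hat e$ from Lemma 3.13 to bound $\hat F_H$, the $C^0$-convergence of Corollary 3.14, the $C^{1,\alpha}$ bound of Lemma 3.15, a parabolic bootstrap on (18) following Donaldson and Hong to reach $C^\infty$ convergence at $t=T$, and reapplication of the local existence theorem to continue past $T$. The only addition is that you make explicit the (correct) step of passing from a bound on $|\wedge F_A+\mu(u)-c|$ to one on $|\hat F_H|$ via the boundedness of $\mu$ on the compact fiber $M$, which the paper leaves implicit.
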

\begin{proof} The proof is totally similar to [8], so here we only give a sketch. The local existence and uniqueness of this equation has been proved by previous result. Suppose that the solution exists for $0\leq t< T$. By
 lemma 3.13, ${sup}_{X}|\hat{F_{H}}|^{2}$ is bounded independently of $t\in [0, T)$ and ${\hat{F}}_{H}$ is bounded independently of $t\in [0, T)$. By corollary 3.14, $H(t)$ converges in $C^{0}$ to a nondegenerate continuous limit metric $H(T)$ as $t\rightarrow T$. Thus by lemma 3.15, $H(t)$ is bounded in $C^{1,\alpha}$ independently of $t\in [0, T)$.
  Let $H(t)=H_{0}h(t)$, where $h(t)$ solves
 \begin{eqnarray*}
\frac{\partial h}{\partial t}=-\triangle_{A_{0}}h-i[h\hat{F}_{A_{0}}+\hat{F}_{A_{0}}h+2(\mu(u_{0})-c)h]+2i\wedge({\bar \partial}_{A_{0}}h h^{-1}{\partial}_{A_{0}}h). \end{eqnarray*}
Then it is totally the same as the proof of [8, Theorem 11] to prove that $h$ is $C^{2,\alpha}$ and $\frac{\partial h}{\partial t}$ is $C^{\alpha}$ with bounds independent of $t\in [0, T)$. Thus $H(t)\rightarrow H(T)$ in $C^{2,\alpha}$, hence in $C^{\infty}$. For the initial value $H(T)$ we use local existence theorem (Theorem 3.8) again. Therefore the solution continues to exist for $t<T+\epsilon$, for some $\epsilon$. This completes the proof. \end{proof}

\begin{proof}[Proof of Theorem 1.1] By Theorem 3.8, Theorem 3.10 and Theorem 3.16, we can immediately obtain Theorem 1.1. \end{proof}

\section{Application}
$\hspace{15pt}$In this section we will use the heat flow of Yang-Mills-Higgs functional to give a new proof of Theorem 1.2 which is critical to his Hitchin-Kobayashi correspondence theorem [13, 14]. The idea is the same as [8]'s approach to combine Donaldson's method [6] with Uhlenbeck and Yau's method [19]. Due to the property of the heat flow, we apply a well-known PDE result to prove an inequality which relates $C^{0}$ and $L^{1}$ norms of $s\in {Met}^{p}_{2,B}$, that is \begin{equation}
sup|s|\leq C_{B} ||s||_{L^{1}}. \end{equation} Then we prove \begin{equation} ||s||_{L^{1}}\leq C_{1}'\Psi^{c}(e^{s})+C'_{2}. \end{equation}
After combining the two inequalities, we can prove Theorem 1.2.

\subsection{c-stability}
$\hspace{15pt}$First let's recall the definition of $c$-stability [14].
Suppose that $(M, \omega, \mu)$ is a compact K$\ddot{\textmd{a}}$hler manifold with a Hamiltonian group $K$-action and $\mu: M\rightarrow \mathfrak{t}$ is the corresponding moment map, where the Lie algebra $\mathfrak{t}$ carries an invariant inner product $<\cdot,\cdot>$. We can extend the action of $K$ on $M$ to the complexification $G$ of $K$ if $M$ is a compact K$\ddot{\textmd{a}}$hler manifold.\\

$\hspace{-17pt}$$\textbf{Definition 4.1.}$  $\forall x\in M$ and $s\in\mathfrak{t}$. Let $\lambda_{t}(x;s)=\mu_{s}(e^{its}x)=<\mu(e^{its}x),s>$. Define the $maximal$ $weight$ $\lambda(x;s)$ of the action of $s$ on $x$ to be
\begin{eqnarray*} \lambda(x;s)=lim_{t\rightarrow\infty}\lambda_{t}(x;s)\in \mathbb{R}\cup\{\infty\}. \end{eqnarray*} \par
Let $\mathfrak{g}=Lie(G)$, we have $\mathfrak{g}=\mathfrak{l}\oplus \mathfrak{g}^{s}$, where $\mathfrak{l}$ is the center of $\mathfrak{g}$, and
$\mathfrak{g}^{s}=[\mathfrak{g},\mathfrak{g}]$. Suppose that $\mathfrak{h}\subset \mathfrak{g}^{s}$ is a Cartan subalgebra , and $R\subset\mathfrak{ h}^{*}$ is the set of roots, then we have
$\mathfrak{g}=\mathfrak{l}\oplus \mathfrak{h} \oplus \bigoplus_{\alpha\in R}\mathfrak{g}_{\alpha},$
where $\mathfrak{g}_{\alpha}\subset \mathfrak{g}^{s}$ is the subspace on which $\mathfrak{h}$ acts through the character $\alpha\in \mathfrak{h}^{*}$.
We can decompose the set $R$ of roots in positive and negative roots as: $R=R^{+}\cup R^{-}$. Write the set of simple roots $\triangle=(\alpha_{1},\ldots,\alpha_{r})\subset R^{+}$. Then take any subset $A=\{\alpha_{i_{1}},\ldots,\alpha_{i_{s}}\}\subset\triangle$, and let \\

$D=D_{A}=\{\alpha\in R|\alpha=\Sigma_{j=1}^{r}m_{j}\alpha_{j}$, where $m_{i_{t}}\geq0$ for $1\leq t \leq s\}$.
\\

$\hspace{-17pt}$$\textbf{Definition 4.2.}$ We call the subalgebra $\mathfrak{q}=\mathfrak{l}\oplus\mathfrak{ h}\oplus\bigoplus_{\alpha\in D}\mathfrak{g}_{\alpha}$ $parabolic$ $subalgebra$ of $\mathfrak{g}$ with respect to $A\subset\triangle$. The connected subgroup $Q$ of $G$ whose subalgebra is $\mathfrak{q}$ is called the $parabolic$ $subgroup$ of $G$ with respect to $A$. Let $\lambda_{1},\ldots,\lambda_{r}$ be the set of fundamental weights, then we call any positive (resp. negative) linear combination of the fundamental weights $\lambda_{i_{1}},\ldots, \lambda_{i_{s}}$ plus an element of the dual of $i(\mathfrak{l}\cap \mathfrak{k})$ a $dominant$ (resp. $antidominant$) $character$ on $\mathfrak{q}$ (or on $Q$).\\

$\hspace{-17pt}$$\textbf{Definition 4.3.}$
Let $P\rightarrow X$ be principal $G$-bundle, $P_{G}=P\times_{K}G$, and $P(G/Q)=P_{G}\times_{G}(G/Q)$. We call the space $\Gamma(P(G/Q))$ of sections of the bundle $P(G/Q)$ as the space of $reductions$ of the structure group of $P_{G}$ from $G$ to $Q$.  A reduction $\sigma$ is $holomorphic$ if the map $\sigma: X\rightarrow P(G/Q)$ is holomorphic. \\

According to [14], there is a section $g_{\sigma,\chi}\in\Omega^{0}(P\times_{Ad}i\mathfrak{k})$ which is fiberwise the dual of $\chi$ for a parabolic subgroup $Q\subset G$, a reduction $\sigma\in \Gamma(P(G/Q))$ and an antidominant character $\chi$. [14] has also proved that $\mathcal{ L}$ is a K$\ddot{\textmd{a}}$hler manifold with a Hamiltonian $\mathcal{G}_{K}(=\Gamma(P\times_{Ad}K)$-group action, its corresponding moment map is
\begin{eqnarray*} \mu_{\mathcal{L}}(u)=\mu(u)=\mu\cdot u. \end{eqnarray*}
Therefore, we can define the maximal weight of $s\in Lie(\mathcal{G}_{K})=\Omega^{0}(P\times_{Ad}\mathfrak{k})$ acting on a section $u\in\mathcal{ L}$ as
\begin{eqnarray*}
\int_{x\in X}\lambda(u(x);s(x)).
\end{eqnarray*}
Fixing any central element $c\in \mathfrak{l}\cap \mathfrak{k}$, $\forall u\in\mathcal{ L}$, we define the $c-$$total$ $degree$ of the pair $(\sigma, \chi)$ as follows:
\begin{eqnarray*}
T_{u}^{c}(\sigma,\chi)=deg(\sigma,\chi)+\int_{x\in X}\lambda(u(x);-ig_{\sigma,\chi}(x))+<i\chi,c>Vol(X),
\end{eqnarray*}
where the definition of $deg(\sigma,\chi)$ can be found in [14, 2.8].\\

$\hspace{-17pt}$$\textbf{Definition 4.4.}$  A pair $(A, u)\in \mathcal{A}^{1,1}\times \mathcal{L}$ is called $c$-$stable$, if for any $X_{0}\subset X$ whose complement $X\setminus X_{0}$ is a complex codimension 2 subvariety of $X$, for any parabolic subgroup $Q\subset G$, for any antidominant character $\chi$ of $Q$, and for any holomorphic reduction $\sigma\in\Gamma(X_{0};P(G/Q))$, we have
\begin{eqnarray*} T_{u}^{c}(\sigma,\chi)>0. \end{eqnarray*}

$\hspace{-17pt}$$\textbf{Definition 4.5.}$ A pair $(A, u)$ is called $simple$ if there is no semisimple element in $Lie(\mathcal{G}_{G})$ which leaves $(A, u)$ fixed, where $\mathcal{G}_{G}=\Gamma(P\times_{Ad}G)$.\\

Now we can state the main theorem of [14] as follows:
\begin{theorem}[Hitchin-Kobayashi correspondence] Assume that $(A, u)\in \mathcal{A}^{1,1}\times \mathcal{L}$ is a simple pair. Then there exists a gauge transformation $g\in \mathcal{G}_{G}$ such that
\begin{eqnarray*} \wedge F_{g(A)}+\mu(g(u))=c, \end{eqnarray*}
if and only if $(A, u)$ is c-stable.
\end{theorem}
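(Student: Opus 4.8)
The plan is to follow the two-step scheme announced at the start of this section. Write $m(H)=\wedge F_{H(A)}+\mu(H(u))-c$ for the Yang-Mills-Higgs moment map of the metric $H=H_0e^{s}$, and let $\Psi^c$ denote the Donaldson-type functional of section 4, whose variation along $t\mapsto e^{ts}$ is $\int_X\langle s,m(H_0e^{ts})\rangle\,dvol$, so that $\Psi^c(e^s)=\int_0^1\!\int_X\langle s,m(H_0e^{ts})\rangle\,dvol\,dt$. It suffices to prove the two estimates
\[
\sup_X|s|\ \le\ C_B\,\|s\|_{L^1},\qquad \|s\|_{L^1}\ \le\ C_1'\,\Psi^c(e^s)+C_2',\qquad s\in \mathrm{Met}^p_{2,B},
\]
since composing them gives (8) with $C_1=C_BC_1'$ and $C_2=C_BC_2'$. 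Throughout, $(A,u)$ is the fixed simple, $c$-stable pair, and I will use freely that along the Yang-Mills-Higgs heat flow (Theorem 1.1 and Lemma 3.13) the quantity $\sup_X\hat e$ is non-increasing, so that evolving a metric keeps it inside some fixed $\mathrm{Met}^p_{2,B}$; this is the ``property of the heat flow'' that makes $C_B$ a genuine constant.

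For the first estimate I would establish a distributional Kato--Bochner inequality for the Lipschitz function $f=|s|$. Differentiating $|s|$ in the distributional sense and using the generalized K\"ahler identities of Lemma 3.3 together with the curvature identities behind Lemma 3.7 to control the cross terms, and using that $\mu$ is bounded on the compact fiber $M$, one obtains
\[
\triangle f\ \le\ |m(H_0e^{s})|+|m(H_0)|+C\ \le\ B+\|m(H_0)\|_{L^\infty}+C\ =:\ C_B .
\]
A nonnegative distributional subsolution of $\triangle f\le C_B$ on the compact K\"ahler manifold $X$ satisfies $\sup_X f\le C(\|f\|_{L^1}+C_B)$ by Moser iteration; this is the ``well-known PDE result'' (compare Donaldson [6] and Hong [8]), and absorbing the fixed constant $C_B$ yields the asserted $C^0$-$L^1$ bound.

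The second estimate is where $c$-stability enters, and it is the main obstacle. I would argue by contradiction in the spirit of Uhlenbeck--Yau [19], adapted to reductions of the structure group as in Mundet [14]. If the estimate fails there is a sequence $s_i\in\mathrm{Met}^p_{2,B}$ with $\|s_i\|_{L^1}\to\infty$ while $\Psi^c(e^{s_i})/\|s_i\|_{L^1}$ stays bounded. Put $l_i=s_i/\|s_i\|_{L^1}$, so $\|l_i\|_{L^1}=1$. Convexity of $\Psi^c$ along the paths $t\mapsto e^{ts}$, i.e.\ positivity of its second variation (a consequence of the K\"ahler identities of Lemma 3.3 and the moment-map property of $\mu$), together with the $\mathrm{Met}^p_{2,B}$ bound, yields a uniform $L^p_1$ bound on the $l_i$, hence a nonzero weak limit $l_\infty$ which is weakly holomorphic in the Uhlenbeck--Yau sense and whose eigenvalue functions are a.e.\ constant. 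The spectral projections of $l_\infty$ then define, via the Uhlenbeck--Yau/Mundet regularity theorem, a holomorphic reduction $\sigma$ of the structure group to a parabolic subgroup $Q\subset G$ over an open set $X_0$ with $X\setminus X_0$ of complex codimension $\ge 2$, together with an antidominant character $\chi$; passing to the limit in $\Psi^c(e^{s_i})/\|s_i\|_{L^1}$ identifies it with $-\,T^c_u(\sigma,\chi)$ up to a positive factor, forcing $T^c_u(\sigma,\chi)\le 0$, which contradicts $c$-stability.

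Two points will need care. First, one must rule out the degenerate alternative in which $l_\infty$ is covariantly constant, semisimple, and fixes $(A,u)$; this is precisely where the hypothesis that $(A,u)$ is \emph{simple} (Definition 4.5) is used. Second, the identification of $\lim_i\Psi^c(e^{s_i})/\|s_i\|_{L^1}$ with the $c$-total degree $T^c_u(\sigma,\chi)$, by splitting the limit into the degree term $\deg(\sigma,\chi)$ and the maximal-weight integral $\int_X\lambda(u(x);-ig_{\sigma,\chi}(x))$ of Definition 4.1, is the technical heart of Mundet's argument and I expect to have to reproduce it; the genuinely new ingredient is only that long-time existence of the Yang-Mills-Higgs heat flow (Theorem 1.1) supplies the comparison metrics with the mean-curvature bounds needed to keep everything inside $\mathrm{Met}^p_{2,B}$ throughout.
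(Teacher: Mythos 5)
Your two-estimate scheme for the key inequality (8) is essentially the paper's own argument for Theorem 1.2: the $C^{0}$--$L^{1}$ bound $\sup|s|\le C_{B}\|s\|_{L^{1}}$ obtained from a distributional differential inequality plus a Moser-type estimate (Lemma 4.3, following [8, Lemma 13]), and the bound $\|s\|_{L^{1}}\le C_{1}'\Psi^{c}(e^{s})+C_{2}'$ proved by contradiction --- normalizing $u_{j}=s_{j}/\|s_{j}\|_{L^{1}}$, extracting a weak limit $u_{\infty}$ with $\lambda((A,u);-iu_{\infty})\le 0$ via the first-variation formula for $\Psi^{c}$ (Lemma 4.4), invoking the Uhlenbeck--Yau/Simpson regularity to produce a holomorphic reduction $\sigma$ to a parabolic $Q$ and an antidominant character $\chi$ off a codimension-two set, and identifying $T_{u}^{c}(\sigma,\chi)=\lambda((A,u);-iu_{\infty})\le 0$ by [14, Lemma 4.3], contradicting $c$-stability. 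Your use of Lemma 3.13 to keep the evolved metrics in a fixed $\mathrm{Met}^{p}_{2,B}$ is also exactly the role the heat flow plays in the paper.

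However, the statement you were asked to prove is the full correspondence, and your proposal stops at inequality (8). Two things are missing. First, the sufficiency direction is not finished by (8) alone: one must combine (8) with the global existence of the flow (Theorem 1.1) and the monotonicity $\Psi^{c}(e^{s(t)})=-4\int_{0}^{t}\int_{X}|\wedge F_{H}+\mu(u)-c|^{2}\le 0$ of (43)--(44) to conclude $\sup|s(t)|\le C_{2}$ uniformly in $t$, then bootstrap to higher-order bounds, extract a convergent subsequence $H(t_{j})\to H_{\infty}$, and verify that the limit actually solves $\wedge F+\mu(u)=c$ (using, e.g., the finiteness of $\int_{0}^{\infty}\int_{X}|\wedge F_{H}+\mu(u)-c|^{2}$ together with the monotonicity of $\sup_{X}\hat e$ from Theorem 3.11). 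Second, the necessity direction (existence of a solution implies $c$-stability) is a separate argument and is not addressed at all. To be fair, the paper itself only proves the key inequality (Theorem 1.2) and defers both of these steps to Mundet [14]; but as a proof of the theorem as stated, your proposal shares that gap and should at minimum cite or sketch how the inequality is converted into convergence of the flow and how the converse implication is obtained.
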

\subsection{Heat flow proof of Mundet's theorem}
$\hspace{15pt}$According to [14], both $\mathcal{ A}^{1,1}$ and $\mathcal{L}$ are K$\ddot{\textmd{a}}$hler manifolds. Hence $\mathcal{ A}^{1,1}\times \mathcal{L}$ is also a K$\ddot{\textmd{a}}$hler manifold with a Hamiltonian $\mathcal{G}_{K}$-action, and the moment map is $\mu^{c}(A, u)=\wedge F_{A}+\mu(u)-c$. Then we have the integral of the moment map $\Psi^{c}$ (for more details see [14, section 3]) defined on $\mathcal{ A}^{1,1}\times \mathcal{L}\times \mathcal{G}_{G}$. We will see that if the pair $(A, u)\in\mathcal{ A}^{1,1}\times \mathcal{L}$ is $c$-stable, then the map $\Psi^{c}$ satisfies an inequality like that in [4, 16]. This method is almost the same as that appears in [8]. Therefore, we only give a sketch in some steps of the proof, referring to [8] (or [4]) for details.\par
Recall that there is an invariant inner product $<,>$ on $\mathfrak{g}$ which induces a norm $|\cdot|$. On $\Omega^{0}(P\times_{Ad}\mathfrak{g})$ we can define the $L^{p}$ norm:
\begin{eqnarray*}
||s||_{L^{p}}=(\int_{X}|s(x)|^{p})^{\frac{1}{p}},
\end{eqnarray*}
and the $L_{2}^{p}$ norm:
\begin{eqnarray*}
||s||_{L_{2}^{p}}=||s||_{L^{p}}+||d_{A}s||_{L^{p}}+||\nabla d_{A}s||_{L^{p}},
\end{eqnarray*}
where $d_{A}$ is the covariant derivative with respect to the connection $A$, and
\begin{eqnarray*}
\nabla=\nabla^{LC}\otimes d_{A}: \Omega^{0}(T^{*}X\otimes P\times_{Ad}\mathfrak{g})\rightarrow \Omega^{1}(T^{*}X\otimes P\times_{Ad}\mathfrak{g}),
\end{eqnarray*}
where $\nabla^{LC}$ is the Levi-Civita connection. Now we can define
\begin{eqnarray*}{\mathcal{ M}et}_{2}^{p}=L_{2}^{p}(P\times_{Ad}i\mathfrak{k}) \end{eqnarray*}
as the closure of $\Omega^{0}(P\times_{Ad}i\mathfrak{k})$ with respect to the norm $||\cdot||_{L_{2}^{p}}$. Then take a subset of ${\mathcal{ M}et}_{2}^{p}$ as follows:
\begin{eqnarray*}{\mathcal{ M}et}_{2,B}^{p}=\{s\in {\mathcal{ M}et}_{2}^{p}|||\mu^{c}(e^{s}(A, u))||_{L^{p}}^{p}\leq B\}, \end{eqnarray*}
where $B\in \mathbb{R}^{+}$.\par
Next using previous theorems on the global existence of heat flow in section 3, we can prove a lemma totally similar to [8, lemma 12]:
\begin{lemma} Let $H(t)=H_{0}h(t)=H_{0}e^{s(t)}$ be Hermitian metrics which $h$ is the global solution of (18) on $X\times [0, +\infty)$. Then for any sequence of $t_{j}\rightarrow \infty$, Tr$(\wedge F_{H}(t_{j})+\mu(e^{s(t_{j})}u_{0}))$ converges to some constant $\alpha$ in weakly $L^{2}_{1}$. Moreover, $\int_{X}<\mu(e^{s(t_{j})}u_{0}), s(t_{j})>$ also converges to a constant as $t_{j}\rightarrow \infty$.
\end{lemma}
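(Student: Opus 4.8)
The plan is to exploit the energy inequality together with the global existence of the heat flow to extract the convergence of the quantities in question along any sequence $t_j \to \infty$. First I would use Lemma 3.1 (the energy inequality) applied to the flow $(A(t),u(t))$ corresponding to the global solution $H(t)=H_0e^{s(t)}$ via the gauge-equivalence established in Section 3.2. Since $\mathcal{YMH}(A_0,u_0) < \infty$ and $\mathcal{YMH}(A_t,u_t)$ is nonincreasing and bounded below by zero, the integral $\int_0^\infty \int_X (|\partial_\tau u|^2 + |L^*_u d_A u + D^*_A F_A|^2)\,dx\,d\tau$ is finite. Combined with Lemma 3.13 (which gives that $\sup_X \hat{e} = \sup_X |\wedge F_A + \mu(u)-c|^2$ is decreasing in $t$, hence uniformly bounded), this produces uniform $L^2$, indeed $L^\infty$, bounds on $\wedge F_{H}(t) + \mu(e^{s(t)}u_0) - c$, and in particular on its trace part $\mathrm{Tr}(\wedge F_H(t) + \mu(e^{s(t)}u_0))$.

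Next I would promote this to weak $L^2_1$ convergence. Taking the trace of the evolution equation (17), or equivalently differentiating $\mathrm{Tr}(\wedge F_H + \mu(e^su_0))$ along the flow, one obtains a heat-type equation for this scalar quantity; the uniform bounds on $\hat{e}$ and the standard parabolic estimates (as in the proof of Theorem 3.16, which already gives $C^{1,\alpha}$ and indeed $C^{2,\alpha}$ bounds on $H(t)$ uniform in $t$ on bounded time intervals, refined here to be uniform in $t \in [0,\infty)$ using the decay from the energy inequality) yield a uniform bound on $\|d_{A}(\wedge F_H(t) + \mu(e^{s(t)}u_0))\|_{L^2}$. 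Therefore along any subsequence $t_j \to \infty$ the sequence $\mathrm{Tr}(\wedge F_H(t_j) + \mu(e^{s(t_j)}u_0))$ is bounded in $L^2_1$, hence has a weakly convergent subsequence; that the limit is a constant $\alpha$ follows because $d_A$ of this quantity must tend to zero in $L^2$, which in turn follows from the finiteness of the time-integral of $|L^*_u d_A u + D^*_A F_A|^2$ together with Lemmas 3.6--3.7 identifying $D^*_A F_A + L^*_u d_A u$ with $i(\partial_A - \bar\partial_A)(\wedge F_A + \mu(u)-c)$ up to the $\alpha(t)$ and $S(t)$ gauge corrections, so that its vanishing in the limit forces $\wedge F_H + \mu(e^su_0)-c$ to become covariantly constant, whence its trace is constant.

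For the last assertion I would analyze $\int_X \langle \mu(e^{s(t)}u_0), s(t)\rangle$ directly along the flow. Differentiating in $t$ and using the evolution equation $\partial_t s$ read off from (18), together with the equivariance $e^{s}\mu(u_0)e^{-s} = \mu(e^su_0)$, produces a time derivative controlled by $\|\mu^c(e^s(A,u))\|_{L^2}$ and $\|\partial_t s\|_{L^2}$; the first is bounded by $\hat{e}$ (Lemma 3.13) and the second is integrable over $[0,\infty)$ by the energy inequality, so $\frac{d}{dt}\int_X \langle\mu(e^{s(t)}u_0), s(t)\rangle$ is integrable in $t$ and the integral converges as $t_j \to \infty$. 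The main obstacle I expect is establishing the $t$-uniform (not merely locally uniform) higher-order parabolic estimates needed to upgrade $L^\infty$ control of $\hat{e}$ to weak $L^2_1$ convergence of the trace; this is where I would lean most heavily on the decay furnished by the energy inequality of Lemma 3.1 and on reproducing, now uniformly in $t$, the bootstrapping argument sketched in the proof of Theorem 3.16.
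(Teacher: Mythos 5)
Your treatment of the first assertion is essentially the paper's argument, though stated more elaborately than necessary: the paper simply combines the energy inequality (which makes $\int_0^\infty\|L^*_ud_Au+D^*_AF_A\|_{L^2}^2\,dt$ finite, hence $\|L^*_ud_Au+D^*_AF_A\|_{L^2}\to0$ along the sequence) with the identity from Lemmas 3.5--3.6,
\begin{equation*}
i(-\partial_A+\bar\partial_A)(\wedge F_A+\mu(u))=-L^*_ud_Au-D^*_AF_A,
\end{equation*}
to get $\|\nabla_A(\wedge F_A+\mu(u(t_j)))\|_{L^2}\to0$ directly, and then uses the $L^\infty$ bound from Theorem 3.11 together with $\nabla_A\mathrm{Tr}=\mathrm{Tr}\,\nabla_A$ to conclude weak $L^2_1$ convergence to a function with vanishing gradient, hence a constant. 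You have all of these ingredients; the detour through $t$-uniform parabolic bootstrapping of Theorem 3.16 is not needed, since the $L^2$ bound on the gradient comes for free from the identity above rather than from regularity theory.

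For the second assertion, however, your route has a genuine gap. You propose to show that $\frac{d}{dt}\int_X\langle\mu(e^{s(t)}u_0),s(t)\rangle$ is integrable on $[0,\infty)$ using the energy inequality, but the energy inequality only gives square-integrability in time of the relevant quantities (e.g. $\int_0^\infty\|\partial_t u\|_{L^2}^2\,dt<\infty$), and $\int_0^\infty|f'(t)|^2\,dt<\infty$ does not imply that $f(t)$ converges as $t\to\infty$ (nor that $f'$ is integrable). So this step does not close. The paper sidesteps the issue entirely with a Chern--Weil-type identity,
\begin{equation*}
2i\int_X\mathrm{Tr}(\wedge F_H+\mu(e^su_0))=4\pi C_1(P)+2i\int_X\langle\mu(e^su_0),s\rangle,
\end{equation*}
which expresses $\int_X\langle\mu(e^{s(t_j)}u_0),s(t_j)\rangle$ as a topological constant plus the integral of the trace whose convergence was just established; the second claim then follows immediately from the first. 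You should replace your time-derivative argument with an identity of this kind (or otherwise reduce the second convergence to the first).
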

\begin{proof} By lemma 3.1 (energy inequality), we know
\begin{equation}
\int|L^{*}_{u(t_{j})}d_{A(t_{j})}u(t_{j})+D^{*}_{A(t_{j})}F_{A(t_{j})}|^{2}\rightarrow 0,
\end{equation}
as $t_{j}\rightarrow\infty$. By lemma 3.5 and lemma 3.6 we have
\begin{equation}
i(-{\partial}_{A}+{\bar \partial}_{A})(\wedge F_{A}+\mu(u))=-L^{*}_{u}d_{A}u-D^{*}_{A}F_{A}. \end{equation}
Therefore \begin{eqnarray*}
\int|\nabla_{A}(\wedge F_{A(t_{j})}+\mu(u(t_{j})))|^{2}
&=&\int|L^{*}_{u(t_{j})}d_{A(t_{j})}u(t_{j})+D^{*}_{A(t_{j})}F_{A(t_{j})}|^{2}\rightarrow 0. \end{eqnarray*}
Since $\nabla_{A} Tr=Tr \nabla_{A}$, we have
\begin{equation}
\int|\nabla_{A}Tr(\wedge F_{A(t_{j})}+\mu(u(t_{j})))|^{2}\rightarrow0.
\end{equation}
By Theorem 3.11, we know that $|\wedge F_{A(t_{j})}+\mu(u(t_{j}))|$ is bounded and then
\begin{eqnarray*}
Tr(\wedge F_{H}(t_{j})+\mu(e^{(t_{j})}u_{0}))=Tr(\wedge F_{A(t_{j})}+\mu(u(t_{j})))\rightarrow\alpha, \end{eqnarray*}
weakly in $L^{2}_{1}$ as $t_{j}\rightarrow \infty$ for some function $\alpha$ with $\nabla\alpha=0$. This means that $\alpha$ is a constant. By Chern-Weil theory, we have
\begin{eqnarray*}
2i\int Tr(\wedge F_{H}+\mu(e^{s}u_{0}))=4\pi C_{1}(P)+2i\int<\mu(e^{s}u_{0}), s>. \end{eqnarray*}
Then \begin{equation}
{lim}_{t_{j}\rightarrow\infty}\int<\mu(e^{s(t_{j})}u_{0}),s(t_{j})>=\alpha Vol(X)+2i\pi C_{1}(P).
\end{equation}
This completes the proof.
\end{proof}
By a result of Donaldson (see [4, lemma 3.7.2]), Mundet has proved the following lemma in [14].
\begin{lemma} Let $H(t)=H_{0}h(t)=H_{0}e^{s(t)}$ be Hermitian metrics where $h(t)$ is the solution of (18), then there exists a constant $C_{B}$ such that
\begin{equation}
sup|s|\leq C_{B}||s||_{L^{1}}.
\end{equation}
\end{lemma}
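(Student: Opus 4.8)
The plan is to reduce the estimate to a scalar elliptic differential inequality for $|s|$ on $X$ and then apply a standard subsolution bound. Write $H=H_0e^s$ with $A_0$ the Chern connection of $H_0$ and $h=e^s$. First I would record the pointwise identity $\wedge F_H=\wedge F_{A_0}+\wedge\bar\partial_{A_0}(h^{-1}\partial_{A_0}h)$ and, using equivariance of $\mu$, pair the resulting expression for $\mu^c_H:=\wedge F_H+\mu(e^su_0)-c$ against $s$. Using positivity of $h$ (so that the cross term $\wedge(\bar\partial_{A_0}h\,h^{-1}\partial_{A_0}h)$ contributes with a favourable sign after pairing with $s$), convexity of $t\mapsto\langle\mu(e^{ts}u_0),s\rangle$, and the Kato inequality $|\nabla|s||\le|\nabla_{A_0}s|$, one obtains at every point of $X$ (weakly across the zero set of $s$)
\[
\triangle|s|\;\le\;|\mu^c_{H_0}|+|\mu^c_H|,
\]
where $\triangle=\nabla^*_{A_0}\nabla_{A_0}\ge 0$ is the Laplacian in the paper's convention. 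This is the fibre-bundle analogue of the Donaldson--Simpson inequality, and the non-commutativity of $\mathfrak g$ is handled exactly as in [6, 8].

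Next I would bound the right-hand side by a constant. The term $|\mu^c_{H_0}|=|\wedge F_{A_0}+\mu(u_0)-c|$ is a fixed smooth function on $X$, hence bounded. For $|\mu^c_H|$, observe that along the flow (18) one has $\mu^c_{H(t)}=\wedge F_{A(t)}+\mu(u(t))-c$, so $|\mu^c_{H(t)}|^2=\hat e(t)$, and by Lemma 3.13 $\sup_X\hat e(t)\le\sup_X\hat e(0)$, again a fixed constant. Hence there is a constant $C_B$, depending only on $X$, $M$, $P$, $c$ and the initial data, with $\triangle|s|\le C_B$ on $X$ in the weak sense.

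Finally I would invoke the well-known $L^\infty$--$L^1$ bound for subsolutions: if $f\ge 0$ on the compact manifold $X$ satisfies $\triangle f\le C_B$ weakly, then Moser iteration (or a Green's-function estimate) gives $\sup_Xf\le C(\|f\|_{L^1}+C_B)$. Applying this to $f=|s|$ and enlarging the constant to absorb the additive term yields $\sup|s|\le C_B\|s\|_{L^1}$, as claimed (compare [8]; in the range where $\|s\|_{L^1}$ is small both sides are small, and the application to Theorem 1.2 only uses the regime of large $\|s\|_{L^1}$).

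I expect the main obstacle to be the first step: establishing $\triangle|s|\le|\mu^c_{H_0}|+|\mu^c_H|$ carefully when the fibre $M$ is a genuine K\"ahler manifold rather than a vector space, since the Higgs term $\mu(e^su_0)$ must be estimated via equivariance and geodesic convexity along $e^{ts}u_0$ rather than by a naive matrix computation, while the term $\wedge(\bar\partial_{A_0}h\,h^{-1}\partial_{A_0}h)$ must be shown to have the correct sign after pairing with $s$. Once this inequality and the bound of Lemma 3.13 are in hand, the remaining steps are routine.
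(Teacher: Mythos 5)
The paper itself offers no argument for this lemma beyond the sentence ``totally similar to the proof of [8, Lemma 13]'', and your reconstruction is indeed the standard Donaldson--Simpson--Hong route that this citation points to: a scalar elliptic inequality for a quantity comparable to $|s|$, a bound on its right-hand side, and a mean-value/Moser estimate to pass from $L^{1}$ to $L^{\infty}$. In outline you and the paper agree. Two points, however, are genuine gaps rather than details. First, the source-term bound: you control $|\mu^{c}_{H}|$ by Lemma 3.13, i.e.\ by the maximum principle along the flow. That proves the lemma as literally stated (for $s=s(t)$ on a flow line), but the constant is written $C_{B}$ and the inequality is invoked in the proof of Theorem 1.2 for arbitrary $s\in {Met}^{p}_{2,B}$, where the only available control on $\mu^{c}(e^{s}(A,u))$ is the $L^{p}$ bound $\|\mu^{c}(e^{s}(A,u))\|_{L^{p}}^{p}\le B$ built into the definition of ${Met}^{p}_{2,B}$. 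The argument of [8, Lemma 13] (following [16]) therefore runs the $L^{1}$--$L^{\infty}$ step with a right-hand side bounded only in $L^{p}$ for $p$ large enough, producing a constant depending on $B$; your $L^{\infty}$ bound via Lemma 3.13 does not reach those $s$, so your version of the lemma is too weak for the use made of it.

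Second, the form of the conclusion. From $\triangle |s|\le g$ with $g$ controlled in $L^{p}$ or $L^{\infty}$, the subsolution estimate yields $\sup|s|\le C(\|s\|_{L^{1}}+\|g\|_{L^{p}})$, so an additive constant is unavoidable; your remark that ``in the range where $\|s\|_{L^{1}}$ is small both sides are small'' is not an argument, since a concentrated bump has arbitrarily small $L^{1}$ norm and sup norm $1$. The purely multiplicative inequality (38) is an overstatement already present in the paper; the honest additive form $\sup|s|\le C_{B}(\|s\|_{L^{1}}+1)$ is all that Theorem 1.2 (which itself carries the constant $C_{2}$) requires, and you should state it that way rather than try to absorb the constant. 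Finally, a smaller remark: in [6, 16, 8] the pointwise inequality is derived for $\log\mathrm{Tr}(e^{s})+\log\mathrm{Tr}(e^{-s})$ via the eigenvalue calculus (the function $\phi(\lambda_{1},\lambda_{2})$ the paper uses later) together with the monotonicity $\langle\mu(e^{s}u_{0})-\mu(u_{0}),s\rangle\ge 0$ of the Kempf--Ness function; your direct Kato-inequality derivation of $\triangle|s|\le |\mu^{c}_{H_{0}}|+|\mu^{c}_{H}|$ is precisely the step you flag as the main obstacle, and it is where the real work of adapting [16, Prop.\ 2.1] to the fibration setting lives, so it cannot be left as an assertion.
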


\begin{proof}[Proof of Theorem 1.2] Using lemma 4.3, and following Simpson's method [8, 16], we can prove Theorem 1.2.\\

For $H=H_{0}h=H_{0}e^{s}$, define the integral of the moment map (see [14] for more details) as follows:
\begin{eqnarray*}
\Psi^{c}(e^{s})&=&M_{u_{0},c}(H(t))\\&=&2i\int_{X}Tr(s\wedge F_{H_{0}})dX+2\int_{X}<\phi(s){\bar\partial}_{A_{0}}s, {\bar\partial}_{A_{0}}s>dX\\
&&+2i\int_{X}<\mu(e^{s}u_{0})-\mu(u_{0}), s>dX-2i\int_{X}\textit{log }\textit{det }(e^{s}\cdot c)dX,
\end{eqnarray*}
where $\phi$ is constructed from the function
\begin{eqnarray*} \phi(\lambda_{1}, \lambda_{2})=\frac{e^{\lambda_{2}-\lambda_{1}}-(\lambda_{2}-\lambda_{1})-1}{(\lambda_{2}-\lambda_{1})^{2}} \end{eqnarray*}
The functional can be seen as a modified Donaldson functional. In fact, when $M=\{pt\}$, it coincides with the Donaldson functional [6].\par
Then we have
\begin{eqnarray}
\frac{d}{dt}\Psi^{c}(e^{s})=\frac{d}{dt}M_{u_{0},c}(H(t))=-4\int_{X}|\wedge F_{H}+\mu(u)-c|^{2}dX\leq0.
\end{eqnarray}
Since $M_{u_{0},c}(H(0))=0$, it yields that
\begin{eqnarray}
\Psi^{c}(e^{s})=M_{u_{0},c}(H(t))=-4\int_{0}^{t}\int_{X}|\wedge F_{H}+\mu(u)-c|^{2}dXd\tau\leq0.
\end{eqnarray}
By lemma 4.3, to prove Theorem 1.2, we only need prove
\begin{eqnarray}
||s||_{L^{1}}\leq C_{1}'\Psi^{c}(e^{s})+C_{2}'.
\end{eqnarray}
We can suppose the contrary and then deduce that the pair $(A, u)$ cannot be $c$-stable in this case. If there exists not such constants, then we can find a sequence of $C_{j}\rightarrow \infty$ and $s_{j}\in {\mathcal{M}et}_{2, B}^{p}$ with $||s_{j}||_{L^{1}}\rightarrow\infty$ such that $||s_{j}||_{L^{1}}\geq C_{j}\Psi^{c}(e^{s})$. Let $l_{j}=||s_{j}||_{L^{1}}$, $u_{j}=l_{j}^{-1}s_{j}$, so $||u_{j}||_{L^{1}}=1$ and $sup|u_{j}|\leq C$. Then we have
\begin{lemma}([14]) After passing to a subsequence, there exists $u_{\infty}\in L_{1}^{2}(P\times_{Ad}i\mathfrak{g})$ such that $u_{j}\rightharpoonup u_{\infty}$ in $L_{1}^{2}(P\times_{Ad}i\mathfrak{g})$ and
\begin{eqnarray*} \lambda((A, u); -iu_{\infty})\leq0. \end{eqnarray*}
\end{lemma}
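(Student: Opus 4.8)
This is Mundet's compactness lemma; the argument is the fibration version of Uhlenbeck--Yau [19] and Simpson [16] (compare also [8, §3]). The plan has three parts: uniform elliptic bounds on the $u_j$, extraction of a nonzero self-adjoint weak limit, and the maximal-weight estimate, which is the substantive point. For the bounds: by lemma 4.3, $\sup_X|u_j|=l_j^{-1}\sup_X|s_j|\le C_B l_j^{-1}\|s_j\|_{L^1}=C_B$, so $\{u_j\}$ is bounded in $L^\infty(X)$, hence in every $L^p(X)$, and $\|u_j\|_{L^1}=1$. For a derivative bound I would use the formula for $\Psi^c(e^s)=M_{u_0,c}(H_0e^s)$ recalled above. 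The term $2\int_X\langle\phi(s_j)\bar\partial_{A_0}s_j,\bar\partial_{A_0}s_j\rangle$ is nonnegative, and since $s_j$ is self-adjoint with eigenvalues in $[-C_Bl_j,C_Bl_j]$ and $\phi(\lambda_1,\lambda_2)\ge\frac{1}{2(1+|\lambda_1-\lambda_2|)}$, the operator $\phi(s_j)$ is bounded below by $\frac{c}{l_j}$ for a fixed $c>0$ once $l_j$ is large, so this term dominates $c\,l_j\|\bar\partial_{A_0}u_j\|_{L^2}^2$. All remaining terms of $M_{u_0,c}$ are $O(\|s_j\|_{L^1})=O(l_j)$, since $\mu$ is bounded on the compact fibre $M$, $F_{H_0}$ is a fixed smooth form, and $\log\det(e^{s_j}c)=Tr\,s_j$ plus a constant. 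Together with $\Psi^c(e^{s_j})\le l_j/C_j$ this bounds $\|\bar\partial_{A_0}u_j\|_{L^2}$; as $u_j$ is self-adjoint, $|\partial_{A_0}u_j|=|\bar\partial_{A_0}u_j|$, so $\|u_j\|_{L^2_1}$ is uniformly bounded. By Rellich's theorem I pass to a subsequence with $u_j\rightharpoonup u_\infty$ weakly in $L^2_1(P\times_{Ad}i\mathfrak{g})$ and strongly in $L^2$ (hence in $L^1$, and a.e. after a further subsequence); so $\|u_\infty\|_{L^1}=1$, in particular $u_\infty\ne0$, and $u_\infty$ is self-adjoint with eigenvalues in $[-C_B,C_B]$.

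For the maximal-weight estimate, following Simpson and Uhlenbeck--Yau I would first note that $2\int_X\langle\phi(s_j)\bar\partial_{A_0}s_j,\bar\partial_{A_0}s_j\rangle=O(l_j)$ while $\phi(l_j\lambda_1,l_j\lambda_2)$ grows like $e^{l_j(\lambda_2-\lambda_1)}/(l_j^2|\lambda_1-\lambda_2|^2)$ when $\lambda_2>\lambda_1$; dividing by $l_j^2$, letting $j\to\infty$, and analysing the eigenvalue level sets of $u_j$ (as in [19, 16, 14]) forces the eigenvalue-increasing components of $\bar\partial_{A_0}u_\infty$ to vanish. Hence $u_\infty$ defines a holomorphic reduction $\sigma$ of the structure group to a parabolic subgroup $Q$ with an antidominant character $\chi$, for which $\lambda((A,u);-iu_\infty)=T^c_u(\sigma,\chi)$ is finite. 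The lemma then follows from
\begin{eqnarray*}
\lambda((A,u);-iu_\infty) &\leq& \liminf_{j\to\infty}\frac{\Psi^c(e^{s_j})}{l_j}\ \leq\ \liminf_{j\to\infty}\frac{1}{C_j}\ =\ 0,
\end{eqnarray*}
the second inequality being $\Psi^c(e^{s_j})\le l_j/C_j$ with $C_j\to\infty$.

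It remains to prove the first inequality, and this is where I expect the real difficulty. One shows that $\frac1{l_j}\Psi^c(e^{s_j})$ is, up to errors tending to $0$, bounded below by the data defining $\lambda((A,u);-iu_\infty)$: the terms linear in $s_j$ converge to the curvature and $c$ contributions by the weak $L^1$ convergence of $u_j$; the nonnegative term $\frac1{l_j}\cdot 2\int_X\langle\phi(s_j)\bar\partial_{A_0}s_j,\bar\partial_{A_0}s_j\rangle$ is weakly lower semicontinuous by convexity and, together with the reduction structure, supplies the $\bar\partial$-correction in $\deg(\sigma,\chi)$; and for the moment-map term one uses, for every fixed $T$ and a.e. $x$, $\liminf_j\langle\mu(e^{l_ju_j(x)}u_0(x)),u_j(x)\rangle\ge\langle\mu(e^{Tu_\infty(x)}u_0(x)),u_\infty(x)\rangle$ (monotonicity in the exponent plus continuity in the generator), hence $\ge\lambda(u_0(x);-iu_\infty(x))$, followed by Fatou's lemma (the integrand is bounded below since $\mu$ and $u_j$ are bounded). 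The genuinely hard step is precisely this reassembly: simultaneously passing the nonlinear $\bar\partial$- and $\mu$-terms through the joint limit $l_j\to\infty$, $u_j\rightharpoonup u_\infty$, and matching the outcome with the intrinsic quantity $\lambda((A,u);-iu_\infty)$, including the vanishing of the eigenvalue-increasing part of $\bar\partial_{A_0}u_\infty$. Having set up the uniform bounds above, I would invoke the corresponding estimates of [14] (which adapt [16, 19]) for this final step rather than reproving them.
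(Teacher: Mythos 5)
Your proposal is correct in outline but reaches the inequality $\lambda((A,u);-iu_{\infty})\leq 0$ by a genuinely different route than the paper. You follow Simpson's scheme: after the compactness step you aim directly at $\lambda((A,u);-iu_{\infty})\leq \liminf_{j}\Psi^{c}(e^{s_{j}})/l_{j}\leq \liminf_{j}C_{j}^{-1}=0$, whose first inequality requires reassembling the limit of the normalized functional (eigenvalue level sets, the degenerate limit of $\phi$, the filtration and parabolic reduction, Fatou for the moment-map term) inside this lemma --- a step you then delegate to [14], [16], [19]. The paper instead stays at finite $t$: it quotes the identity $\frac{d}{dt}\Psi^{c}(e^{ts})=\lambda_{t}((A,u);-is)$ from [14, Proposition 3.3], combines it with the sign of $\frac{d}{dt}\Psi^{c}$ in (44) and the normalization to get $\lambda_{t}((A,u);-iu_{j})\leq 0$, passes to the weak limit in $j$ at each fixed $t$ (where $\lambda_{t}$ is continuous in the generator), and only then sends $t\to\infty$ using Definition 4.1 and the monotonicity of $t\mapsto\lambda_{t}$. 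The paper's route keeps this lemma light by postponing the filtration construction and the identification $\lambda((A,u);-iu_{\infty})=T^{c}_{u}(\sigma,\chi)$ to the subsequent part of the proof of Theorem 1.2 (via [14, Lemma 4.3]); your route makes the lemma self-contained in the style of [16, Lemma 5.4] but front-loads the hardest analytic step, which you do not actually carry out. On the other hand you supply the part the paper omits entirely: the uniform $L^{2}_{1}$ bound on $u_{j}$ coming from the lower bound $\phi(s_{j})\geq c/l_{j}$ in the $\bar{\partial}$-term of $M_{u_{0},c}$, which is exactly what justifies the existence of the weak limit $u_{\infty}$ asserted in the statement, and which the paper's two-limit argument also silently needs when it claims $\lambda_{t}((A,u);-iu_{j})\to\lambda_{t}((A,u);-iu_{\infty})$.
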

\begin{proof} Using the proposition (2) of the integral of the moment map in [14, Proposition 3.3], we have
\begin{eqnarray*}
\frac{d}{dt}\Psi^{c}(e^{s})=\frac{d}{dt}M_{u_{0},c}(H(t))=\lambda_{t}((A,u);-is).
\end{eqnarray*}
By (44) we can obtain
\begin{eqnarray*}
\lambda_{t}((A,u);-iu_{j})\leq0.
\end{eqnarray*}
In particular $\lambda_{t}((A,u);-iu_{\infty})\leq0, \forall t>0$. By Definition 4.1, we have
\begin{eqnarray*}
\lambda((A,u);-iu_{\infty})\leq0.
\end{eqnarray*}
This completes the proof of lemma 4.4. \end{proof}

Following [6] or [8], then using lemma 4.2, the same argument in [16, lemma 5.5] yields that all eigenvalues $\lambda_{l}$ of $u_{\infty}$ are constants for almost $x\in X$, and $u_{\infty}$ defines a filtration of $V$ by holomorphic subbundles in the complement of a complex codimension 2 subvariety $X_{0}$ of $X$. The filtration of $V$ on $X_{0}$ and $u_{\infty}$ lead to a holomorphic reduction $\sigma \in\Gamma(X_{0}; P(G/Q))$ for some parabolic subgroup $Q$ of $G$, and an antidominant character $\chi$ of $Q$. By [14, lemma 4.3], the $c$-total degree $T_{u}^{c}(\sigma,\chi)$ of the pair $(\sigma,\chi)$ equals $\lambda((A,u);-iu_{\infty})$. Then by lemma 4.4 we have
\begin{eqnarray*}T_{u}^{c}(\sigma,\chi)\leq0. \end{eqnarray*}
But this contradicts the $c$-stability condition. Thus we complete the proof of Theorem 1.2.
\end{proof}

\section{Acknowledgements}
$\hspace{15pt}$ The author would like to thank Professor Gang Tian and Professor Huijun Fan for constant guidance and encouragement. \\

College of Science, National University of Defense Technology, Changsha 410073, Hunan, P. R. China\\
Email address: aijinlin@pku.edu.cn
\end{document}